\newtheorem{prethm}{{\bf Theorem}}[section]
\newenvironment{thm}{\begin{prethm}{\hspace{-0.5
em}{\bf.}}}{\end{prethm}}
\newtheorem{prepro}{{\bf Theorem}}
\newtheorem{precor}[prethm]{{\bf Corollary}}
\newenvironment{cor}{\begin{precor}{\hspace{-0.5
em}{\bf.}}}{\end{precor}}
\newtheorem{preconj}[prethm]{{\bf Conjecture}}
\newenvironment{conj}{\begin{preconj}{\hspace{-0.5
em}{\bf.}}}{\end{preconj}}
\newtheorem{preremark}[prethm]{{\bf Remark}}
\newtheorem{prelem}[prethm]{{\bf Lemma}}
\newenvironment{lem}{\begin{prelem}{\hspace{-0.5
em}{\bf.}}}{\end{prelem}}
\newtheorem{preque}[prethm]{{\bf Question}}
\newtheorem{preobserv}[prethm]{{\bf Observation}}
\newenvironment{observ}{\begin{preobserv}{\hspace{-0.5
em}{\bf.}}}{\end{preobserv}}
\newtheorem{preproposition}[prethm]{{\bf Proposition}}
\newtheorem{preproof}{{\bf Proof.}}
\newtheorem{preprooff}{{\bf Proof}}
\newenvironment{proof}[1]{\begin{preproof}{\rm
#1}\hfill{$\Box$}}{\end{preproof}}
\newtheorem{preproofs}{{\bf The second proof of }}
\newenvironment{proofs}[1]{\begin{preproofs}{\rm
#1}\hfill{$\Box$}}{\end{preproofs}}
\newtheorem{preprooft}{{\bf The third proof of }}
\newtheorem{preproofF}{{\bf Proof of}}
\title{\bf\Large 
Spanning tree-connected subgraphs with small degrees 
}
\author{
{\normalsize{\sc Morteza Hasanvand${}$}}
\vspace{3mm}
\\{\footnotesize{${}$\it Department of Mathematical Sciences, Sharif University of Technology, Tehran, Iran}}
{\footnotesize{}}
\\{\footnotesize{ $\mathsf{morteza.hasanvand@alum.sharif.edu }$ }}
\\{\footnotesize{${}$\it Dedicated to Hisako Tsunoda on the occasion of her $67$th birthday}}
}
\date{}
\begin{document}
\maketitle
\begin{abstract}{Let $G$ be a graph with a spanning subgraph $F$, let $m$ be a positive integer, and let $f$ be a positive integer-valued function on $V(G)$. In this paper, we show that if for all $S\subseteq V(G)$, $$\Omega_m(G\setminus S)\le \sum_{v\in S}\big(f(v)-2m\big)+m+\Omega_m(G[S]),$$ then $G$ has a spanning $m$-tree-connected subgraph $H$ containing $F$ such that for each vertex $ v$, $d_H(v)\le f(v)+\max\{0,d_F(v)-m\}$, where $G[S]$ denotes the induced subgraph of $G$ with the vertex set $S$ and $\Omega_m(G_0)$ is a parameter to measure $m$-tree-connectivity of a given graph $G_0$. 

By applying this result, we show that every $k$-edge-connected graph $G$ with $k\ge 2m$ has a spanning $m$-tree-connected subgraph $H$ such that $d_H(v)\le \big\lceil \frac{m}{k}(d_G(v)-2m)\big\rceil+2m$ for each $v\in V(H)$; moreover, if $G$ is $k$-tree-connected and $k\ge m$, then $G$ has a spanning $m$-tree-connected subgraph $H$ such that $d_H(v)\le \big\lceil \frac{m}{k}(d_G(v)-m)\big\rceil+m$ for each $v\in V(H)$. As a consequence, we conclude that every $(r-2m)$-edge-connected graph with $r\ge 4m$ admits a spanning $m$-tree-connected subgraph with maximum degree at most $3m$.

Next, we prove that a graph $G$ admits a spanning $m$-tree-connected subgraph $H$ satisfying $\Delta(H) \le 2m+1$, if for all $S\subseteq V(G)$, $$ \omega(G\setminus S)+\small {\frac{m+1}{2}}\, iso(G\setminus S) \le \frac{1}{m}|S|+1,$$ where $\omega(G\setminus S)$ and $iso(G\setminus S)$ denote the number of components and the number of isolated vertices of $G\setminus S$, respectively. As a consequence, we conclude that every $m(n-1)$-connected $K_{1, n}$-free simple graph with a sufficiently large minimum degree and $n\ge 3$ admits a spanning $m$-tree-connected subgraph with maximum degree at most $2m+1$.
\\
\\
\noindent {\small {\it Keywords}:
\\
Spanning tree;
connected factor;
toughness;
tree-connected graph;
strongly tough graph.
}} {\small
}
\end{abstract}
%
%
%
%
%
%
%
%
%
%
%
%
%
%
\section{Introduction}
In this article, graphs have no loops, but multiple edges are allowed and a simple graph is a graph without multiple edges.
 Let $G$ be a graph. 
The vertex set, the edge set, the minimum degree, the maximum degree, and the number of components of $G$ are denoted by $V(G)$, $E(G)$, $\delta(G)$, $\Delta(G)$, and $\omega(G)$, respectively. 
We also denote by $iso(G)$ (resp. $I(G)$) the number (resp. the set) of isolated vertices of $G$.
For a set $X\subseteq V(G)$, we denote by $G[X]$ the induced subgraph of $G$ with the vertex set $X$ containing
precisely those edges 
of $G$ whose ends lie in~$X$.
The degree $d_G(v)$ of a vertex $v$ is the number of edges of $G$ incident to $v$.
We also denote by $d_G(X)$ the number of edges of $G$ with exactly one end in $X$.
For a spanning subgraph $H$ with a given integer-valued function $h$ on $V(H)$,
the {\bf total excess of $H$ from $h$} is defined as follows:
$$te(H, h)=\sum_{v\in V(H)}\max\{0,d_H(v)-h(v)\}.$$
According to this definition, $te(H,h)=0$ if and only if for each vertex $v$, $ d_H(v)\le h(v)$.
A spanning subgraph of a graph $G$ is called a {\bf factor}.
For a set $A$ of integers, an {\bf $A$-factor} is a spanning subgraph with vertex degrees in $A$. 
Let $F$ be a factor of $G$.
For an edge set $E$, we denote by $F-E$ the graph obtained from $F$ by removing the edges of $E$ from $F$.
Likewise, we denote by $F+E$ the graph obtained from $F$ by inserting the edges of $E$ into $F$.
For convenience, we use $e$ instead of $E$ when $E=\{e\}$.
For two edge sets $E_1$ and $E_2$, we also use the notation $E_1+E_2$ for the union of them.
A component of $F$ is said to be {\bf trivial}, if it consists of only one vertex.
 Likewise, $F$ is said to be {\bf trivial}, if it has no edge. 
A vertex set $S$ of a graph $G$ is called {\bf independent}, if there is no edge of $G$ connecting vertices in $S$.
Let $S\subseteq V(G)$. 
 The graph obtained from $G$ by removing all vertices of $S$ is denoted by $G\setminus S$.
Denote by $e_G(S)$ the number of edges of $G$ with both ends in $S$. 
Let $P$ be a partition of $V(G)$.
Denote by $e_G(P)$ the number of edges of $G$ whose ends lie in different parts of $P$.
The graph obtained from $G$ by contracting all vertex sets of $P$ is denoted by $G/P$.
A graph $G$ is called {\bf$m$-tree-connected}, if it has $m$ edge-disjoint spanning trees. 
(Note that the trivial graph having one vertex and no edge is also $m$-tree-connected).
In addition, an $m$-tree-connected graph $G$ is called {\bf minimally $m$-tree-connected},
if for any edge $e$ of $G$, the graph $G- e$ is not $m$-tree-connected.
Hence, an $m$-tree-connected graph is minimally $m$-tree-connected if and only if $|E(G)|=m(|V(G)|-1)$. 
(We should note that somewhere we consider a minimal $m$-tree-connected subgraph of $G$ that contains a fixed vertex subset $X$ of $V(G)$; in this case, our minimality is with respect to inclusion and the desired subgraph is not necessarily minimally $m$-tree-connected).
The vertex set of any graph $G$ can be expressed uniquely as a disjoint union of vertex sets of maximal $m$-tree-connected subgraphs.
(More precisely, we can define the relation on $V (G)$ as follows: $x\sim y$ if $x$ and $y$ are contained in an $m$-tree-connected subgraph of $G$. This is an equivalence relation which partitions $V(G)$ into equivalence classes; see Observation~\ref{observ:definition:tree-connected}). These subgraphs are called the {\bf $m$-tree-connected components} of $G$.
For a graph $G$, we define the parameter $\Omega_m(G)=m|P|-e_G(P)$ to measure tree-connectivity, where $P$ is the unique partition of $V(G)$ obtained from the $m$-tree-connected components of $G$.
Note that $\Omega_1(G)$ is the same number of components of $G$, while $\frac{1}{m}\Omega_m(G)$ is less than or equal to the number of $m$-tree-connected components of $G$.
The definition implies that
the null 
graph $K_0$ with no vertices is not $m$-tree-connected and $\Omega_m(K_0)=0$.
(Note that every nonnull graph $G$ satisfies $\Omega_m(G)\ge m$ and the equality holds if and only if $G$ is $m$-tree-connected; see Theorem~\ref{thm:maximum-partition}).
In this paper, we assume that all graphs are nonnull, except for the graphs that are obtained by removing vertices.
We say that a graph $G$ is {\bf $m^+$-tree-connected}, if it is non-trivial and $G-e$ remains $m$-tree-connected for every edge $e$. 
A graph $F$ is {\bf $m$-sparse}, if $e_F(S)\le m|S|-m$ for all nonempty subsets $S\subseteq V(F)$. Clearly, $1$-sparse graphs are forests.
It is not hard to show that a graph $F$ is $m$-sparse if and only if all $m$-tree-connected components of $F$ are minimally $m$-tree-connected; see Corollary~\ref{cor:spare-component}.
We will show that every $m$-tree-connected graph $H$ with the minimum number of edges containing a given $m$-sparse subgraph $F$ 
 is also minimally $m$-tree-connected.
Let $t$ be a positive real number.
A graph $G$ is said to be {\bf $t$-tough}, if $\omega(G\setminus S)\le \max\{1,\frac{1}{t}|S|\}$ for all $S\subseteq V(G)$. 
Likewise, $G$ is said to be {\bf $m$-strongly $t$-tough},
 if $\frac{1}{m}\Omega_m(G\setminus S)\le \max\{1,\frac{1}{t}|S|\}$ for all $S\subseteq V(G)$.
We will show that every $m$-strongly $t$-tough graph must be $t$-tough; see Theorem~\ref{thm:comparison}. 
In addition, we show that tough enough graphs with sufficiently large
order are also $m$-strongly tough enough; see Corollary~\ref{cor:tough-enough}.
We say that a simple graph $G$ is {\bf $K_{1,f}$-free}, if $G$ does not have an induced star of size $f(v)$ with center $v$ for all $v\in V(G)$, where $f$ is a positive integer-valued function on $V(G)$.
Throughout this article, all variables $k$ and $m$ are positive integers.
%
%
%
%
%
%
%

Recently, the present author~\cite{ ClosedWalks} investigated spanning trees with small degrees and established
the following theorem. This result is an improvement of several results due to Win (1989)~\cite{Win-1989}, Ellingham and Zha~(2000)~\cite{Ellingham-Zha-2000}, and Ellingham, Nam, and Voss~(2002)~\cite{Ellingham-Nam-Voss-2002}.
\begin{thm}{\rm(\cite{ClosedWalks})}\label{thm:c=2:G[S]}
{Let $G$ be a graph with a factor $F$. 
Let $f$ be a positive integer-valued function on $V(G)$. If for all $S\subseteq V(G)$, 
$$\omega(G\setminus S)\le \sum_{v\in S}\big(f(v)-2\big)+1+\omega(G[S]),$$ 
then $G$ has a connected factor $H$ containing $F$ such that for each vertex $v$,
$d_H(v)\le f(v)+\max\{0, d_F(v)-1\}$.
}\end{thm}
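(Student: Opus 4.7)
The plan is to argue by contradiction through a doubly-extremal choice of connected factor. I would define $g(v) := f(v) + \max\{0, d_F(v) - 1\}$ so that the goal becomes producing a connected factor $H$ of $G$ with $F \subseteq H$ and $te(H, g) = 0$. Among all connected factors of $G$ containing $F$, I would pick $H$ that first minimizes $te(H, g)$ and, subject to that, minimizes $|E(H)|$. Suppose for contradiction that $te(H, g) \geq 1$, and fix $v^* \in V(G)$ with $d_H(v^*) > g(v^*)$.

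The first step is to show that every non-$F$ edge of $H$ at $v^*$ is a bridge. A short case analysis on whether $d_F(v^*) = 0$ or $d_F(v^*) \geq 1$ gives $d_H(v^*) - d_F(v^*) \geq f(v^*) \geq 1$, so at least one non-$F$ edge $e = v^* u$ of $H$ is incident to $v^*$; if $e$ were not a bridge of $H$, then $H - e$ would still be a connected factor containing $F$, with strictly fewer edges and with $te(H - e, g) \leq te(H, g) - 1$ since the excess at $v^*$ strictly drops and no new excess can appear, contradicting the extremal choice. Hence, writing $E^* := E_H(v^*) \setminus E_F(v^*) = \{v^* u_1, \ldots, v^* u_k\}$ with $k \geq f(v^*)$, the graph $H - E^*$ decomposes into exactly $k + 1$ components: a component $C_0$ containing $v^*$ together with its $F$-neighborhood, and $k$ components $U_1, \ldots, U_k$ on the far sides of the bridges, with $u_i \in V(U_i)$.

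Next I would use a swap argument to restrict $G$-edges between the pieces. For any edge $e' = xy$ of $G \setminus H$ with $x \in V(U_i)$ and $y \notin V(U_i)$, the subgraph $H + e' - v^* u_i$ is again a connected factor containing $F$, and its excess at $v^*$ drops by one; so if both $d_H(x) \leq g(x) - 1$ and $d_H(y) \leq g(y) - 1$, no new excess is produced at $x$ or $y$, giving a factor with strictly smaller total excess, a contradiction. Therefore every cross-edge out of $U_i$ has a \emph{tight} endpoint $z$ with $d_H(z) \geq g(z)$. From this I would build, by iteration starting from $\{v^*\}$, a set $S$ that absorbs $v^*$ together with the tight endpoints blocking useful swaps, until each residual $U_i$ (after removing the tight vertices adjoined to $S$) is a genuine component of $G \setminus S$.

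The final step is a counting argument to violate the hypothesis. The cleanest case already illustrates the idea: when no cross-edges exist, taking $S = \{v^*\}$ gives $\omega(G \setminus v^*) \geq k + q$, where $q$ is the number of components of $C_0 - v^*$; since $k \geq f(v^*)+1$ when $d_F(v^*) = 0$ and $k + q \geq f(v^*) + 1$ when $d_F(v^*) \geq 1$ (using $q \geq 1$ in that case), we obtain $\omega(G \setminus v^*) \geq f(v^*) + 1 = (f(v^*)-2)+2+\omega(G[\{v^*\}])$, violating the hypothesis. The main obstacle, as expected for Win-type arguments, is the general case where cross-edges force us to enlarge $S$: one must show that each absorbed tight vertex $w$ contributes enough extra components of $G \setminus S$ to keep $\omega(G \setminus S)$ ahead of $\sum_{v \in S}(f(v) - 2) + 2 + \omega(G[S])$. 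Managing the $\omega(G[S])$ correction term (which exactly accounts for the $G$-edges internal to $S$) in parallel with the $\max\{0, d_F(v) - 1\}$ slack built into $g$ is the technical heart of this bookkeeping.
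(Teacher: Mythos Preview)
Your outline is a plausible direct attack, but it leaves the genuinely hard part undone: the iterative enlargement of $S$ and the bookkeeping that keeps $\omega(G\setminus S)$ ahead of $\sum_{v\in S}(f(v)-2)+2+\omega(G[S])$ is only gestured at. That step is exactly where such arguments live or die, and your proposal does not show how the $\omega(G[S])$ correction and the $\max\{0,d_F(v)-1\}$ slack interact when a tight vertex $w$ is absorbed. So as it stands this is a plan for a proof, not a proof.

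The paper (this theorem is the case $m=1$ of Theorem~\ref{thm:sufficient}) takes a different and cleaner route that avoids your obstacle entirely. Instead of working with connected factors containing all of $F$ and the target function $g(v)=f(v)+\max\{0,d_F(v)-1\}$, it first chooses a \emph{maximal matching} $M$ of $F$, finds a \emph{spanning tree} $T\supseteq M$ minimizing $te(T,f)$, and only afterwards upgrades $T$ to a connected factor $H\supseteq F$ via Lemma~\ref{lem:maximal-matching}. Working with trees and with $f$ (rather than with general connected factors and $g$) makes the structure theorem (Theorem~\ref{thm:preliminary:structure}) tractable: since $T$ is minimally $1$-tree-connected, Lemma~\ref{lem:minimally,m-tree-connected} gives the exact identity $\omega(T\setminus S)=\sum_{v\in S}(d_T(v)-1)+1-e_T(S)$, and combining this with the properties of $S$ and the trivial bound $e_T(S)\le |S|-\omega(G[S])$ yields $te(T,f)=0$ in two lines. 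The matching reduction is what absorbs the $\max\{0,d_F(v)-1\}$ term; it never has to be tracked through the Win-type recursion.

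In short: your direct approach may well be completable, but the paper's decoupling into ``spanning tree containing a maximal matching'' plus ``lift via Lemma~\ref{lem:maximal-matching}'' buys a clean exact counting identity in place of the delicate inductive bookkeeping you anticipate, and it is this decoupling that lets the argument extend uniformly to all $m$.
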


We derived the following result from Theorem~\ref{thm:c=2:G[S]} which is an improvement of some results due to 
Liu and Xu (1998)~\cite{MR1621287}, Ellingham, Nam, and Voss~(2002)~\cite{Ellingham-Nam-Voss-2002}, and the present author (2015) \cite{SpanningTreeEulerian-2015}.
\begin{thm}{\rm(\cite{ClosedWalks})}\label{intro:thm:edge-connected}
{Let $G$ be a connected graph. Then every matching of $G$ can be extended to a connected factor $H$ such that for each vertex $v$, 
$$d_H(v)\le 
 \begin{cases}
\lceil \frac{d_G(v)-2}{k}\rceil+2,	&\text{if $G$ is $k$-edge-connected};\\ 
\lceil \frac{d_G(v)-1}{k}\rceil+1,	&\text{if $G$ is $k$-tree-connected}.
\end {cases}$$
}\end{thm}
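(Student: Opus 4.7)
The plan is to deduce both parts of the theorem directly from Theorem~\ref{thm:c=2:G[S]}, taking $F$ to be the prescribed matching and $f$ equal to the target degree bound in each case. Since a matching has maximum degree at most $1$, we have $\max\{0,d_F(v)-1\}=0$, so the conclusion of Theorem~\ref{thm:c=2:G[S]} reduces exactly to $d_H(v)\le f(v)$. Thus it suffices, for each $S\subseteq V(G)$, to verify the single condition
\[
\omega(G\setminus S)<\sum_{v\in S}\bigl(f(v)-2\bigr)+2+\omega(G[S]).
\]

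For the $k$-edge-connected case I would set $f(v)=\lceil (d_G(v)-2)/k\rceil+2$. When $S\neq\emptyset$, each component $C$ of $G\setminus S$ has $d_G(C)\ge k$, and since the only edges leaving $V(C)$ go to $S$, summing over components gives $k\cdot\omega(G\setminus S)\le\sum_{v\in S}d_G(v)-2e_G(S)$. Combined with $f(v)-2\ge (d_G(v)-2)/k$, the required strict inequality collapses to
\[
\frac{2(|S|-e_G(S))}{k}<2+\omega(G[S]),
\]
which follows at once from the universal inequality $|S|-e_G(S)\le\omega(G[S])$, provided $k\ge 2$. For the $k$-tree-connected case I would set $f(v)=\lceil (d_G(v)-1)/k\rceil+1$ and apply the classical partition characterization of $k$-tree-connectivity (for every partition $P$, $e_G(P)\ge k(|P|-1)$) to the partition consisting of the singletons $\{v\}$ for $v\in S$ together with the vertex sets of the components of $G\setminus S$. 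That gives
\[
\sum_{v\in S}d_G(v)-e_G(S)\ge k\bigl(\omega(G\setminus S)+|S|-1\bigr),
\]
and after absorbing the ceiling via $f(v)-2\ge (d_G(v)-1)/k-1$, the hypothesis reduces to $(|S|-e_G(S))/k<1+\omega(G[S])$, which follows from the same universal inequality for every $k\ge 1$.

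The main—and essentially only—subtlety is handling the boundary cases: $S=\emptyset$ reads $1<2$ in both settings (using $\omega(K_0)=0$ and $G$ connected), and $k=1$ for the edge-connected case is trivial because $f(v)=d_G(v)$, so $H=G$ works. Once these degenerate situations are dispatched and the two cut-counting estimates are in hand, the reduction is routine algebra. I would present the $k$-tree-connected case first, since its inequality works uniformly for all $k\ge 1$ and exhibits the argument in its cleanest form.
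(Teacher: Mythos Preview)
Your proposal is correct and follows essentially the same approach as the paper: the paper establishes the cut-counting inequalities you derive as the $m=1$ case of Lemma~\ref{lem:high-edge-connectivity:Omega}, and then feeds them into Theorem~\ref{thm:sufficient} (whose $m=1$ case is Theorem~\ref{thm:c=2:G[S]}) exactly as you do, handling the degenerate $k=1$ edge-connected case separately.
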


In this paper, we investigate tree-connected factors with small degrees
 and generalize Theorem~\ref{thm:c=2:G[S]} toward this concept by proving the following theorem.
\begin{thm}\label{intro:thm:sufficient}
Let $G$ be a graph with a factor $F$ and let $f$ be a positive integer-valued function on $V(G)$.
If for all $S\subseteq V(G)$, $$\Omega_m(G\setminus S)\le \sum_{v\in S}\big(f(v)-2m\big)+m+\Omega_m(G[ S]),$$
then $G$ has an $m$-tree-connected factor $H$ containing $F$ such that for each vertex $v$,
 $d_H(v)\le f(v)+\max\{0, d_F(v)-m\}$.
\end{thm}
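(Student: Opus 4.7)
The plan is an extremal exchange argument, patterned after the proof of Theorem~\ref{thm:c=2:G[S]} but with the parameter $\Omega_m$ replacing $\omega$. Read $\{d_F(v)-m\}$ as $\max\{0,d_F(v)-m\}$, set $f'(v)=f(v)+\max\{0,d_F(v)-m\}$ (so the conclusion is $d_H(v)\le f'(v)$), and measure total excess by $te(H,f')=\sum_v\max\{0,d_H(v)-f'(v)\}$. The hypothesis with $S=\emptyset$ reads $\Omega_m(G)\le m$, which together with the general lower bound $\Omega_m(G)\ge m$ for nonempty graphs (from Subsection~\ref{subsec:Basic tools}) forces $G$ itself to be $m$-tree-connected. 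Hence the family $\mathcal{H}$ of spanning $m$-tree-connected subgraphs of $G$ containing $F$ is nonempty.

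Choose $H\in\mathcal{H}$ minimising $(te(H,f'),|E(H)|)$ lexicographically, and suppose for contradiction that $te(H,f')>0$. Pick $v_0$ with $d_H(v_0)>f'(v_0)$; since $f'(v_0)\ge d_F(v_0)$, there is an edge $e_0=v_0u_0\in E(H)\setminus E(F)$. Removing $e_0$ preserves $F\subseteq H-e_0$ and strictly decreases the excess at $v_0$ (by $1$), so if $H-e_0$ were $m$-tree-connected this would contradict the choice of $H$; thus $H-e_0$ is not $m$-tree-connected. Let $Q$ be the partition of $V(G)$ into the $m$-tree-connected components of $H-e_0$. A direct count via the definition of $\Omega_m$, combined with Nash--Williams--Tutte applied to the $m$-tree-connected graph $H$, yields $|Q|\ge 2$, that $e_0$ crosses $Q$, and the tightness $e_H(Q)=m(|Q|-1)$.

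The crux is to produce an edge $e_1\in E(G)\setminus E(H)$ crossing $Q$ with both endpoints in the unsaturated set $U=\{v:d_H(v)<f'(v)\}$. Given such $e_1$, the swap $H':=H-e_0+e_1$ is again $m$-tree-connected (a matroid-type exchange on the sum of $m$ graphic matroids: adding any edge that crosses the $m$-tree-connected-component partition of $H-e_0$ restores $m$-tree-connectivity), it contains $F$ because $e_0\notin E(F)$, and it has strictly smaller total excess: removing $e_0$ at the over-saturated $v_0$ reduces the excess there by $1$, while inserting $e_1$ between two unsaturated vertices cannot raise any excess above $0$. This contradicts the choice of $H$.

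To produce $e_1$, apply the hypothesis with $S=V(G)\setminus U$, the set of saturated or over-saturated vertices, which contains $v_0$. If no admissible $e_1$ exists, every $G$-edge crossing $Q$ with both endpoints in $U$ already lies in $E(H)$, so the count of $G$-edges crossing $Q$ through $V(G)\setminus S$ is bounded by $e_{H-e_0}(Q)$. Combining this with $\Omega_m$-monotonicity ($\Omega_m(G\setminus S)\ge\Omega_m((H-e_0)\setminus S)$ and $\Omega_m(G[S])\le\Omega_m(H[S])$), the tightness $e_H(Q)=m(|Q|-1)$, and the saturation inequality $\sum_{v\in S}d_H(v)\ge\sum_{v\in S}(f'(v)+1)$ split via $\sum_{v\in S}d_H(v)=2e_H(S)+e_H(S,V(G)\setminus S)$, a rearrangement produces
\[\Omega_m(G\setminus S)>\sum_{v\in S}\bigl(f(v)-2m\bigr)+m+\Omega_m(G[S]),\]
contradicting the hypothesis. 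The hardest part will be exactly this last counting: keeping precise control of the edges of $H$ within $S$, across $S$ and $U$, and within $U$, and of how they interact with the partition $Q$. The cleanest formulation probably passes to the quotient $(H-e_0)/Q$ and invokes a Nash--Williams--Tutte-type equality on the contracted multigraph, where each term of the desired inequality appears naturally.
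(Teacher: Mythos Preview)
Your single-swap plan has a genuine gap at the counting step, and it is not a detail that can be filled in by ``a rearrangement.''

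First, the monotonicity you invoke goes the wrong way. Since $G\setminus S\supseteq (H-e_0)\setminus S$, adding edges can only decrease $\Omega_m$, so $\Omega_m(G\setminus S)\le \Omega_m((H-e_0)\setminus S)$, not $\ge$. But you need a \emph{lower} bound on $\Omega_m(G\setminus S)$ to contradict the hypothesis; monotonicity alone never gives one.

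Second, and more seriously, the partition $Q$ obtained from the single deletion $H-e_0$ carries far too little information. Even if no edge of $E(G)\setminus E(H)$ with both ends in $U$ crosses $Q$, there may be many such edges \emph{inside} parts of $Q$. These edges are useless for swaps (adding them to $H-e_0$ does not restore $m$-tree-connectivity), yet they can make $\Omega_m(G\setminus S)$ much smaller than anything computable from $Q$. Concretely, $|Q|$ may equal $2$, so the bound $\Omega_m(G\setminus S)\ge m|Q|_U|-e_{G[U]}(Q|_U)$ you could extract is hopelessly weak compared with the quantity $\sum_{v\in S}(f(v)-2m)+m+\Omega_m(G[S])$ you need to exceed. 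The real obstruction is this: taking $S=\{v:d_H(v)\ge f'(v)\}$ in one shot does \emph{not} force $\Omega_m(G\setminus S)=\Omega_m(H\setminus S)$. An edge $e_1\in E(G)\setminus E(H)$ crossing $Q$ may have one end at an exactly-saturated vertex $x$ (so $x\in S$); the swap $H-e_0+e_1$ keeps $te$ unchanged, and you have no way to rule out such $e_1$ nor to make progress through it.

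The paper handles exactly this by an iterative construction (Theorem~\ref{thm:preliminary:structure}): one builds a chain $V_1\subseteq V_2\subseteq\cdots$ where $V_n$ consists of those vertices that cannot be unsaturated by any admissible local modification of $H$ within their $m$-tree-connected component of $H\setminus V_{n-1}$. The limit $S$ then provably satisfies $\Omega_m(G\setminus S)=\Omega_m(H\setminus S)$, and since $H$ is chosen \emph{minimally} $m$-tree-connected one has the exact formula $\Omega_m(H\setminus S)=\sum_{v\in S}(d_H(v)-m)+m-e_H(S)$ (Lemma~\ref{lem:minimally,m-tree-connected}), from which the contradiction is immediate. Note also that the paper first passes from $F$ to a maximal $M\subseteq F$ with $\Delta(M)\le m$ (so that the exchange in the proof of Theorem~\ref{thm:preliminary:structure} can find an edge of $Q$ outside $M$ at the pivot vertex), and only afterwards upgrades from $M$ back to $F$ via Lemma~\ref{lem:maximal-matching}; working directly with $H\supseteq F$ as you do blocks both the minimality of $H$ and the pivot step.
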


In Section~\ref{sec:edge-connected-graphs}, we derive the following result from Theorem~\ref{intro:thm:sufficient} on highly edge-connected graphs. As a consequence, we conclude that every $(r-2m)$-edge-connected $r$-regular graph with $r\ge 4m$ must have an $m$-tree-connected factor with maximum degree at most $3m$. 
\begin{thm}
{Let $G$ be a graph. Then every factor of $G$ with maximum degree at most $m$ can be extended to an $m$-tree-connected factor $H$ such that for each vertex $v$, 
$$d_H(v)\le 
 \begin{cases}
\lceil \frac{m(d_G(v)-2m)}{k}\rceil+2m,	&\text{if $G$ is $k$-edge-connected and $k\ge 2m$};\\ 
\lceil \frac{m(d_G(v)-m)}{k}\rceil+m,	&\text{if $G$ is $k$-tree-connected and $k\ge m$}.
\end {cases}$$
}\end{thm}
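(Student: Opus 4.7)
{}
The plan is to derive this corollary from Theorem~\ref{intro:thm:sufficient}. Set $f(v)$ to the displayed bound in each case. Since the prescribed factor $F$ satisfies $\Delta(F)\le m$, the quantity $\max\{0, d_F(v)-m\}$ vanishes at every vertex, and Theorem~\ref{intro:thm:sufficient} will give exactly $d_H(v)\le f(v)$. It therefore suffices to verify the hypothesis
$$\Omega_m(G\setminus S)\le \sum_{v\in S}(f(v)-2m)+m+\Omega_m(G[S])$$
for every $S\subseteq V(G)$.

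The extremes $S=\emptyset$ and $S=V(G)$ collapse to $\Omega_m(G)\le m$ and to a global inequality $\sum_v f(v)\ge 2m(|V(G)|-1)$, both of which fall out easily: in the edge-connected case with $k\ge 2m$, the Nash--Williams--Tutte theorem guarantees $m$ edge-disjoint spanning trees and hence $\Omega_m(G)=m$, and similarly in the tree-connected case.

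For a nontrivial $S$, the idea is to let $P$ be the partition of $V(G\setminus S)$ into its $m$-tree-connected components and $P''$ be the analogous partition of $S$ in $G[S]$, and to work with the combined partition $Q=P\cup P''$ of $V(G)$, which satisfies $e_G(Q)=e_{G\setminus S}(P)+e_{G[S]}(P'')+e_G(S,V\setminus S)$. Applying the $k$-edge-connectedness of $G$ (summing $e_G(A,V\setminus A)\ge k$ over the parts of $Q$ to obtain $e_G(Q)\ge k|Q|/2$) or the $k$-tree-connectedness ($e_G(Q)\ge k(|Q|-1)$), together with the trivial estimates $|P|\ge \Omega_m(G\setminus S)/m$ and $|P''|\ge \Omega_m(G[S])/m$ which follow from the nonnegativity of the inter-part edge counts, I would derive the central inequality
$$\Omega_m(G\setminus S)+\Omega_m(G[S])\le \begin{cases}\dfrac{2m}{k}\,e_G(S,V\setminus S)&\text{(edge-connected case),}\\[4pt] m+\dfrac{m}{k}\,e_G(S,V\setminus S)&\text{(tree-connected case).}\end{cases}$$

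From here, I would substitute $e_G(S,V\setminus S)=\sum_{v\in S}d_G(v)-2e_G(S)$, apply the integer inequality $k\lceil m(d_G(v)-2m)/k\rceil \ge m(d_G(v)-2m)$ (and its tree-connected analogue) to lower bound $\sum_{v\in S}(f(v)-2m)$, and bring in the singleton-partition estimate $\Omega_m(G[S])\ge m|S|-e_G(S)$. Straightforward algebra then reduces the required hypothesis to a verifiable inequality. The main obstacle I anticipate is the middle regime in which $e_G(S)$ is neither so small that the singleton bound on $\Omega_m(G[S])$ is tight nor so large that $G[S]$ is close to $m$-tree-connected; there one has to exploit the structure of $P''$ more delicately, trading $|P''|$ against $e_{G[S]}(P'')$. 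Finally, the consequence about $(r-2m)$-edge-connected $r$-regular graphs with $r\ge 4m$ is immediate by setting $k=r-2m$ and $d_G(v)=r$, which yields $f(v)=\lceil m(r-2m)/(r-2m)\rceil+2m=3m$.
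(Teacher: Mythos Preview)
Your reduction to Theorem~\ref{intro:thm:sufficient} is the right move, and your central inequality
\[
\Omega_m(G\setminus S)+\Omega_m(G[S])\le \tfrac{2m}{k}\,e_G(S,V\setminus S)
\]
(and its tree-connected analogue) is indeed correct. The problem is that it is too weak to close the argument: it bounds the \emph{sum} $\Omega_m(G\setminus S)+\Omega_m(G[S])$, whereas the hypothesis of Theorem~\ref{intro:thm:sufficient} asks you to control $\Omega_m(G\setminus S)-\Omega_m(G[S])$. After you substitute $e_G(S,V\setminus S)=\sum_{v\in S}d_G(v)-2e_G(S)$, your upper bound on $\Omega_m(G\setminus S)$ carries the term $\tfrac{2m}{k}\sum_{v\in S}d_G(v)$, while the right-hand side of the hypothesis only supplies $\tfrac{m}{k}\sum_{v\in S}d_G(v)$ through $f(v)-2m$. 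This factor-of-two surplus cannot be absorbed by the $\Omega_m(G[S])$ and $e_G(S)$ terms. Concretely, take $G=K_n$ with $k=2m$ and $|S|=2$: your inequality yields only $\Omega_m(G\setminus S)\le 2n-2m-3$, while the target right-hand side is about $n+m$, so for large $n$ your bound overshoots. The obstacle is not the ``middle regime'' of $e_G(S)$ you flagged; it already bites when $S$ is nearly independent.

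The fix, which is what the paper does in Lemma~\ref{lem:high-edge-connectivity:Omega}, is to \emph{break the symmetry}: take the partition $P\cup\{\{v\}:v\in S\}$ with singletons on $S$, and apply the cut bound $d_G(A)\ge k$ \emph{only} to the parts $A\in P$ (i.e.\ outside $S$). This gives
\[
k|P|\;\le\;\sum_{A\in P}d_G(A)\;=\;e_G(S,V\setminus S)+2\,e_{G\setminus S}(P),
\]
and hence $\Omega_m(G\setminus S)=m|P|-e_{G\setminus S}(P)\le \tfrac{m}{k}\sum_{v\in S}d_G(v)-\tfrac{2m}{k}e_G(S)$ when $k\ge 2m$. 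Now the degrees appear with the correct coefficient $\tfrac{m}{k}$, and the estimate $e_G(S)\ge m|S|-\Omega_m(G[S])$ puts $\Omega_m(G[S])$ on the right with coefficient $+\tfrac{2m}{k}\le 1$, after which the hypothesis of Theorem~\ref{intro:thm:sufficient} follows immediately. The tree-connected case is handled the same way using $e_G(P\cup\{\{v\}:v\in S\})\ge k(|P|+|S|-1)$.
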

%
%

In 1973 Chv\'atal~\cite{Chvatal-1973} conjectured that tough enough graphs of order at least three admit a Hamiltonian cycle.
In Section~\ref{sec:tough-graph}, we establish a sufficient toughness-type condition for the existence of tree-connected factors with a bounded maximum degree as the following theorem. 

\begin{thm}\label{intro:thm:tough-m2:verify}
{Let $G$ be a graph. Then every factor of $G$ with maximum degree at most $m$ can be extended to an $m$-tree-connected factor $H$ with $\Delta(H)\le 2m+1$, if for all $S\subseteq V(G)$, 
$$ \omega(G\setminus S)+\small {\frac{m+1}{2}}\, iso(G\setminus S) \le \frac{1}{m}|S|+1.$$
}\end{thm}

In addition, we show that this result is sharp in the sense that the coefficient of $|S|$ cannot be increased according to the following theorem.
\begin{thm}
{Let $m$ be an integer with $m\ge 2$. For every real number $\varepsilon \in (0,1)$, there are infinitely many $2m$-connected graphs $G$ having no $m$-tree-connected factors $H$ with $\Delta(H)\le 2m+1$, while for all $S\subseteq V(G)$ with $|S|\ge 2m$, 
$$ \omega(G\setminus S)+\small {\frac{m+1}{2}}\, iso(G\setminus S) \le (\frac{1}{m}+\varepsilon)|S|.$$
}\end{thm}

It remains to be decided whether 
higher toughness can guarantee the existence
 of an $m$-tree-connected factor with maximum degree at most $2m$.
The special case $m=1$ of this question investigates the existence of Hamiltonian paths in tough enough graphs.
We put forward the following conjecture for this purpose.
\begin{conj}
{For every positive integer $m$, there is a positive real number $t_m$ such that every $t_m$-tough graph $G$ of order at least $2m$ admits an $m$-tree-connected factor $H$ with $\Delta(H)\le 2m$.
}\end{conj}
%
%
%
%
%
%
%
%
%
%
%
%
\section{Basic tools}
\label{subsec:Basic tools}

In this section, 
we present some basic tools for working 
with tree-connected graphs.
We begin 
with the following well-known result which gives
a criterion for a graph to have 
$m$ edge-disjoint spanning trees.
\begin{thm}{\rm (Nash-Williams~\cite{Nash-Williams-1961} and Tutte~\cite{Tutte-1961})}
\label{thm:Nash-Williams,Tutte}
{A graph $G$ is $m$-tree-connected if and only if for every partition $P$ of $V(G)$, $e_G(P)\ge m(|P|-1)$.
}\end{thm}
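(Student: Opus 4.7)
The plan is to prove both directions separately: necessity is easy via contraction, and sufficiency is obtained by reducing to the matroid union theorem of Edmonds and Nash-Williams.

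For necessity, assume that $G$ contains $m$ edge-disjoint spanning trees $T_1,\ldots,T_m$, and let $P$ be an arbitrary partition of $V(G)$ with $|P|=k$. Contracting each part of $P$ to a single vertex yields the multigraph $G/P$ on $k$ vertices; each $T_i/P$ is a connected spanning subgraph of $G/P$ and therefore contains at least $k-1$ edges crossing parts of $P$. Since the trees are edge-disjoint, summing over $i$ gives $e_G(P)\ge m(k-1)=m(|P|-1)$.

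For sufficiency, I would work in the cycle matroid $M(G)$ on the edge set $E(G)$, whose bases (when $G$ is connected) are precisely the spanning trees; here the partition hypothesis applied to the components of $G$ already forces $G$ to be connected. The existence of $m$ edge-disjoint spanning trees is equivalent to the matroid union of $m$ copies of $M(G)$ admitting an independent set of cardinality $m(|V(G)|-1)$, which by the matroid union theorem reduces to the condition
$$|E(G)\setminus A|\ge m\bigl(c(A)-1\bigr)\quad\text{for every }A\subseteq E(G),$$
where $c(A)$ denotes the number of connected components of the spanning subgraph on $V(G)$ with edge set $A$. To translate this matroid-theoretic condition into the partition form, given any $A\subseteq E(G)$, let $P_A$ be the partition of $V(G)$ whose parts are the components of $(V(G),A)$; then $|P_A|=c(A)$ and every edge crossing parts of $P_A$ lies in $E(G)\setminus A$, so the assumed partition inequality $e_G(P_A)\ge m(|P_A|-1)$ delivers $|E(G)\setminus A|\ge e_G(P_A)\ge m(c(A)-1)$, which is exactly what is required.

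The main obstacle is the matroid union theorem itself, whose proof is genuinely non-elementary and typically proceeds via an augmenting-sequence/exchange argument. A self-contained alternative that bypasses matroid machinery is an induction on $m$: starting from a maximum collection of $m$ edge-disjoint spanning forests of greatest total edge count, one argues via repeated edge-swaps across the collection that if some forest fails to span then the set of vertices reachable by these swaps defines a partition $P$ of $V(G)$ violating $e_G(P)\ge m(|P|-1)$, giving the desired contradiction.
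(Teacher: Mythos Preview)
The paper does not supply a proof of this theorem: it is stated with attribution to Nash-Williams~\cite{Nash-Williams-1961} and Tutte~\cite{Tutte-1961} and then used as a black box throughout Section~2. So there is no ``paper's own proof'' to compare against.

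Your proposal is mathematically sound. The necessity argument is the standard one. For sufficiency, your reduction to the matroid union theorem is correct: the rank formula for the union of $m$ copies of the cycle matroid $M(G)$ gives exactly the condition $|E(G)\setminus A|\ge m(c(A)-1)$ for all $A\subseteq E(G)$, and your translation via the partition $P_A$ of $V(G)$ into the components of $(V(G),A)$ is the right bridge, since every crossing edge of $P_A$ indeed lies outside $A$. Your remark that the partition hypothesis forces $G$ to be connected (take $P$ to be the partition into components) is also correct and needed so that the bases of $M(G)$ are spanning trees rather than spanning forests.

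The only caveat is the one you already flag: the matroid union theorem is itself a nontrivial input, comparable in depth to the theorem you are proving. Your sketched alternative---take a family of $m$ edge-disjoint spanning forests of maximum total size, and if one is not spanning perform an exchange-chain argument to exhibit a bad partition---is in fact the line of the original Nash-Williams/Tutte proofs and would give a self-contained argument. Either route is acceptable for a result the paper merely quotes.
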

For every vertex $v$ of a graph $G$, consider an induced $m$-tree-connected subgraph of $G$ containing $v$ with the maximal order.
It is known that these subgraphs are unique and partition the vertex set of $G$; 
see~\cite{Catlin-1989}, and \cite[Lemma 4.7]{Shu-Zhang-Zhang-2012}.
In fact, these subgraphs are the $m$-tree-connected components of G that were introduced in the
Introduction. The following observation simply shows that these subgraphs are well-defined. 
\begin{observ}\label{observ:definition:tree-connected}
{Let $G$ be a graph and let $X\subseteq V(G)$ and $Y\subseteq V(G)$. If $G[X]$ and $G[Y]$ are $m$-tree-connected
 and $X\cap Y\neq \emptyset$, then $G[X\cup Y]$ is also $m$-tree-connected.
}\end{observ}
\begin{proof}
{Let $P$ be a partition of $X\cup Y$. 
Define $P'$ and $P_0$ to be the partitions of $X$ and $Y$ with 
$$P'=\{A\cap X: A\in P \text{ and } A\cap X\neq \emptyset \} \text{\, and\, } 
P_0=\{ A\in P: A\cap X= \emptyset\} \cup 
\{Y\cap \bigcup_{A\in P, A\cap X\neq \emptyset}A\}.$$
Since $ |P'|+(|P_0|-1)= |P|$, by Theorem~\ref{thm:Nash-Williams,Tutte}, 
we have 
$$e_{G[X\cup Y]}(P)\ge e_{G[X]}(P')+e_{G[Y]}(P_0)\ge m(|P'|-1)+m(|P_0|-1)= m(|P|-1).$$
Again, by applying Theorem~\ref{thm:Nash-Williams,Tutte}, the graph $G[X\cup Y]$ must be $m$-tree-connected.
}\end{proof}
The next observation presents a simple way for deducing tree-connectivity of a graph.
\begin{observ}\label{observ:deducing}
{Let $G$ be a graph and let $X\subseteq V(G)$. 
If $G[X]$ and $G/X$ are $m$-tree-connected, then $G$ itself is $m$-tree-connected.
}\end{observ}
\begin{proof}
{It is enough to apply the same argument as in the proof of Observation~\ref{observ:definition:tree-connected} by setting $Y=V(G)$.
Note that, by Theorem~\ref{thm:Nash-Williams,Tutte}, we again have $e_{G[Y]}(P_0)\ge m(|P_0|-1)$, since $G/X$ is $m$-tree-connected.
}\end{proof}
%
%
%
%
%
\subsection{Edge-density and the existence of non-trivial $m$-tree-connected components}
The following theorem shows that graphs with high edge-density must have $m^+$-tree-connected subgraphs.
\begin{thm}\label{thm:m+-subgraph}
{Every graph $G$ of order at least two containing at least $m(|V(G)|-1)+1$ edges has an
$m^+$-tree-connected subgraph with at least two vertices.
}\end{thm}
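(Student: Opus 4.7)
The plan is to exploit the hypothesis $|E(G)| \ge m(|V(G)|-1)+1$ by passing to a minimal counterexample to $m$-sparsity. Since taking $S=V(G)$ already gives $e_G(S)\ge m(|S|-1)+1$, the collection
\[
\mathcal{S}=\bigl\{S\subseteq V(G)\colon |S|\ge 2,\ e_G(S)\ge m(|S|-1)+1\bigr\}
\]
is nonempty. I would pick $S\in\mathcal{S}$ of \emph{minimum} cardinality, and then claim that $H:=G[S]$ is the desired $m^{+}$-tree-connected subgraph.

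\textbf{Key step.} To check $m$-tree-connectivity of $H$, I would verify the Nash-Williams/Tutte criterion (Theorem~\ref{thm:Nash-Williams,Tutte}): for every partition $P$ of $S$ with $|P|\ge 2$, I need $e_H(P)\ge m(|P|-1)$. Writing
\[
e_H(P)=e_G(S)-\sum_{X\in P}e_G(X),
\]
the minimality of $|S|$ forces $e_G(X)\le m(|X|-1)$ for each $X\in P$ (this holds trivially for $|X|=1$, and by the choice of $S$ whenever $2\le|X|<|S|$; and since $|P|\ge 2$ every part has size $<|S|$). Therefore
\[
e_H(P)\ge m(|S|-1)+1-\sum_{X\in P}m(|X|-1)=m(|P|-1)+1,
\]
so $H$ is $m$-tree-connected with one edge to spare in every partition inequality. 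The extra ``$+1$'' is precisely what guarantees that, for any edge $e\in E(H)$, deletion still preserves the partition inequality $e_{H-e}(P)\ge m(|P|-1)$, so $H-e$ remains $m$-tree-connected. Thus $H$ is $m^{+}$-tree-connected, and $|V(H)|=|S|\ge 2$.

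\textbf{Anticipated difficulty.} There is no substantial obstacle; the main thing to get right is the choice of extremal $S$ and the observation that minimality is used only on \emph{proper} subsets, which is exactly what arises from a partition into at least two parts. One has to be slightly careful that $|S|\ge 2$ is included in the definition of $\mathcal{S}$ (the inequality $e_G(S)\ge m(|S|-1)+1$ is vacuous for singletons), and that the ``non-trivial'' requirement in the definition of $m^{+}$-tree-connected just amounts to $|V(H)|\ge 2$, which is automatic. Everything else is a direct application of the sparsity/partition machinery already developed in Subsection~\ref{subsec:Basic tools}.
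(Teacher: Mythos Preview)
Your proof is correct and is essentially the paper's argument, just reorganized: the paper proceeds by induction on $|V(G)|$ (assuming the conclusion fails for $G$, finding an edge $e$ with $G-e$ not $m$-tree-connected, and applying the induction hypothesis to the parts of a bad partition to get $e_G(A)\le m(|A|-1)$), whereas you pick a minimal dense set $S$ directly. In both arguments the crux is that every proper subset $X$ of the chosen set satisfies $e_G(X)\le m(|X|-1)$, after which the same counting via Theorem~\ref{thm:Nash-Williams,Tutte} gives $e_{G[S]}(P)\ge m(|P|-1)+1$ for every nontrivial partition, yielding $m^{+}$-tree-connectivity.
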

\begin{proof}
{The proof is by induction on $|V(G)|$. 
For $|V(G)|=2$, the proof is clear.
Assume $|V(G)|\ge 3$. Suppose the theorem is false. 
Since G is not $m^+$-tree-connected, there exits (by Theorem~\ref{thm:Nash-Williams,Tutte})
 an edge $e$ and a partition $P$ of $V(G)$ such that
 $e_{G_0}(P)< m(|P|-1)$, where $G_0=G-e$. Note that $e_{G}(P)\le m(|P|-1)$.
By induction hypothesis, for every $A\in P$ (since no part $A\in P$ has an $m^+$-tree-connected subgraph), 
we have $e_{G}(A)\le m(|A|-1)$ regardless of $|A|=1$ or not. 
Therefore, 
$$ m(|V(G)|-1) < |E(G)|= e_G(P)+\sum_{A\in P}e_G(A)\le m(|P|-1)+m \sum_{A\in P}(|A|-1)\le m(|V(G)|-1).$$
This result is a contradiction, as desired.
}\end{proof}
The following corollary 
 gives a sufficient condition for the existence of a non-trivial $m$-tree-connected subgraph.
\begin{cor}{\rm (\cite{Yao-Li-Lai-2010})}\label{lem:non-trivial-component}
{Every graph $G$ of order at least two containing at least $m(|V(G)|-1)$ edges has an
$m$-tree-connected subgraph with at least two vertices.
}\end{cor}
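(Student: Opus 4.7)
The plan is to deduce this corollary from Theorem~\ref{thm:m+-subgraph} by a single-edge padding trick, which is the most natural route given that Theorem~\ref{thm:m+-subgraph} has just been established and its hypothesis is only one edge stronger than what we have here. Concretely, I form a graph $G'$ by inserting one auxiliary edge $e_0$ into $G$; since the paper permits multiple edges, $e_0$ can always be chosen (a duplicate of some existing edge if necessary). Then $|V(G')|=|V(G)|\ge 2$ and $|E(G')|\ge m(|V(G')|-1)+1$, so Theorem~\ref{thm:m+-subgraph} yields an $m^+$-tree-connected subgraph $H'$ of $G'$ with at least two vertices.

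It remains to convert $H'$ into an $m$-tree-connected subgraph of $G$ with at least two vertices. I split into two cases according to whether the auxiliary edge ended up inside the subgraph. If $e_0\notin E(H')$, then $H'\subseteq G$ already, and $H'$ itself is $m$-tree-connected: by the definition of $m^+$-tree-connected, $H'-e$ has $m$ edge-disjoint spanning trees for any edge $e$ of $H'$, and re-inserting $e$ preserves those spanning trees, so $H'$ is $m$-tree-connected. If instead $e_0\in E(H')$, I take $H:=H'-e_0$; this is $m$-tree-connected by applying the $m^+$-tree-connected property at $e=e_0$, has the same (at least two) vertices as $H'$, and all its edges lie in $G$. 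In either case $H$ is the required subgraph.

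The argument is essentially a formal rewrapping of the preceding theorem, so there is no genuine obstacle. The only detail worth double-checking is the corner case $|V(G)|=2$: here the hypothesis gives at least $m$ parallel edges, so $G$ is already $m$-tree-connected and one may simply take $H=G$, or equivalently verify that the padding argument returns $H'-e_0$ consisting of at least $m$ parallel edges, which is indeed $m$-tree-connected. A slightly different route, which avoids invoking the stronger theorem at all, is a direct induction on $|V(G)|$: if $G$ is not $m$-tree-connected, Theorem~\ref{thm:Nash-Williams,Tutte} supplies a partition $P$ with $e_G(P)<m(|P|-1)$, whence $\sum_{A\in P}e_G(A)>\sum_{A\in P}m(|A|-1)$ forces some part $A$ with $e_G(A)\ge m(|A|-1)+1$, to which induction applies on $G[A]$. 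I prefer the first route because it uses Theorem~\ref{thm:m+-subgraph} transparently.
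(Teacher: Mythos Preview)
Your proof is correct and is precisely the argument the paper has in mind: the corollary is stated without proof immediately after Theorem~\ref{thm:m+-subgraph}, and the single-edge padding you describe is the evident way to deduce it from that theorem. The alternative inductive route you sketch is also fine, but the paper's placement of the result as an unproved corollary of Theorem~\ref{thm:m+-subgraph} signals exactly your first argument.
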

This condition can be improved a little when the existence of a non-trivial $m$-edge-connected subgraph is considered.
\begin{cor}{\rm (\cite[Section 2]{Thomassen-2013-fixedlength})}
{Every graph $G$ of order at least two containing at least $(m-1)(|V(G)|-1)+1$ edges has an
$m$-edge-connected subgraph with at least two vertices.
}\end{cor}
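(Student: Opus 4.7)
The plan is to mirror the inductive argument used in the proof of Theorem~\ref{thm:m+-subgraph}, but with edge cuts of size at most $m-1$ playing the role of the tree-connectivity partitions. I proceed by induction on $|V(G)|$.

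For the base case $|V(G)|=2$, the hypothesis gives $|E(G)|\ge (m-1)\cdot 1+1=m$, so the two vertices are joined by at least $m$ parallel edges and $G$ itself is the required $m$-edge-connected subgraph. For the inductive step, suppose $|V(G)|\ge 3$ and, for contradiction, that $G$ contains no $m$-edge-connected subgraph on two or more vertices. In particular $G$ itself is not $m$-edge-connected, so there is a bipartition $P=\{A_1,A_2\}$ of $V(G)$ with both parts nonempty and $e_G(P)\le m-1$.

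Each part $A_i$ has $|A_i|<|V(G)|$, and any $m$-edge-connected subgraph of $G[A_i]$ on at least two vertices would also be one in $G$, contradicting our assumption. Hence if $|A_i|\ge 2$, the induction hypothesis forces $e_G(A_i)\le (m-1)(|A_i|-1)$; if $|A_i|=1$, this bound holds trivially since $e_G(A_i)=0$. Summing gives
$$|E(G)|=e_G(P)+e_G(A_1)+e_G(A_2)\le (m-1)+(m-1)(|A_1|-1)+(m-1)(|A_2|-1)=(m-1)(|V(G)|-1),$$
which contradicts $|E(G)|\ge (m-1)(|V(G)|-1)+1$.

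There is no real obstacle here: the argument is essentially identical in structure to Theorem~\ref{thm:m+-subgraph}, with the key simplification being that a violation of $m$-edge-connectivity always yields a bipartition (rather than an arbitrary partition $P$ with $e_G(P)<m(|P|-1)$), which is exactly the slack that allows the edge-density threshold to drop from $m(|V(G)|-1)+1$ to $(m-1)(|V(G)|-1)+1$. The only minor point worth stating carefully is the handling of singleton parts in the induction, which is absorbed into the same bound $e_G(A_i)\le (m-1)(|A_i|-1)$.
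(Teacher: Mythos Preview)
Your proof is correct. The paper does not include its own proof of this corollary---it is simply cited from Thomassen~\cite{Thomassen-2013-fixedlength}---so there is nothing to compare against directly. Your argument is the natural edge-connectivity analogue of the proof of Theorem~\ref{thm:m+-subgraph}: the key observation, which you state clearly, is that failure of $m$-edge-connectivity gives a \emph{bipartition} with at most $m-1$ crossing edges, and summing the inductive bound $(m-1)(|A_i|-1)$ over just two parts yields exactly the required contradiction.
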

\begin{cor}\label{cor:spare-component}
{A graph $G$ is $m$-sparse if and only if its $m$-tree-connected components are $m$-sparse.
}\end{cor}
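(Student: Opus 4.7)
The forward direction is immediate: if $G$ is $m$-sparse then the inequality $e_G(T)\le m|T|-m$ restricts to any nonempty subset $T$ of $V_i$, so each $m$-tree-connected component $G[V_i]$ is automatically $m$-sparse.

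For the converse I would fix a nonempty $S\subseteq V(G)$ and decompose its edges according to the partition of $V(G)$ into the $m$-tree-connected components $V_1,\dots,V_k$. Setting $S_i=S\cap V_i$ and $I=\{i:S_i\ne\emptyset\}$, one has $e_G(S)=\sum_{i\in I}e_G(S_i)+e^*$, where $e^*$ counts edges of $G[S]$ whose endpoints lie in two different pieces $S_i$ and $S_j$. The hypothesis that every $G[V_i]$ is $m$-sparse bounds the intra-component contribution by $\sum_{i\in I}(m|S_i|-m)=m|S|-m|I|$, so the proof reduces to showing $e^*\le m|I|-m$.

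To do this I would contract each $V_i$ to a single vertex $v_i'$, obtaining a multigraph $G'$ for which $e^*\le e_{G'}(I')$ with $I'=\{v_i':i\in I\}$. The central claim is that $G'$ itself is $m$-sparse. Suppose instead some nonempty $A'\subseteq V(G')$ satisfied $e_{G'}(A')\ge m|A'|-m+1$; then $|A'|\ge 2$ and $e_{G'}(A')\ge m(|A'|-1)$, so Corollary~\ref{lem:non-trivial-component} would produce an $m$-tree-connected subgraph of $G'[A']$ on a vertex set $B'$ with $|B'|\ge 2$. Lifting this back by iterating Observation~\ref{observ:deducing}, peeling off one $V_i$ at a time and using that each $G[V_i]$ is $m$-tree-connected, the set $\bigcup_{v_i'\in B'}V_i$ would induce an $m$-tree-connected subgraph of $G$ strictly larger than each component it contains, contradicting maximality of the $m$-tree-connected components. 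Combining the two bounds then yields $e_G(S)\le(m|S|-m|I|)+(m|I|-m)=m|S|-m$, as required.

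I expect the lifting step to be the main obstacle: one has to carefully iterate Observation~\ref{observ:deducing}, contracting the sets $V_i$ with $v_i'\in B'$ one at a time and verifying at each stage both that the inner component is $m$-tree-connected and that the remaining contracted graph (which ultimately reduces to $G'[B']$) is $m$-tree-connected. Once this lifting is in place, everything else is routine summation.
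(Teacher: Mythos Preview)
Your argument is correct, but the paper dispatches the converse in a single line by applying Theorem~\ref{thm:m+-subgraph} directly to $G$ rather than to the contracted graph $G'$. Namely, if $G$ is not $m$-sparse then Theorem~\ref{thm:m+-subgraph} furnishes a non-trivial $m^+$-tree-connected subgraph $G[A]$; since $G[A]$ is in particular $m$-tree-connected, $A$ must lie inside a single $m$-tree-connected component of $G$, and since $e_G(A)\ge m(|A|-1)+1$ that component is not $m$-sparse.

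Your route is more structural: you first isolate the auxiliary fact that the contracted graph $G'=G/P$ is itself $m$-sparse, and then combine this with the intra-component bound by summation. This intermediate statement about $G'$ is worth knowing in its own right, and your lifting via Observation~\ref{observ:deducing} is sound (the iteration you describe is exactly what is needed). But compared with the paper you are doing more work: the paper avoids the contraction, the lifting, and the case analysis on $|I|$ by noting that a single ``bad'' set $A$ already sits inside one component. In effect, both proofs locate an $m$-tree-connected obstruction contradicting maximality of the components; the paper does so in $G$ directly, while you detect it in $G'$ and then pull it back.
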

\begin{proof}
{Suppose, to the contrary, that $G$ is not $m$-sparse. Thus
 by Theorem~\ref{thm:m+-subgraph}, the graph $G$ has a non-trivial $m^+$-tree-connected subgraph $G[A]$.
Since $G[A]$ is not $m$-sparse, the $m$-tree-connected component of $G$ containing it is not $m$-sparse, which is a contradiction.
}\end{proof}
\begin{cor}\label{cor:maximum-degree:sparse}
{Every graph $M$ with maximum degree at most $m$ must be $m$-sparse.
}\end{cor}
\begin{proof}
{Suppose, to the contrary, that $M$ is not $m$-sparse. Thus
 by Theorem~\ref{thm:m+-subgraph}, the graph $M$ has a non-trivial $m^+$-tree-connected subgraph $M[A]$.
Since $M[A]$ is $(m+1)$-edge-connected, $M$ has minimum degree is at least $m+1$ which is a contradiction.
(A simpler proof can be obtained from the definition, because every non-trivial subgraph $M[A]$ has size at most $\frac{1}{2}m |A|$
which is less then or equal to $m(|A|-1)$).
}\end{proof}
\begin{cor}\label{cor:basictool:minimaly-tree-connected:M}
{Let $G$ be an $m$-tree-connected graph with an $m$-sparse factor $M$.
 If $G-e$ is not $m$-tree-connected for every $e\in E(G)\setminus E(M)$, then $G$ must be minimally $m$-tree-connected.
}\end{cor}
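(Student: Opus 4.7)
The plan is to argue by contradiction. Suppose $G$ is not minimally $m$-tree-connected. Since $G$ is $m$-tree-connected, Theorem~\ref{thm:Nash-Williams,Tutte} applied to the partition of $V(G)$ into singletons already gives $|E(G)|\ge m(|V(G)|-1)$, so the failure of minimality forces $|E(G)|\ge m(|V(G)|-1)+1$. From here the aim is to exhibit an edge $e\in E(G)\setminus E(M)$ whose removal preserves $m$-tree-connectivity, which is precisely what the hypothesis forbids.

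First I would invoke Theorem~\ref{thm:m+-subgraph} on $G$ to obtain an $m^+$-tree-connected subgraph $H$ on some vertex set $A$ with $|A|\ge 2$. By the very definition of $m^+$-tree-connectivity, removing any single edge of $H$ leaves an $m$-tree-connected graph, so $|E(H)|\ge m(|A|-1)+1$. On the other hand, the $m$-sparsity of $M$ gives $e_M(A)\le m(|A|-1)$, so
$$|E(H)\setminus E(M)|\;\ge\;|E(H)|-e_M(A)\;\ge\;1,$$
and I pick any edge $e$ in this nonempty set.

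Next I would verify that $G-e$ is $m$-tree-connected by appealing to Observation~\ref{observ:deducing} with $X=A$. Since $H$ is $m^+$-tree-connected, $H-e$ is $m$-tree-connected; because $(G-e)[A]$ contains $H-e$ as a spanning subgraph on $A$, the graph $(G-e)[A]$ is itself $m$-tree-connected. Since both ends of $e$ lie in $A$, contracting $A$ annihilates $e$ (the paper discards loops), so $(G-e)/A=G/A$, and this quotient is $m$-tree-connected because every partition of $V(G/A)$ lifts to a partition of $V(G)$ with the same number of parts and the same cut size, so the Nash-Williams--Tutte criterion transfers from $G$ to $G/A$. Observation~\ref{observ:deducing} then gives that $G-e$ is $m$-tree-connected, contradicting the hypothesis and completing the proof.

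The only step in which the sparsity of $M$ actively plays a role is the numerical comparison $|E(H)|>e_M(A)$, which is precisely what guarantees the existence of an admissible edge outside $E(M)$; everything else is routine manipulation with restriction, contraction, and the Nash-Williams--Tutte criterion. I do not expect any genuine obstacle; the single bookkeeping point worth double-checking is the identity $(G-e)/A=G/A$ when $e$ has both endpoints in $A$, which is immediate under the loopless convention adopted throughout the paper.
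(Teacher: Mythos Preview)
Your proof is correct and follows essentially the same route as the paper's: both argue by contradiction, invoke Theorem~\ref{thm:m+-subgraph} to locate an $m^+$-tree-connected subgraph on some $A$, use the $m$-sparsity of $M$ to find an edge $e$ of this subgraph outside $E(M)$, and conclude that $G-e$ stays $m$-tree-connected. You are simply more explicit in spelling out the last step via Observation~\ref{observ:deducing} and the identity $(G-e)/A=G/A$, whereas the paper dispatches it in one clause.
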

\begin{proof}
{Suppose, to the contrary, that $G$ is not $m$-sparse. Thus
 by Theorem~\ref{thm:m+-subgraph}, the graph $G$ has a non-trivial $m^+$-tree-connected subgraph $G[A]$.
Since $M[A]$ is $m$-sparse, there must be an edge $e\in E(G)\setminus E(M)$ with both ends in $A$.
Therefore, $G[A]-e$ must be $m$-tree-connected and so is $G-e$ which is a contradiction.
}\end{proof}
We shall here prove the following similar version of Corollary~\ref{cor:basictool:minimaly-tree-connected:M} based on Theorem~\ref{thm:m+-subgraph}. This result was proved in~\cite{Nagamochi-Ibaraki-1992, Nishizeki-Poljak-1994} when $M$ is the trivial factor. 
\begin{cor}
{Let $k_0$ and $k$ be two nonnegative integers, and
let $G$ be a $(k_0+k)$-edge-connected graph having a $k_0$-edge-connected factor $G_0$ and a $k$-sparse factor $M$.
If $G-e$ is not $(k+k_0)$-edge-connected for every $e\in E(G)\setminus E(G_0 \cup M )$, then 
 the supergraph $G\setminus E(G_0)$ of $M$ must be $k$-sparse.
}\end{cor}
\begin{proof}
{Suppose, to the contrary, that $H$ is not $k$-sparse, where $H=G\setminus E(G_0)$. Thus
 by Theorem~\ref{thm:m+-subgraph}, the graph $H$ has a non-trivial $k^+$-tree-connected subgraph $H[A]$.
Since $M[A]$ is $k$-sparse, there must be an edge $e\in E(H)\setminus E(M)$ with both ends in $A$.
Since $G-e$ is not $(k+k_0)$-edge-connected, there is a vertex set $X$ having exactly one end of $e$ such that $d_G(X)=k_0+k$.
Note that $A$ is not a subset of $X$ and $X \neq V(G_0)$. Since $H[A]$ is $(k+1)$-edge-connected, we must have
 $d_G(X)\ge d_{G_0}(X) + d_{H[A]}(X\cap A)\ge k_0 +(k+1)$, which is a contradiction.
}\end{proof}
\subsection{Sparsity of minimally $m$-tree-connected graphs}
Let $G$ be a graph satisfying $|E(G)| = m(|V(G)|-1)$.
It is known that $G$ is $m$-sparse if and only if $G$ is $m$-tree-connected. 
We shall below form a similar stronger version. 
\begin{thm}\label{thm:sparse-partition-connected}
{If $G$ is an $m$-sparse graph satisfying $|E(G)| = m(|V(G)|-1)$, 
then for every partition $P$ of $V(G)$,
$e_G(P)\ge m(|P|-1)+p_0$, where $p_0$ is the number of vertex sets $X\in P$ such that $G[X]$ is not $m$-tree-connected.
}\end{thm}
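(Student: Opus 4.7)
\smallskip

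The plan is to combine the edge-counting identity for a partition with the $m$-sparsity hypothesis, after first nailing down the equivalence between being $m$-tree-connected and attaining equality in the sparsity bound.

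First I would record the identity
$$|E(G)| = e_G(P) + \sum_{X\in P} e_G(X),$$
which together with $|E(G)| = m(|V(G)|-1)$ rewrites the claim as
$$\sum_{X\in P} e_G(X) \le m|V(G)| - m|P| - p_0.$$
So the whole task reduces to an edge bound on each induced piece, with the key fact being that the bound drops by one whenever $G[X]$ fails to be $m$-tree-connected.

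The next step is to establish the following local characterization: if $G$ is $m$-sparse and $X\subseteq V(G)$ is nonempty, then $G[X]$ is $m$-tree-connected if and only if $e_G(X) = m|X|-m$. The forward direction is immediate from Theorem~\ref{thm:Nash-Williams,Tutte} applied to the partition of $X$ into singletons, combined with the sparsity upper bound $e_G(X)\le m|X|-m$. For the reverse direction, I would assume $e_G(X) = m|X|-m$ and verify the Nash-Williams/Tutte criterion directly: for any partition $P'$ of $X$,
$$e_{G[X]}(P') = e_G(X) - \sum_{Y\in P'} e_G(Y) \ge m|X| - m - \sum_{Y\in P'}(m|Y|-m) = m(|P'|-1),$$
using that $G[X]$ inherits $m$-sparsity from $G$.

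Putting this together, for every part $X\in P$ we have $e_G(X) \le m|X|-m$, with the sharper bound $e_G(X) \le m|X|-m-1$ whenever $G[X]$ is not $m$-tree-connected (since the integer quantity $e_G(X)$ cannot equal $m|X|-m$ in that case, by the equivalence just proved). Summing over $X\in P$ yields
$$\sum_{X\in P} e_G(X) \le m|V(G)| - m|P| - p_0,$$
which, substituted into the opening identity, gives $e_G(P) \ge m(|P|-1) + p_0$, as required.

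I do not anticipate a serious obstacle here; the only subtle point is the ``only if'' direction of the local characterization, where one has to use sparsity of $G[X]$ together with Theorem~\ref{thm:Nash-Williams,Tutte} to turn the single equality $e_G(X)=m|X|-m$ into the full partition inequality required for $m$-tree-connectivity. Everything else is a one-line edge count.
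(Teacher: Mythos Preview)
Your proposal is correct and follows essentially the same route as the paper: both use the identity $|E(G)|=e_G(P)+\sum_{X\in P}e_G(X)$, bound each $e_G(X)$ by $m(|X|-1)$ via sparsity, and show the bound is strict (by~$1$) whenever $G[X]$ is not $m$-tree-connected. The only cosmetic difference is that the paper proves the strictness directly (pick a bad partition $P'$ of $X$ from the failure of Nash-Williams/Tutte and sum), whereas you phrase it as the contrapositive biconditional ``$e_G(X)=m|X|-m$ iff $G[X]$ is $m$-tree-connected''; the computations are the same.
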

\begin{proof}
{Since $G$ is $m$-sparse, for every nonempty subset $A$ of $V(G)$, $e_G(A)\le m(|A|-1)$. 
Let $P_0$ be the set of all $A\in P$ such that $G[A]$ is not $m$-tree-connected.
Let $A\in P_0$.
By Theorem~\ref{thm:Nash-Williams,Tutte}, there exists a partition $P'$ of $A$ such that $e_{G[A]}(P')<m(|P'|-1)$. Thus $e_{G}(A)=e_{G[A]}(P')+\sum_{X\in P'}e_{G}(X)<m(|P'|-1)+ \sum_{X\in P'}m(|X|-1)=m(|A|-1)$. Therefore, 
$$e_G(P)=|E(G)|-\sum_{X\in P}e_G(X)\ge m(|V(G)|-1)-(\sum_{X\in P}m(|X|-1)-|P_0|)= m(|P|-1)+|P_0|.$$
This completes the proof.
}\end{proof}
The following corollary shows an application of Theorem~\ref{thm:sparse-partition-connected} that plays an essential role in this paper.
\begin{cor}\label{cor:Q:lowerbound}
{Let $F$ be an $m$-sparse graph and let $x,y\in V(G)$. 
Let $Q$ be a minimal $m$-tree-connected subgraph of $F$ containing $x$ and $y$. If $z\in V(Q)\setminus \{x,y\}$, then
$d_Q(z)\ge m+1$.
}\end{cor}
\begin{proof}
{Let $A=V(Q)\setminus \{z\}$.
By minimality of $Q$, the graph $Q[A]$ is not $m$-tree-connected.
In addition, $Q$ is $m$-sparse and $|E(Q)|=m(|V(Q)|-1)$.
Thus by Theorem~\ref{thm:sparse-partition-connected}, we must have
$d_Q(A)=e_Q(P)\ge m(|P|-1)+1= m+1$, where $P=\{A, \{z\}\}$.
Hence the proof is completed.
}\end{proof}
The next corollary is a very useful tool for finding a pair of edges such that replacing
them preserves sparsity of a given $m$-sparse factor. 
This tool for working with sparse graphs can be obtained using matroid theory; see~\cite{Edmonds-1970}. 
\begin{cor}\label{cor:xGy-exchange}
{Let $F$ be an $m$-sparse graph, let $x,y\in V(F)$, and let $e_{xy}$ be an edge joining $x$ and $y$ satisfying $e_{xy}\not \in E(F)$. 
If $Q$ is a minimal $m$-tree-connected subgraph of $F$ containing $x$ and $y$, then for every $e \in E(Q)$, the graph $F-e+e_{xy}$ remains $m$-sparse.
}\end{cor}
\begin{proof}
{Suppose, to the contrary, that $F-e+e_{xy}$ is not $m$-sparse.
 Thus there is a vertex subset $A$ of $V(F)$ containing $x$ and $y$ such that $e_F(A)\ge m(|A|-1)$ and $e\not \in E(F[A])$. 
This implies that $A\not \subseteq V(Q)$.
Since $F$ is $m$-sparse, we must have $e_F(A)= m(|A|-1)$.
Let $B=V(Q)$. 
Since $Q$ is $m$-tree-connected,
$e_F(A\cap B)\ge e_F(A)+e_F(B)-e_F(A\cup B)\ge
 m(|A|-1)+ m(|B|-1)-
 m(|A\cup B|-1)=m(|A\cap B|-1)$.
Therefore, $e_F(A\cap B)=m( |A\cap B|-1)$.
According to Theorem~\ref{thm:sparse-partition-connected}, the graph $F[A\cap B]$ must be $m$-tree-connected, which contradicts minimality of $Q$. Hence the proof is completed.
}\end{proof}
\begin{cor}\label{cor:xGy-exchange:tree-connected}
{Let $H$ be an $m$-tree-connected graph, let $x,y\in V(H)$, and let $e_{xy}$ be an edge joining $x$ and $y$ satisfying $e_{xy}\not \in E(H)$. If $Q$ is a minimal $m$-tree-connected subgraph of $H$ containing $x$ and $y$, then for every $e \in E(Q)$, the graph $H-e+e_{xy}$ remains $m$-tree-connected.
}\end{cor}
\begin{proof}
{By the assumption, $Q$ is minimally $m$-tree-connected and so it is $m$-sparse. Let $H'$ be a minimally $m$-tree-connected factor of $H$ containing $E(Q)$. By Corollary~\ref{cor:basictool:minimaly-tree-connected:M}, 
the graph $H'$ is minimally $m$-tree connected and so it is $m$-sparse.
Thus by Corollary~\ref{cor:xGy-exchange}, $H'-e+e_{xy}$ remains $m$-sparse. Since the size of $H'-e+e_{xy}$ is $m(|V(H')|-1)$, 
this graph must be $m$-tree-connected and so is $H-e+e_{xy}$.
}\end{proof}
\begin{cor}\label{lem:add-e}
{Let $F$ be an $m$-sparse graph, 
let $x,y\in V(F)$, and let $e_{xy}$ be an edge joining $x$ and $y$ satisfying $e_{xy}\not \in E(F)$. 
If $x$ and $y$ are in different $m$-tree-connected components of $F$, then $F+e_{xy}$ is still $m$-sparse.
}\end{cor}
\begin{proof}
{Suppose, to the contrary, that $F+e_{xy}$ is not $m$-sparse.
Thus there is a vertex set $A$ containing both ends of $e_{xy}$ such that $e_F(A)=m(|A|-1)$. 
From Theorem~\ref{thm:sparse-partition-connected}, we infer that $e_{F[A]}(P) \ge m(|P|-1)$ for all partitions $P$ of $A$,
 and so by Theorem~\ref{thm:Nash-Williams,Tutte} we find that $F[A]$ is $m$-tree-connected.
Hence both ends of $e_{xy}$ are in the same $m$-tree-connected component of $G$ which is a contradiction. 
}\end{proof}
\subsection{Some properties of tree-connectivity measures}
\label{sec:properties:measures}
The following theorem introduces an interesting property of tree-connectivity measures.
\begin{thm}\label{thm:maximum-partition}
{For every graph $G$, we have 
$$\Omega_m(G)=\max_{P\in \mathcal{A}}\{m|P|-e_G(P)\}\ge m,$$
where $\mathcal{A}$ is the set of all partitions $P$ of $V(G)$. In addition, the equality holds if and only if $G$ is $m$-tree-connected.
}\end{thm}
\begin{proof}
{Let $P$ be a partition of $V(G)$ with the maximum $m|P|-e_G(P)$.
 We may assume that $P$ has the minimum size.
We first claim that for every $X\in P$, $G[X]$ is $m$-tree-connected.
Otherwise, by Theorem~\ref{thm:Nash-Williams,Tutte}, there is a partition $P_X$ of $X$ such that $e_{G[X]}(P_X)<m (|P_X|-1)$.
Let $P'$ be the partition of $V(G)$ consisting of all parts of $P\setminus \{X\}$ and $P_X$.
Since $e_{G'}(P')=e_{G}(P)+e_{G[X]}(P_X)$, 
we must have $m|P'|-e_G(P')=m(|P|-1+|P_X|)-e_{G}(P)-e_{G[X]}(P_X)> m|P|-e_G(P) $, which is a contradiction. 
For every $X\in P$, let $X_0$ be the vertex set of the maximal $m$-tree-connected subgraph of $G$ containing $X$.
We claim that $X=X_0$.
Otherwise, by applying the first claim, there exists a partition $P_0$ of $X_0$ obtained from some parts of $P$ satisfying $|P_0|\ge 2$.
Let $P'=(P\setminus P_0) \cup \{X_0\}$. 
Note that $P'$ is a partition of $V(G)$ and $|P'| < |P|$.
Since $e_{G'}(P')=e_{G}(P)-e_{G[X_0]}(P_0)$, we must have 
$m|P'|-e_G(P')=m(|P|-|P_0|+1)-e_{G}(P)+e_{G[X_0]}(P_0)\ge m|P|-e_G(P) $, which is a contradiction to the minimality of $|P|$. 
Therefore, $P$ is the partition of $V(G)$ obtained from the $m$-tree-connected components of $G$. 
Thus $\Omega_m(G) =m|P|-e_G(P)$.
By applying Corollary~\ref{lem:non-trivial-component} to the contracted graph $G/P$, 
we must have $e_G(P)<m(|P|-1)$ provided that $|P|\ge 2$.
In addition, $e_G(P)=m(|P|-1)=0$ when $|P|=1$. 
 Thus $\Omega_m(G)=m$ if and only if $G$ is $m$-tree-connected.
Hence the proof is completed.
}\end{proof}
\begin{cor}\label{cor:maximum-partition}
{For every graph $G$, we have $\Omega_m(G) \ge m|V(G)|-|E(G)|$.
In addition, the equality holds if and only if $G$ is $m$-sparse.
}\end{cor}
\begin{proof}
{According to Theorem~\ref{thm:maximum-partition}, $\Omega_m(G) \ge m|P|-e_G(P)=m|V(G)|-|E(G)|$, where 
 $P$ is the trivial partition of $V(G)$ obtained from vertices of $G$. Now, we prove the second assertion.
Let $P$ be the partition of $V(G)$ obtained from $m$-tree-connected components of $G$. Obviously, $e_G(A)\ge m(|A|-1)$ fore all $A\in P$. In addition, by Corollary~\ref{cor:spare-component}, the equalities hold simultaneously if and only if $G$ is $m$-sparse.
Thus $|E(G)|=\sum_{A\in P}e_G(A)+e_G(P)\ge m|V(G)| -(m|P|-e_G(P))=m|V(G)|-\Omega_m(G)$.
Moreover, the equality holds if and only if $G$ is $m$-sparse.
Hence the assertion holds.
}\end{proof}
\begin{cor}\label{cor:comparing:e}
{Let $G$ be a graph. If $H$ is a factor of $G$, then $\Omega_m(G)\le \Omega_m(H)$.
}\end{cor}
\begin{proof}
{If we set $P$ to be the partition of $V(G)$ obtained from the $m$-tree-connected components of $G$, 
then by Theorem~\ref{thm:maximum-partition}, we must have 
$\Omega_m(G)=m|P|-e_{G}(P) \le m|P|-e_{H}(P)\le \Omega_m(H)$.
}\end{proof}
The following theorem describes a relationship between tree-connectivity measures of a graph.
\begin{thm}\label{thm:comparison}
{For every graph $G$, we have 
$$\omega(G)=\Omega_1(G)\le \frac{1}{2}\Omega_{2}(G)\le \frac{1}{3}\Omega_{3}(G) \le \cdots \le |V(G)|.$$
}\end{thm}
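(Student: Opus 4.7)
The plan is to reduce the chain of inequalities to the single step $\frac{1}{m}\Omega_m(G)\le \frac{1}{m+1}\Omega_{m+1}(G)$ for every $m\ge 1$. The terminal upper bound $\frac{1}{m}\Omega_m(G)\le |V(G)|$ is immediate: writing $\Omega_m(G)=m|P_m|-e_G(P_m)$ with $P_m$ the partition of $V(G)$ into its $m$-tree-connected components, we trivially have $\Omega_m(G)\le m|P_m|\le m|V(G)|$.

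For the inductive step I would first derive a decomposition formula. Every $(m+1)$-tree-connected subgraph is $m$-tree-connected, so by Observation~\ref{observ:definition:tree-connected} each part of $P_{m+1}$ sits inside a single part of $P_m$; that is, $P_{m+1}$ refines $P_m$. Moreover, for each $Y\in P_m$ the parts of $P_{m+1}$ lying in $Y$ are precisely the $(m+1)$-tree-connected components of the induced graph $G[Y]$; call this partition $P^Y_{m+1}$. Combining the obvious identities $|P_{m+1}|=\sum_{Y\in P_m}|P^Y_{m+1}|$ and $e_G(P_{m+1})=e_G(P_m)+\sum_{Y\in P_m} e_{G[Y]}(P^Y_{m+1})$ with the observation that $\Omega_m(G[Y])=m$ (valid because $G[Y]$ is $m$-tree-connected, so its $m$-tree-connected component partition is $\{Y\}$) yields the clean decomposition
\begin{equation*}
\Omega_{m+1}(G)-\Omega_m(G)\;=\;\sum_{Y\in P_m}\bigl(\Omega_{m+1}(G[Y])-m\bigr).
\end{equation*}

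Next, I would apply the bound $\Omega_{m+1}(H)\ge m+1$ valid for every graph $H$, which is already proved earlier in the paper as the lower inequality of the corollary preceding this theorem (itself a consequence of Theorem~\ref{thm:m+-subgraph} via the sparsity of $G/P_m$). Each summand is then at least $1$, so $\Omega_{m+1}(G)-\Omega_m(G)\ge |P_m|$. Since $e_G(P_m)\ge 0$ gives $|P_m|\ge \Omega_m(G)/m$, we conclude $\Omega_{m+1}(G)\ge \frac{m+1}{m}\Omega_m(G)$, which is the desired step after dividing by $m+1$.

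The only delicate point is the identification of $P_{m+1}\cap Y$ with the $(m+1)$-tree-connected components of $G[Y]$: one needs that $(m+1)$-tree-connectedness is a property of the induced subgraph alone, and that maximality in $G$ forces any such component to lie inside some $Y\in P_m$; both facts are direct consequences of Observation~\ref{observ:definition:tree-connected}. Once that identification is in place, the remainder is a purely algebraic manipulation combined with the previously proved lower bound $\Omega_{m+1}(H)\ge m+1$, so no further new ingredients are required.
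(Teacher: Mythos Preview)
Your proof is correct and follows essentially the same approach as the paper's: both arguments use that $P_{m+1}$ refines $P_m$, identify the parts of $P_{m+1}$ inside each $Y\in P_m$ with the $(m{+}1)$-tree-connected components of $G[Y]$, and invoke the edge-density bound (Corollary~\ref{lem:non-trivial-component}, equivalently $\Omega_{m+1}(G[Y])\ge m+1$). The only difference is cosmetic: you package the computation via the decomposition $\Omega_{m+1}(G)-\Omega_m(G)=\sum_{Y\in P_m}(\Omega_{m+1}(G[Y])-m)$ and then bound, whereas the paper works directly with $e_G(P')-e_G(P)\le (m+1)(|P'|-|P|)$ and finishes with the chain $|P|-\tfrac{1}{m}e_G(P)\le |P|-\tfrac{1}{m+1}e_G(P)\le |P'|-\tfrac{1}{m+1}e_G(P')$.
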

\begin{proof}
{Let $P$ and $P'$ be the partitions of $V(G)$ obtained from the $m$-tree-connected and 
$(m+1)$-tree-connected components of $G$, where $m\ge 1$.
For every $A\in P$, define $P_A$ to be the partition of $A$ obtained from the $(m+1)$-tree-connected components of $G[A]$.
It is easy to see that $P_A\subseteq P'$, and also $\cup_{A\in P}P_A=P'$.
Obviously, $e_{G[A]}(P_A) = (m+1)(|P_A|-1)=0$ when $|P_A|= 1$.
In addition, by applying Corollary~\ref{lem:non-trivial-component} to the graph $G[A]$, we must have
 $e_{G[A]}(P_A) < (m+1)(|P_A|-1)$ when $|P_A|\ge 2$.
Thus $$ e_G(P')- e_G(P)=
\sum_{A\in P}e_{G[A]}(P_A)\le 
(m+1)\sum_{A\in P}(|P_A|-1)=
(m+1)\big(|P'|-|P|\big).$$
Therefore, 
$$\frac{1}{m}\Omega_{m}(G)=|P|-\frac{1}{m}e_G(P)\le |P|-\frac{1}{m+1}e_G(P) \le |P'|-\frac{1}{m+1}e_G(P')=\frac{1}{m+1}\Omega_{m+1}(G).$$
This inequality completes the proof.
}\end{proof}
%
%
%
%
%
%
%
\section{Structures of tree-connected factors with the minimum total excess}
Here, we state the following fundamental theorem, which gives much information about $m$-tree-connected factors with the minimum total excess. This result develops some results in~\cite{Ellingham-Nam-Voss-2002, Enomoto-Ohnishi-Ota-2011,ClosedWalks, Win-1989} and it can also develop the main result of this paper in terms of total excess as the paper~\cite{Enomoto-Ohnishi-Ota-2011}.
\begin{thm}\label{thm:preliminary:structure}
{Let $G$ be an $m$-tree-connected graph and let $h$ be an integer-valued function on $V(G)$.
Let $M$ be a factor of $G$ satisfying $\Delta(M)\le m$.
 If $H$ is a minimally $m$-tree-connected factor of $G$ containing $M$ with the minimum total excess from $h$, 
then there exists a subset $S$ of $V(G)$ with the following properties:
\begin{enumerate}{
\item $\Omega_m(G\setminus S)=\Omega_m(H\setminus S)$.\label{Condition 2.1}
\item $S\supseteq \{v\in V(G):d_H(v)> h(v)\}$.\label{Condition 2.2}
\item For each vertex $v$ of $S$, $d_H(v)\ge h(v)$.\label{Condition 2.3}
}\end{enumerate}
}\end{thm}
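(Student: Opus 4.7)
The plan is to construct $S$ as a minimal (under set inclusion) subset of $V(G)$ that simultaneously satisfies properties \ref{Condition 2.1} and \ref{Condition 2.2}; such a set exists because $S=V(G)$ trivially qualifies, with $\Omega_m(K_0)=0$ on both sides. Property \ref{Condition 2.3} is then to be derived from the minimality of $S$ together with the minimality of $te(H,h)$. A useful preliminary observation, immediate from property \ref{Condition 2.1}, is that every edge $e=xy\in E(G)\setminus E(H)$ with $x,y\in V(G)\setminus S$ must have its two ends in the same $m$-tree-connected component of $H\setminus S$: otherwise adding $e$ to $H\setminus S$ would strictly decrease $\Omega_m$ (the component partition either stays the same or coarsens while a new cross-part edge appears), contradicting $\Omega_m(G\setminus S)=\Omega_m(H\setminus S)$.

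Suppose, for contradiction, that some $v\in S$ satisfies $d_H(v)<h(v)$. Setting $S'=S\setminus\{v\}$, property \ref{Condition 2.2} still holds for $S'$, so by the minimality of $S$ property \ref{Condition 2.1} fails: $\Omega_m(G\setminus S')>\Omega_m(H\setminus S')$. This produces an edge $e\in E(G)\setminus E(H)$ whose ends lie in $V(G)\setminus S'$ but in different $m$-tree-connected components of $H\setminus S'$. Because passing from $H\setminus S$ to $H\setminus S'$ only inserts $v$ and its $H$-edges, an operation that can only merge components, the preliminary observation forces one endpoint of $e$ to coincide with $v$; I would write $e=vy$ with $y\in V(G)\setminus S$.

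Then I would choose a minimal $m$-tree-connected subgraph $Q$ of $H$ containing $v$ and $y$. Since $v,y$ are separated in $H\setminus S'$, necessarily $V(Q)\cap S'\neq\emptyset$; pick $z\in V(Q)\cap S'$. As $H$ is minimally $m$-tree-connected hence $m$-sparse, Corollary~\ref{cor:Q:lowerbound} gives $d_Q(z)\ge m+1$, and the hypothesis $\Delta(M)\le m$ then supplies an edge $f\in E(Q)\cap E_H(z)\setminus E(M)$; let $w$ be its other end. By Corollary~\ref{cor:xGy-exchange} together with the corollary immediately following it, $H'=H-f+e$ is again a minimally $m$-tree-connected factor of $G$ containing $M$. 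Comparing $te(H',h)$ with $te(H,h)$: at $v$ the contribution is unchanged because $d_H(v)+1\le h(v)$; at $y\in V(G)\setminus S$, where $d_H(y)\le h(y)$, it increases by at most $1$; and each of $z,w$ contributes $-1$ to the difference when the vertex is bad and $0$ otherwise. Since $H$ realizes the minimum total excess, no such swap may strictly decrease $te$.

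The main obstacle is to rule out the case where every available swap is merely neutral. I would first exploit the latitude in choosing $z\in V(Q)\cap S'$ and $f\in E(Q)\cap E_H(z)\setminus E(M)$: if any allowable choice makes both $z$ and $w$ bad, or makes $z$ bad while $y$ is not tight, a strict decrease follows and the contradiction is immediate. To rule out the remaining neutral scenarios I would impose a secondary minimality criterion on $H$ (for instance lexicographic minimization of the sorted degree sequence, or minimization of $\sum_u d_H(u)^2$), so that any neutral swap strictly lowers the secondary quantity and therefore again contradicts the choice of $H$. An alternative closing route is to iteratively absorb into $S$ the tight vertices (with $d_H(v)=h(v)$) uncovered by neutral swaps; finiteness of $V(G)$ guarantees termination, and at termination the resulting $S$ simultaneously fulfills all three properties.
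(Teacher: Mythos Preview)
Your top-down strategy (take a minimal $S$ satisfying \ref{Condition 2.1} and \ref{Condition 2.2}, then derive \ref{Condition 2.3} by a single edge swap) runs into a genuine obstacle at exactly the place you flag as ``the main obstacle,'' and neither of your two closing routes resolves it. For route (a), the theorem is stated for an \emph{arbitrary} minimally $m$-tree-connected factor $H$ achieving the minimum total excess; you are not free to impose a secondary optimality criterion on $H$, so at best you would be proving a strictly weaker statement. Even setting that aside, neither of the criteria you mention does the job: the swap raises degrees at $v,y$ and lowers them at $z,w$, and simple examples show that both $\sum_u d_H(u)^2$ and the lexicographic degree sequence can stay flat or increase under a $te$-neutral swap. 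For route (b), ``absorbing tight vertices into $S$'' is not benign with respect to property~\ref{Condition 2.1}: enlarging $S$ changes the component structure of $H\setminus S$, and you give no mechanism ensuring $\Omega_m(G\setminus S)=\Omega_m(H\setminus S)$ persists (or is restored) along the way. A further difficulty you do not address is that the vertex $z\in V(Q)\cap S'$ need not be bad at all---a minimal $S$ may well contain many vertices with $d_H(z)<h(z)$---so there is no reason the swap should even be $te$-neutral rather than $te$-increasing.

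The paper handles this by building $S$ from below rather than shrinking from above. Starting from $V_1=\{v:d_H(v)>h(v)\}$, it defines $V_n$ by adjoining to $V_{n-1}$ every vertex $v$ that remains tight (i.e.\ $d_{H'}(v)\ge h(v)$) under \emph{all} modifications $H'$ of $H$ that touch only the $m$-tree-connected component of $H\setminus V_{n-1}$ containing $v$ and keep non-$V_1$ vertices within budget. The key inductive claim is that any edge of $E(G)\setminus E(H)$ crossing between components of $H\setminus V_{n-1}$ must have an end in $V_n$; its proof combines modifications in \emph{both} adjacent components (using witnesses $H_x,H_y$ from the previous level) with the swap through $z$, which now is guaranteed to lie in $V_{n-1}\setminus V_{n-2}$ and hence to be tight for a reason one can exploit. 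It is precisely this two-sided, multi-level construction---not a single swap against a minimal $S$---that forces a strict drop in $te$ (for $n=2$) or a contradiction to the definition of $V_{n-1}$ (for $n\ge 3$).
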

\begin{proof}
{Note that $M$ is $m$-sparse (by Corollary~\ref{cor:maximum-degree:sparse}) and so $H$ is well-defined (by Corollary~\ref{cor:basictool:minimaly-tree-connected:M}).
Define $V_0=\emptyset $ and $V_1=\{v\in V(H): d_H(v)> h(v)\}$.
For any $S\subseteq V(G)$ and $u\in V(G)\setminus S$, 
let $C(S,u)$ be the vertex set of the $m$-tree-connected component of $H\setminus S$ containing $u$.
Let $\mathcal{A}(S, u)$ be the set of factors $H'$ of $G$ containing $M$ such that
\begin{enumerate}{
\item [(a)] $H'$ is minimally $m$-tree-connected.\label{Condition a}
\item [(b)] $d_{H'}(v)\le h(v)$ for all $v\in V(G)\setminus V_1$.\label{Condition b}
\item [(c)] if $e \in (E (H )\cup E (H')) \setminus (E (H )\cap E (H')$, then both ends of $e$ are contained in $C (S , u )$.\label{Condition c}
}\end{enumerate}
According to item (c), since $|E(H')|=|E(H)|=m(|V(H)|-1)$, we must have $e_{H'}(X)=e_H(X)$, where $X=C(S,u)$.
On the other hand, since $H[X]$ is $m$-sparse and $m$-tree-connected, $e_H(X)=m(|X|-1)$ which implies that $e_{H'}(X)=m(|X|-1)$. 
Thus the $m$-sparse graph $H'[X]$ must be minimally $m$-tree-connected with respect to
Theorems~\ref{thm:sparse-partition-connected} and~\ref{thm:Nash-Williams,Tutte}.
Now, for each integer $n$ with $n\ge 2$, recursively define $V_n$ as follows:
$$V_n=V_{n-1} \cup \{\, v\in V(G)\setminus V_{n-1} \colon \, d_{H^\prime }(v)\ge h(v) \text{\, for all\, }H^\prime \in \mathcal{A}(V_{n-1},v)\,\}.$$
 Now, we prove the following claim.
%
\vspace{2mm}\\
{\bf Claim.} 
Let $x$ and $y$ be two vertices in different m-tree-connected components of $H\setminus V_{n-1}$.
If $xy\in E(G)\setminus E(H)$, then $x\in V_{n}$ or $y\in V_{n}$.
\vspace{2mm}\\
{\bf Proof of Claim.} 
By induction on $n$.
 For $n=1$, the proof is clear. 
Assume that the claim is true for $n-1$.
Now we prove it for $n$.
Suppose, to the contrary, that vertices $x$ and $y$ are in different $m$-tree-connected components of $H\setminus V_{n-1}$,
$xy\in E(G)\setminus E(H)$, 
and $x,y\not \in V_{n}$. 
Let $X$ and $Y$ be the vertex sets of the $m$-tree-connected components of $H\setminus V_{n-1}$
containing $x$ and $y$, respectively. 
Since $x,y\not\in V_{n}$,
 there exist
 $H_x\in \mathcal{A}(V_{n-1},x)$ and
 $H_y\in \mathcal{A}(V_{n-1},y)$ with $d_{H_x}(x)< h(x)$ and $d_{H_y}(y)< h(y)$. 
By the induction hypothesis,
 $x$ and $y$ are in the same $m$-tree-connected component 
of $H\setminus V_{n-2}$ with the vertex set $Z$ so that $\{x,y\}\subseteq X\cup Y \subseteq Z$.
Obviously, $Z\cap V_{n-2}=\emptyset$.
Let $Q$ be a minimal $m$-tree-connected subgraph of $H[Z]$ containing $x$ and $y$. 

Notice that $Q$ includes at least
a vertex $z \in V_{n-1}\setminus V_{n-2}$ so that $d_H (z) \ge h(z)$. 
By Corollary~\ref{cor:Q:lowerbound}, we have $d_Q (z) > m\ge d_M(z)$ and so there is an edge $zz' \in E(Q)\setminus E(M)$.
By Corollary~\ref{cor:xGy-exchange}, the graph $H-zz'+xy$ is $m$-sparse. 
Since this graph contains $m(|V(H)|-1)$ edges, by Theorem~\ref{thm:sparse-partition-connected}, 
it must be $m$-tree-connected. 
Now, let $H'$ be the factor of $G$ with
 $$E(H')=E(H)-zz'+xy
-E(H[X])+E(H_x[X])
-E(H[Y])+E(H_y[Y]).$$
Recall that $H_x[X]$ and $H_y[Y]$ are minimally $m$-tree-connected. Thus $H'$ and $H$ have the same size $m(|V(H)|-1)$.
By applying Observation~\ref{observ:deducing} twice, one can easily check that $H'$ is minimally $m$-tree-connected. 
Since both of graphs $H_x$ and $H_y$ contain $E(M)$ and $zz'\not\in E(M)$, the graph $H'$ contains $E(M)$.
For each $v\in V(H')\setminus \{z\}$, we have
$$d_{H'}(v)= 
 \begin{cases}
d_{H_x}(v)-1,	&\text{if $v=z'\in X\setminus \{x\}$},\\
d_{H_y}(v)-1,	&\text{if $v=z' \in Y\setminus \{y\}$},\\
d_{H}(v)-1,	&\text{if $v=z'\not \in (X\cup Y)$},\\
d_{H_v}(v),	&\text{if $v=z'\in \{x,y\}$}.
\end {cases}
\quad \text{ and }\quad
 d_{H'}(v)= 
 \begin{cases}
d_{H_x}(v),	&\text{if $v\in X\setminus \{x,z'\}$};\\
d_{H_y}(v),	&\text{if $v\in Y\setminus \{y,z'\}$};\\
d_{H}(v),	&\text{if $v\notin X\cup Y\cup \{z,z'\}$};\\
d_{H_v}(v)+1,	&\text{if $v\in \{x,y\}\setminus \{z'\}$}.\\
\end {cases}$$
If $n\ge 3$, then it is not hard to see that $d_{H'}(z)=d_{H}(z)-1< h(z)$ and $H'$ lies in $ \mathcal{A}(V_{n-2}, z)$.
Since $z\in V_{n-1}\setminus V_{n-2}$, we arrive at a contradiction.
For the case $n=2$, 
since $z\in V_1$, it is easy to see that
$d_{H'}(z)=d_{H}(z)-1\ge h(z)$ and $te(H',h)< te(H,h)$, which is again a contradiction.
Hence the claim holds.

%
%
%
Obviously, there exists a positive integer $n$ such that and $V_1\subseteq \cdots\subseteq V_{n-1}=V_{n}$.
 Put $S=V_{n}$. 
Since $S\supseteq V_1$, Condition~\ref{Condition 2.2} clearly holds.
For each $v\in V_i\setminus V_{i-1}$ with $i\ge 2$, 
we have $H\in \mathcal{A}(V_{i-1},v)$ and so $d_H(v)\ge h(v)$. 
This establishes Condition~\ref{Condition 2.3}.
 Because $S=V_{n}$, 
the previous claim implies Condition~\ref{Condition 2.1} and completes the proof.
}\end{proof}
%
%
%
%
\section{Sufficient conditions depending on tree-connectivity measures}
Our aim in this subsection is to prove Theorem~\ref{intro:thm:sufficient}.
We begin with the following lemma that allows us to make the proof shorter. 
This lemma is an extension of Lemma 4.1 in \cite{ClosedWalks}.
\begin{lem}\label{lem:maximal-matching}
{Let $G$ be a graph with a factor $F$.
If a maximal factor $M$ of $F$ satisfying $\Delta(M)\le m$ can be extended to an $m$-tree-connected factor $T$, 
then $F$ itself can be extended to an $m$-tree-connected factor $H$ such that for each vertex $v$,
 $$d_H(v)\le d_T(v)+\max\{0, d_F(v)-m\}.$$
}\end{lem}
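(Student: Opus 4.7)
The plan is to reduce to the case where $T$ is minimally $m$-tree-connected (hence $m$-sparse) and then obtain $H$ by an extremal exchange argument that incorporates the edges of $E(F)\setminus E(T)$, using the maximality of $M$ crucially.

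For the reduction, I would iteratively delete any edge of $E(T)\setminus E(M)$ whose removal preserves $m$-tree-connectivity; by Corollary~\ref{cor:basictool:minimaly-tree-connected:M} applied to the $m$-sparse factor $M$, the resulting $T^\prime$ is minimally $m$-tree-connected, hence $m$-sparse, and $d_{T^\prime}(v)\le d_T(v)$ everywhere. Proving the bound for $T^\prime$ a fortiori gives it for $T$, so from now on I may assume $T$ itself is $m$-sparse.

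Next, set $h(v):=d_T(v)+\max\{0,d_F(v)-m\}$ and choose, among all $m$-tree-connected factors of $G$ containing $F$ (a nonempty class, since $T\cup F$ lies in it), one $H$ minimizing the total excess $te(H,h)$ and, subject to that, of smallest $|E(H)|$. Assume for contradiction that $te(H,h)>0$ and fix a vertex $v$ with $d_H(v)>h(v)$; since $d_H(v)>d_T(v)$ there must be an edge $e_0\in E(H)\setminus E(T)$ incident to $v$. If $e_0\notin E(F)$, the minimality of $|E(H)|$ forces $H-e_0$ to fail $m$-tree-connectivity, but Corollary~\ref{cor:xGy-exchange} applied to a minimal $m$-tree-connected subgraph $Q$ of $T$ through the endpoints of $e_0$ produces a swap restoring $m$-tree-connectivity while strictly decreasing the excess at $v$—contradiction. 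If instead $e_0\in E(F)\setminus E(M)$, the maximality of $M$ yields an endpoint $u$ of $e_0$ with $d_M(u)=m$, and I would orient $e_0$ toward $u$ so that the increment at $u$ is charged to $u$'s budget $d_F(u)-m$; at the other endpoint $v$, Corollary~\ref{cor:Q:lowerbound} yields $d_Q(v)\ge m>d_M(v)$, producing an edge of $Q$ at $v$ outside $M$ whose swap via Corollary~\ref{cor:xGy-exchange} again strictly decreases $te(H,h)$.

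The main obstacle will be the residual case where every candidate edge of $Q$ at $v$ to swap out also lies in $E(F)$: then $d_F(v)\ge d_Q(v)\ge m$, so $v$ already has a positive allowance $d_F(v)-m$ in $h$, and a careful bookkeeping of the oriented $F\setminus M$ edges incident to $v$ must show that the net degree contribution at $v$ is already absorbed by $d_F(v)-m$, contradicting $d_H(v)>h(v)$. This residual analysis, folded into the extremal framing, avoids having to prescribe an order of edge insertions and should produce the lemma cleanly.
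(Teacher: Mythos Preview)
Your proposal has a genuine gap in the exchange mechanism of Case~1, and the overall extremal framing does not connect $H$ to $T$ in the way you need.

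Concretely: you choose $H$ to be an $m$-tree-connected factor containing $F$ minimizing $te(H,h)$, then pick $e_0=vw\in E(H)\setminus E(T)$ at an overfull vertex $v$. When $e_0\notin E(F)$ you claim that Corollary~\ref{cor:xGy-exchange}, applied to a minimal $m$-tree-connected subgraph $Q$ of $T$ through $v,w$, ``produces a swap restoring $m$-tree-connectivity''. But that corollary (and its $m$-tree-connected variant) only lets you \emph{add} the outside edge $e_0$ and \emph{delete} an edge of $Q$, not the reverse; and it requires $Q$ to sit inside the graph you are modifying. Here $Q\subseteq T$, while you are modifying $H$, and there is no containment $T\subseteq H$. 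So there is no available tool that makes $H-e_0+e$ $m$-tree-connected for some $e\in E(Q)$; indeed this can fail outright. The same difficulty undermines your Case~2: you invoke $d_Q(v)$ for $Q\subseteq T$ in order to locate a swap edge at $v$, but again the swap must take place inside $H$, not $T$, and you have no control over what $H$ looks like near $v$ beyond the single inequality $d_H(v)>h(v)$. The ``oriented $F\setminus M$ bookkeeping'' in the residual case is left entirely to intuition and does not rescue this.

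The paper's argument avoids this obstruction by \emph{not} optimizing over factors containing $F$. Instead it optimizes over minimally $m$-tree-connected factors $T_0$ containing only $M$, subject to $d_{T_0}(u)\le d_T(u)$ on the set $A=\{v:d_M(v)<m\}$, and then simply sets $H=T_0\cup F$. For vertices with $d_M(v)=m$ the bound is immediate from $d_H\le d_T+d_F-d_M$; for $v\in A$ one shows $|E_F(v)\cap E(T_0)|\ge\min\{m,d_F(v)\}$ by the swap $T_0\mapsto T_0-vy+vx$, which is now legitimate because $T_0$ itself is $m$-sparse, $Q$ is taken inside $T_0$, and the only degree constraint to preserve is at vertices of $A$---where maximality of $M$ guarantees the other endpoint $x$ of $vx\in E(F)\setminus E(M)$ lies outside $A$. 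Your framing loses exactly this separation between the $m$-sparse object being modified and the factor $F$ being adjoined at the end.
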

\begin{proof}
{Let $A=\{v\in V(G):d_M(v)< m\}$.
According to the maximality of $M$, the vertex set $A$ must be an independent set
of $F\setminus E(M)$. Otherwise, we can insert a new edge of $F\setminus E(M)$ into $M$ to expand it to a larger factor with maximum degree at most $m$ which is a
contradiction. 
Define $G_0=T\cup F$ and let $\mathcal{A}$ be the set of all $m$-tree-connected factors $T'_0$ of $G_0$ containing $M$ such that $d_{T'_0}(u)\le d_T(u)$ for all $u\in A$. 
 Note that $\mathcal{A}$ is nonempty, because $T\in \mathcal{A}$.
Consider $T_0\in \mathcal{A}$ with the maximum $|E(T_0)\cap E(F)|$ and let $H=T_0\cup F$. 
We claim that $H$ is the desired factor that we are looking for.
Let $v\in V(H)$.
If $d_{M}(v)= m$, then 
$$d_{H}(v)\le d_{G_0}(v) \le d_{T}(v)+d_F(v)-d_{M}(v)= d_T(v)+ d_F(v)-m.$$
So, suppose $v\in A$.
Define $F_0$ to be the factor of $F$ with $E(F_0)=E(F)\cap E(T_0)$.
To complete the proof, we are going to show that $d_{F_0}(v)\ge \min\{m, d_F(v)\}$. More precisely, this implies that
$$d_{H}(v) = d_{T_0}(v)+d_F(v)-d_{F_0}(v) \le d_{T}(v)+d_F(v)-\min\{m, d_F(v)\} = d_T(v)+\max\{0, d_F(v)-m\}.$$
Suppose, to the contrary, that $d_{F_0}(v)< \min\{m, d_F(v)\}$.
Pick $vx\in E(F)\setminus E(F_0)$ so that $vx\notin E(T_0)\cup E(M)$. 
Let $Q$ be a minimal $m$-tree-connected subgraph of $T_0$ containing $v$ and $x$. 
Since $Q$ is $m$-edge-connected, 
$d_Q (v) \ge m$.
On the other hand, $d_{F_0}(v) <m$.
Thus there exists an edge $vy \in E(Q)\setminus E(F_0)$ so that $vy\in E(T_0)\setminus E(F)$
(we might have $x = y$).
Define $T'_0=T_0-vy+vx$.
By Corollary~\ref{cor:xGy-exchange:tree-connected}, the graph $T'_0$
is still $m$-tree-connected. Note that $T'_0$ contains the edges of $M$, because $vy\notin E(M)$.
\begin{figure}[h]
{\centering
\includegraphics[scale=.95]{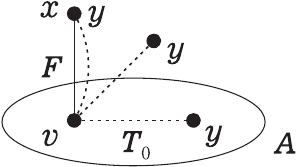}
\caption{An example showing all possibilities of the vertices $y$, $v$, and $x$.}
\label{Fig:12}
}\end{figure}
According to this construction, 
$d_{T'_0}(u)=d_{T_0}(u)$ for all $u\in V(G)\setminus \{x,y\}$. 
Moreover, $d_{T'_0}(x)=d_{T_0}(x)+1$ and $d_{T'_0}(y)=d_{T_0}(y)-1$ when $x\neq y$, and 
$d_{T'_0}(x)=d_{T_0}(x)$ when $x=y$.
Since $vx\in E(F)\setminus E(M)$ and $v\in A$, 
we must have $x\not \in A$.
Therefore, $d_{T_0'}(u)\le d_{T_0}(u)\le d_{T}(u)$ for all $u\in A$.
Since $|E(T_0')\cap E(F)| > |E(T_0)\cap E(F)|$, we derive a contradiction to the maximality of $T_0$, as desired.
}\end{proof}
The following lemma establishes a simple but important property of minimally $m$-tree-connected graphs.
\begin{lem}\label{lem:minimally,m-tree-connected}
{Let $H$ be a minimally $m$-tree-connected graph. If $ S\subseteq V(H)$, then
$$\Omega_m(H\setminus S)= \sum_{v\in S}(d_H(v)-m)+m-e_H(S).$$
}\end{lem}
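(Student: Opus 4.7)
The plan is to exploit the defining identity $|E(H)| = m(|V(H)|-1)$ of a minimally $m$-tree-connected graph together with the fact that such an $H$ is automatically $m$-sparse (since $m$-tree-connectedness plus the edge-count equality forces sparsity). Let $P$ denote the partition of $V(H)\setminus S$ induced by the $m$-tree-connected components of $H\setminus S$, so that by definition $\Omega_m(H\setminus S) = m|P| - e_{H\setminus S}(P)$. The key device is to form the refined partition $P^{\ast} = P \cup \{\{v\} : v \in S\}$ of the full vertex set $V(H)$, and to evaluate $e_H(P^{\ast})$ in two independent ways.

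For each block $A\in P$, the induced subgraph $H[A]$ equals $(H\setminus S)[A]$, which is $m$-tree-connected by the choice of $P$; on the other hand $H[A]$ inherits $m$-sparsity from $H$, so $H[A]$ is minimally $m$-tree-connected and thus $e_H(A) = m(|A|-1)$. The singleton blocks contribute no internal edges, so
\[
\sum_{A\in P^{\ast}} e_H(A) \;=\; \sum_{A\in P} m(|A|-1) \;=\; m\bigl(|V(H)|-|S|-|P|\bigr).
\]
Subtracting this from $|E(H)| = m(|V(H)|-1)$ yields $e_H(P^{\ast}) = m(|P^{\ast}|-1) = m(|P|+|S|-1)$.

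The second count simply compares $e_H(P^{\ast})$ with $e_{H\setminus S}(P)$: the extra crossing edges of $P^{\ast}$ are precisely those edges of $H$ with at least one end in $S$, and the standard degree-sum identity gives this number as $\sum_{v\in S} d_H(v) - e_H(S)$. Combining the two expressions,
\[
e_{H\setminus S}(P) \;=\; e_H(P^{\ast}) - \sum_{v\in S} d_H(v) + e_H(S) \;=\; m(|P|+|S|-1) - \sum_{v\in S} d_H(v) + e_H(S),
\]
and substituting into $\Omega_m(H\setminus S) = m|P| - e_{H\setminus S}(P)$ collapses to $\sum_{v\in S}(d_H(v)-m) + m - e_H(S)$, as required.

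The only point that requires any care is the boundary case $S = V(H)$, where $H\setminus S$ is the null graph and $\Omega_m$ is declared to be $0$; a direct check using $2|E(H)| = \sum_v d_H(v)$ and $|E(H)| = m(|V(H)|-1)$ shows the right-hand side also equals $0$. The rest of the argument is bookkeeping, and the main (mild) obstacle is just making sure that every block of $P^{\ast}$ is legitimately minimally $m$-tree-connected so that $e_H(P^{\ast})$ lands exactly on $m(|P^{\ast}|-1)$; this is precisely where both halves of ``minimally $m$-tree-connected'' (tree-connectedness of the components of $H\setminus S$, and global $m$-sparsity of $H$) are used together.
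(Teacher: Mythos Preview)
Your proof is correct and follows essentially the same approach as the paper's: both form the refined partition $P^{\ast} = P \cup \{\{v\}:v\in S\}$, compute $e_H(P^{\ast})$ in two ways (once via the degree-sum identity over $S$, once via $|E(H)|=m(|V(H)|-1)$ together with $e_H(A)=m(|A|-1)$ for each $A\in P$), and equate. You are simply more explicit than the paper about why $e_H(A)=m(|A|-1)$ holds (tree-connectedness of the component plus inherited $m$-sparsity) and about the null boundary case $S=V(H)$.
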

\begin{proof}
{Let $P$ be the partition of $V(H)\setminus S$ obtained from the $m$-tree-connected components of $H\setminus S$.
Obviously, $e_H( P\cup \{\{v\}:v\in S\})=\sum_{v\in S}d_H(v)\,-e_H(S)+e_{H\setminus S}(P)$.
On the other hand, $e_H( P\cup \{\{v\}:v\in S\})=m(|P|+|S|-1)$, since
 $|E(H)|=m(|V(H)|-1)$ and for any $A\in P$, $e_H(A)= m(|A|-1)$.
Therefore, we must have
$\Omega_m(H\setminus S)=m|P|-e_{H\setminus S}(P)= \sum_{v\in S}(d_H(v)-m)+m-e_H(S)$. Hence the lemma holds.
}\end{proof}
The following theorem is essential in this section.
\begin{thm}\label{thm:sufficient}
Let $G$ be a graph with $X\subseteq V(G)$ and with a factor $F$. 
Let $f$ be a positive integer-valued function on $X$.
If for all $S\subseteq X$,
$$\Omega_m(G\setminus S)\le \sum_{v\in S}\big(f(v)-2m\big)+m+\Omega_m(G[ S]),$$
then $G$ has an $m$-tree-connected factor $H$ containing $F$ such that for each $v\in X$,
 $d_H(v)\le f(v)+\max\{0, d_F(v)-m\}$.
\end{thm}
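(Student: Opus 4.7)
The plan is to reduce, via Lemma~\ref{lem:maximal-matching}, to finding an $m$-tree-connected factor $T$ containing a maximal subfactor $M$ of $F$ with $\Delta(M)\le m$ and satisfying $d_T(v)\le f(v)$ for every $v\in X$; then Lemma~\ref{lem:maximal-matching} will produce the desired $H$ with $d_H(v)\le d_T(v)+\max\{0,d_F(v)-m\}\le f(v)+\max\{0,d_F(v)-m\}$ on $X$. Note first that $S=\emptyset$ in the hypothesis forces $\Omega_m(G)\le m$, hence $\Omega_m(G)=m$ and $G$ is $m$-tree-connected. The chosen $M$ is $m$-sparse because every graph with maximum degree at most $m$ is $m$-sparse, so some minimally $m$-tree-connected factor of $G$ contains $M$ (take an $m$-tree-connected factor containing $M$ of minimum edge count and invoke Corollary~\ref{cor:basictool:minimaly-tree-connected:M}).

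To locate $T$, extend $f$ to a positive integer-valued $h$ on $V(G)$ by setting $h(v)=f(v)$ on $X$ and $h(v)=|E(G)|+1$ off $X$ (an effective $+\infty$). Among minimally $m$-tree-connected factors of $G$ containing $M$, choose $T$ minimizing $te(T,h)$. Applying Theorem~\ref{thm:preliminary:structure} with this $h$ yields a subset $S\subseteq V(G)$ with $\Omega_m(G\setminus S)=\Omega_m(T\setminus S)$, with $S\supseteq\{v:d_T(v)>h(v)\}$, and with $d_T(v)\ge h(v)$ for all $v\in S$. The enormous values of $h$ off $X$ block any vertex outside $X$ from lying in the sets $V_n$ built in that proof, so $S\subseteq X$; this is the role of $X$ in the statement.

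Now combine Lemma~\ref{lem:minimally,m-tree-connected}, which rewrites $\Omega_m(T\setminus S)=\sum_{v\in S}(d_T(v)-m)+m-e_T(S)$, with the hypothesis (legitimately applied because $S\subseteq X$). Cancelling $m$ and rearranging yields
$$\sum_{v\in S}\bigl(d_T(v)-f(v)\bigr)\le \Omega_m(G[S])+e_T(S)-m|S|.$$
The crux is to show the right-hand side is at most $0$. Let $P$ be the partition of $S$ into the $m$-tree-connected components of $G[S]$, so that $\Omega_m(G[S])=m|P|-e_{G[S]}(P)$; applying the $m$-sparsity of $T$ on each part $A\in P$ gives $e_T(A)\le m(|A|-1)$, whence $e_T(S)\le e_{G[S]}(P)+m|S|-m|P|$. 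Substituting these two identities proves the non-positivity. Since condition~(iii) of Theorem~\ref{thm:preliminary:structure} makes every term of the left-hand sum non-negative and the sum is at most $0$, each $d_T(v)=f(v)$ on $S$, and then condition~(ii) rules out any $v\in X$ with $d_T(v)>f(v)$. Thus $T$ has $d_T(v)\le f(v)$ for all $v\in X$, and Lemma~\ref{lem:maximal-matching} delivers $H$. The principal obstacle I expect is precisely this sharp non-positivity estimate: the naive bound $e_T(S)\le m(|S|-1)$ is too loose, and one must refine it using the partition-connectivity structure of $G[S]$ so that it telescopes against $\Omega_m(G[S])$.
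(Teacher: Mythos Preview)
Your proposal is correct and follows essentially the same route as the paper: extend $f$ off $X$ to a huge value, pick a minimally $m$-tree-connected factor containing $M$ of minimum total excess, apply Theorem~\ref{thm:preliminary:structure} and Lemma~\ref{lem:minimally,m-tree-connected}, bound $e_T(S)\le m|S|-\Omega_m(G[S])$, deduce zero total excess, and finish with Lemma~\ref{lem:maximal-matching}. The only notable difference is cosmetic---the paper asserts $e_H(S)\le m|S|-\Omega_m(G[S])$ without justification, whereas you spell out the telescoping argument via the $m$-sparsity of $T$ on the parts of the $m$-tree-connected partition of $G[S]$, and the paper uses $d_G(v)+1$ rather than $|E(G)|+1$ for the extension of $f$ off $X$.
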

\begin{proof}{Note that $G$ must automatically be $m$-tree-connected, because of $\Omega_m(G\setminus \emptyset) \le m$.
For each $v\in V(G)\setminus X$, define $f(v)=d_G(v)+1$. Choose a maximal factor $M$ of $F$ satisfying $\Delta(M)\le m$. 
Let $H$ be a minimally $m$-tree-connected factor of $G$ containing $M$ with the minimum total excess from $f$.
Define $S$ to be a subset of $V (G)$ with the properties described in Theorem~\ref{thm:preliminary:structure}.
If $v\in V(G)\setminus X$, then $d_{H}(v)\le d_G(v)<f(v)$. This implies that $S\subseteq X$.
By Lemma~\ref{lem:minimally,m-tree-connected} and Theorem~\ref{thm:preliminary:structure},
\begin{equation*}\label{main:eq:A:1}
\sum_{v\in S} f(v) +te(H,f)= \sum_{v\in S} d_H(v) = 
\Omega_m(H\setminus S)+m|S| -m+ e_{H}(S),
\end{equation*}
and hence
$$\sum_{v\in S} f(v) +te(H,f)= \Omega_m(G\setminus S)+m|S| -m+ e_{H}(S).
$$
Since $H[S]$ is $m$-sparse, by Corollaries~\ref{cor:maximum-partition} and~\ref{cor:comparing:e}, one can conclude that
$e_{H}(S) = m|S|-\Omega_m(H[S]) \le m|S|-\Omega_m(G[S])$.
By the assumption, we therefore have 
\begin{equation*}\label{main:eq:A:2}
te(H,f) \le \Omega_m(G\setminus S)- \sum_{v\in S}\big(f(v)-2m\big)-m-\Omega_m(G[S])\le 0.
\end{equation*}
Hence $te(H,f) = 0$. 
By Lemma~\ref{lem:maximal-matching}, the factor $F$ itself can be extended to an $m$-tree-connected connected factor $H$ 
such that for each vertex $v$,
$d_H(v)\le f(v)+\max\{0, d_F(v)-m\}$.
 Hence the theorem is proved.
}\end{proof}
The next corollary gives a sufficient condition, similar to toughness condition, that guarantees
the existence of a highly tree-connected factor with bounded maximum degree. 
\begin{cor}\label{cor:similar:toughness}
Let $G$ be an $m$-tree-connected graph with a factor $F$. 
Let $f$ be a positive integer-valued function on $V(G)$.
If for all $S\subseteq V(G)$,
$$\Omega_m(G\setminus S)\le \sum_{v\in S}\big(f(v)-2m\big)+2m,$$
then $G$ has an $m$-tree-connected factor $H$ containing $F$ such that for each vertex $v$,
 $d_H(v)\le f(v)+\max\{0,d_F(v)-m\}$.
\end{cor}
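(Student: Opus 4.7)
{The strategy is to reduce the statement directly to Theorem~\ref{thm:sufficient} by taking $X=V(G)$. Thus the whole task is to verify that the weaker looking hypothesis of the corollary actually forces the hypothesis of Theorem~\ref{thm:sufficient}; once this is done, Theorem~\ref{thm:sufficient} supplies the desired $m$-tree-connected factor $H$ containing $F$ with $d_H(v)\le f(v)+\max\{0,d_F(v)-m\}$ for every vertex $v$, which is exactly the conclusion sought.

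The verification splits into two cases. If $S=\emptyset$, then the hypothesis of Theorem~\ref{thm:sufficient} reads $\Omega_m(G)\le m+\Omega_m(G[\emptyset])=m+\Omega_m(K_0)=m$, and this holds with equality because $G$ is assumed to be $m$-tree-connected. If $S\neq\emptyset$, then $G[S]$ is nonnull, so by the inequality $m\le \Omega_m(G[S])$ recorded in Subsection~\ref{subsec:Basic tools} (which follows from Corollary~\ref{lem:non-trivial-component} applied to the contracted graph of $m$-tree-connected components of $G[S]$), we obtain
$$\sum_{v\in S}\bigl(f(v)-2m\bigr)+2m\le \sum_{v\in S}\bigl(f(v)-2m\bigr)+m+\Omega_m(G[S]),$$
and hence the corollary's hypothesis yields exactly the hypothesis of Theorem~\ref{thm:sufficient}. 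Applying that theorem concludes the argument.

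There is essentially no obstacle here beyond bookkeeping; the only conceptually relevant ingredient is the uniform lower bound $\Omega_m(G[S])\ge m$ for nonempty $S$, which packages the cost of separating $S$ into tree-connected components, together with the observation that the case $S=\emptyset$ is already handled by the standing $m$-tree-connectivity of $G$.
}
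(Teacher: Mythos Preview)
Your proof is correct and follows essentially the same approach as the paper: verify the hypothesis of Theorem~\ref{thm:sufficient} by handling $S=\emptyset$ via the $m$-tree-connectivity of $G$, and $S\neq\emptyset$ via the lower bound $\Omega_m(G[S])\ge m$, then apply Theorem~\ref{thm:sufficient} with $X=V(G)$.
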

\begin{proof}
{Since G is $m$-tree-connected, it is obvious that $\Omega_m(G\setminus \emptyset) = m$.
Let $S$ be a nonempty subset of $V(G)$. Since $\Omega_m(G[S])\ge m $,
we must have $\Omega_m(G\setminus S)\le \sum_{v\in S}(f(v)-2m)+2m\le \sum_{v\in S}(f(v)-2m)+m+\Omega_m(G[ S])$.
Now, it is enough to apply Theorem~\ref{thm:sufficient}.
}\end{proof}
\begin{cor}\label{cor:X-independent}
{Let $G$ be a graph with an independent set $X\subseteq V(G)$ and with a factor $F$.
Let $f$ be a positive integer-valued function on $X$.
 If for all $S\subseteq X$, 
$$\Omega_m(G\setminus S)\le \sum_{v\in S}(f(v)-m)+m,$$ 
then every factor $F$ can be extended to an $m$-tree-connected factor $H$ such that for each $v\in X$,
$d_H(v)\le f(v)+\max\{0, d_F(v)-m\}.$
}\end{cor}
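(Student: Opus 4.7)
The plan is to deduce this corollary directly from Theorem~\ref{thm:sufficient} by a simple algebraic reduction that exploits the independence of $X$. The key observation is that for every $S\subseteq X$, the induced subgraph $G[S]$ is edgeless, so its $m$-tree-connected components are exactly the singletons $\{v\}$ for $v\in S$. This is because a one-vertex graph automatically satisfies the Nash-Williams--Tutte criterion (Theorem~\ref{thm:Nash-Williams,Tutte}): the only partition of its vertex set is the trivial one, and $0\ge m(1-1)=0$ holds. Consequently, applying the definition $\Omega_m(G_0)=m|P|-e_{G_0}(P)$ with $P=\{\{v\}:v\in S\}$ and $e_{G[S]}(P)=0$ yields
\[
\Omega_m(G[S]) \;=\; m|S|,
\]
where the degenerate case $S=\emptyset$ is covered by the convention $\Omega_m(K_0)=0=m\cdot 0$ stated in the Introduction.

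With this identity in hand, I would simply rewrite the hypothesis of the corollary in a form suitable for direct application of Theorem~\ref{thm:sufficient}. For any $S\subseteq X$,
\[
\sum_{v\in S}(f(v)-m)+m \;=\; \sum_{v\in S}(f(v)-2m)+m+m|S| \;=\; \sum_{v\in S}(f(v)-2m)+m+\Omega_m(G[S]).
\]
Therefore the assumed inequality $\Omega_m(G\setminus S)\le \sum_{v\in S}(f(v)-m)+m$ is literally the hypothesis of Theorem~\ref{thm:sufficient} restricted to subsets of $X$. Invoking that theorem produces an $m$-tree-connected factor $H$ containing $F$ with $d_H(v)\le f(v)+\max\{0,d_F(v)-m\}$ for every $v\in X$, which is exactly what is claimed.

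There is essentially no obstacle beyond verifying the identity $\Omega_m(G[S])=m|S|$ for edgeless $G[S]$, which as noted above follows from the Nash-Williams--Tutte criterion applied vertex by vertex. Thus the proof is a one-line reduction: the independence hypothesis on $X$ makes the $\Omega_m(G[S])$ term on the right-hand side of Theorem~\ref{thm:sufficient}'s hypothesis collapse to $m|S|$, absorbing the extra $-m$ per vertex, and the stronger-looking condition of the corollary is equivalent to the condition of Theorem~\ref{thm:sufficient} in this setting.
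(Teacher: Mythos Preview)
Your proposal is correct and follows essentially the same approach as the paper: observe that $\Omega_m(G[S])=m|S|$ whenever $S\subseteq X$ is independent, rewrite the hypothesis accordingly, and invoke Theorem~\ref{thm:sufficient}. The paper's proof is just a terser rendition of exactly this argument.
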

\begin{proof}
{Let $S$ be a subset of $X$. Since $X$ is an independent set, we must have $\Omega_m(G[S])=m|S|$ which implies that
$\Omega_m(G\setminus S)\le \sum_{v\in S}\big(f(v)-m\big)+m= \sum_{v\in S}\big(f(v)-2m\big)+m+\Omega_m(G[S])$.
Now, it is enough to apply Theorem~\ref{thm:sufficient}.
}\end{proof}
%
%
%
%
%
%
%
\section{Highly edge-connected graphs}
\label{sec:edge-connected-graphs}
\subsection{Edge-connected and tree-connected graphs}
\label{sec:edge-connected}
Highly edge-connected graphs are natural candidates for graphs satisfying the assumptions of Theorem~\ref{thm:sufficient}.
We examine them in this subsection, beginning with
 the following extended version of Lemma 4.9 in~\cite{ClosedWalks}.
\begin{lem}\label{lem:high-edge-connectivity:Omega}
{Let $G$ be a graph with $ S\subseteq V(G)$. Then
$$\Omega_m(G\setminus S)\le
 \begin{cases}
\sum_{v\in S}\frac{m (d_G(v)-2m)}{k}+\frac{2m}{k}\Omega_m(G[S]),	&\text{ if $G$ is $k$-edge-connected, $k\ge 2m$, and $S\neq \emptyset$};\\ 
 \sum_{v\in S}\big(\frac{m(d_G(v)-m)}{k}-m\big)+m+\frac{m}{k}\Omega_m(G[S]),	&\text{if $G$ is $k$-tree-connected and $k\ge m$}.
\end {cases}$$
}\end{lem}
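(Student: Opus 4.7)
The plan is to treat the two cases in parallel, starting from a common setup: let $P$ be the partition of $V(G)\setminus S$ given by the $m$-tree-connected components of $G\setminus S$, and let $Q$ be the analogous partition of $S$ obtained from $G[S]$. Then $\Omega_m(G\setminus S)=m|P|-e_{G\setminus S}(P)$ and $\Omega_m(G[S])=m|Q|-e_{G[S]}(Q)$. Writing $\partial(S)$ for the number of $G$-edges between $S$ and $V(G)\setminus S$, I will use the identity $\sum_{v\in S}d_G(v)=2e_G(S)+\partial(S)$ together with the auxiliary bound $e_G(S)\ge m|S|-\Omega_m(G[S])$. This auxiliary bound holds because every $A\in Q$ induces an $m$-tree-connected subgraph, which carries at least $m(|A|-1)$ edges, so the interior edges of the parts of $Q$ contribute at least $m(|S|-|Q|)$ to $e_G(S)$; adding the $e_{G[S]}(Q)=m|Q|-\Omega_m(G[S])$ crossing edges of $Q$ yields the estimate.

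For the $k$-edge-connected case, since each $A\in P$ is a proper nonempty subset of $V(G)$ (we use $S\ne\emptyset$ here), we have $d_G(A)\ge k$. Summing over $A\in P$ and double-counting gives $2e_{G\setminus S}(P)+\partial(S)\ge k|P|$. Because $k\ge 2m$, i.e.\ $k/m\ge 2$, this yields
$$\frac{k}{m}\Omega_m(G\setminus S)=k|P|-\frac{k}{m}e_{G\setminus S}(P)\le k|P|-2e_{G\setminus S}(P)\le \partial(S).$$
Finally $\partial(S)=\sum_{v\in S}d_G(v)-2e_G(S)\le \sum_{v\in S}d_G(v)-2m|S|+2\Omega_m(G[S])$ by the auxiliary bound, and dividing by $k/m$ gives the stated inequality.

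For the $k$-tree-connected case, I would apply Theorem~\ref{thm:Nash-Williams,Tutte} to the partition $\mathcal{P}=P\cup\{\{v\}:v\in S\}$ of $V(G)$, producing
$$e_{G\setminus S}(P)+e_G(S)+\partial(S)\ge k(|P|+|S|-1).$$
After multiplying the desired inequality by $k/m$ and substituting for $\Omega_m(G\setminus S)$, $\Omega_m(G[S])$, and $\sum_{v\in S}d_G(v)$, the target becomes (with $E_0^Q:=e_G(S)-e_{G[S]}(Q)$)
$$k|P|-\frac{k}{m}e_{G\setminus S}(P)+m|S|+k|S|-k-m|Q|-e_{G[S]}(Q)-2E_0^Q\le \partial(S).$$
Subtracting from the left side the Nash-Williams/Tutte lower bound $k(|P|+|S|-1)-e_{G\setminus S}(P)-e_G(S)$ for $\partial(S)$, the slack simplifies to $e_{G\setminus S}(P)(1-k/m)+m(|S|-|Q|)-E_0^Q$, a sum of two nonpositive terms (the first because $k\ge m$, the second because $E_0^Q\ge m(|S|-|Q|)$), so the inequality closes.

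The main obstacle I expect is choosing the correct partition in the $k$-tree-connected case. Applying Nash-Williams/Tutte instead to the coarser partition $P\cup Q$ yields a bound featuring $-\frac{m}{k}\Omega_m(G[S])$, which has the wrong sign; splitting $S$ into singletons and then using the stronger inequality $E_0^Q\ge m(|S|-|Q|)$ to re-aggregate is what produces the $+\frac{m}{k}\Omega_m(G[S])$ appearing on the right-hand side of the target. In contrast, the $k$-edge-connected case is essentially immediate once one observes that the hypothesis $k\ge 2m$ is precisely what lets $\frac{k}{m}$ absorb the coefficient $2$ arising from the double-counting identity $\sum_{A\in P}d_G(A)=2e_{G\setminus S}(P)+\partial(S)$.
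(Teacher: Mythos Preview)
Your argument is correct and follows essentially the same route as the paper: both proofs take $P$ to be the $m$-tree-connected component partition of $G\setminus S$, use the partition $P\cup\{\{v\}:v\in S\}$ together with either the edge-connectivity lower bound $\sum_{A\in P}d_G(A)\ge k|P|$ or the Nash-Williams--Tutte inequality, and then invoke the auxiliary estimate $e_G(S)\ge m|S|-\Omega_m(G[S])$ (your $E_0^Q\ge m(|S|-|Q|)$ is exactly this). The only cosmetic difference is that you verify the tree-connected case by computing slack after splitting $e_G(S)$ into the within-$Q$ and between-$Q$ contributions, whereas the paper first relaxes $m|P|-e_{G\setminus S}(P)$ to $m|P|-\tfrac{m}{k}e_{G\setminus S}(P)$ and then substitutes; the underlying inequalities are identical.
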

\begin{proof}
{Let $P$ be the partition of $V(G)\setminus S$ obtained from the $m$-tree-connected components of $G\setminus S$. 
Obviously, we have
$$e_G( P\cup \{\{v\}:v\in S\})=\sum_{v\in S}d_G(v)\,-e_G(S)+e_{G\setminus S}(P).$$
If $G$ is $k$-edge-connected and $S\neq \emptyset$, then
 there are at least $k$ edges of $G$ with exactly one end in $C$, for any
$C\in P$. 
Thus
$e_G( P\cup \{\{v\}:v\in S\})\ge k|P|-e_{G\setminus S}(P)+e_G(S)$
and so if $k\ge 2m$, then 
$$\Omega_m(G\setminus S)= m|P|-e_{G\setminus S}(P)\le m|P|-\frac{2m}{k}e_{G\setminus S}(P)\le
 \sum_{v\in S}\frac{md_G(v)}{k}\,-\frac{2m}{k}e_G(S).$$
When $G$ is $k$-tree-connected, we have $e_G( P\cup \{\{v\}:v\in S\})\ge k(|P|+|S|-1)$
 and so if $k\ge m$, then
$$\Omega_m(G\setminus S)= m|P|-e_{G\setminus S}(P)\le m|P|-\frac{m}{k}e_{G\setminus S}(P)\le 
\sum_{v\in S}\big(\frac{m d_G(v)}{k}-m\big)+m-\frac{m}{k}e_G(S).$$
By Corollary~\ref{cor:maximum-partition}, since $e_G(S) \ge m|S|-\Omega_m(G[S])$, these inequalities complete the proof.
}\end{proof}
Now, we are ready to
generalize Theorems~\ref{intro:thm:edge-connected} as mentioned in the abstract.
\begin{thm}\label{thm:main:result}
{Let $G$ be a graph with $X\subseteq V(G)$. 
Then every factor $F$ can be extended to an $m$-tree-connected factor $H$
such that for each~$v\in X$, 
$$d_H(v)\le \max\{0, d_F(v) - m\}+
 \begin{cases}
 \big\lceil \frac{m}{k}(d_G(v)-2m)\big \rceil+2m,	&\text{if $G$ is $k$-edge-connected and $k\ge 2m$};\\ 
 \big\lceil \frac{m}{k}(d_G(v)-m)\big\rceil+m,	&\text{if $G$ is $k$-tree-connected and $k\ge m$};\\
 \big\lceil \frac{m}{k}d_G(v)\big \rceil+m,	&\text{if $G$ is $k$-edge-connected, $k\ge 2m$, and $X$ is independent};\\ 
 \big\lceil \frac{m}{k}d_G(v)\big\rceil,	&\text{if $G$ is $k$-tree-connected, $k\ge m$, and $X$ is independent}.
\end {cases}$$
Furthermore, for an arbitrary given vertex $u$, the upper bound can be reduced to
 $\lfloor \frac{m}{k}d_G(u)\rfloor + \max\{0, d_F(u) - m\}$.
}\end{thm}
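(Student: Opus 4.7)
The plan is to apply Theorem~\ref{thm:sufficient} in the first two cases (where $X$ is arbitrary) and its specialization Corollary~\ref{cor:X-independent} in the last two (where $X$ is independent), taking $F=M$ and $f(v)$ on $X$ equal to the claimed upper bound. Since every vertex of a $k$-edge-connected or $k$-tree-connected graph satisfies $d_G(v)\ge k$, the function $f$ is positive integer-valued in each of the four cases. To verify the hypothesis of Theorem~\ref{thm:sufficient} for $S\subseteq X$, I would feed the upper bound on $\Omega_m(G\setminus S)$ supplied by Lemma~\ref{lem:high-edge-connectivity:Omega} into the inequality and exploit the estimates $\lceil\frac{m}{k}(d_G(v)-2m)\rceil\ge \frac{m}{k}(d_G(v)-2m)$ and $\frac{2m}{k}\le 1$ in case~1, with the obvious analogue in case~2. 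The instance $S=\emptyset$ reduces to $\Omega_m(G)\le m$, which holds because $k$-edge-connectivity with $k\ge 2m$, and $k$-tree-connectivity with $k\ge m$, both force $m$-tree-connectivity via Theorem~\ref{thm:Nash-Williams,Tutte}. When $X$ is independent, $G[S]$ is edgeless and $\Omega_m(G[S])=m|S|$, which reduces the verification to the simpler hypothesis of Corollary~\ref{cor:X-independent}. In every case the resulting factor $H\supseteq M$ satisfies $d_H(v)\le f(v)+\max\{0,d_M(v)-m\}=f(v)$ for $v\in X$, since $\Delta(M)\le m$.

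For the \emph{Furthermore} assertion the plan is to enlarge $X$ to $X\cup\{u\}$ and redefine $f(u)=\lfloor\frac{m}{k}d_G(u)\rfloor$, which is still a positive integer because $d_G(u)\ge k\ge m$. The case $u\notin S$ is unchanged, so only the case $u\in S$ is new. Writing $B$ for the real upper bound on $\Omega_m(G\setminus S)$ supplied by Lemma~\ref{lem:high-edge-connectivity:Omega} and $A$ for the integer right-hand side of the hypothesis of Theorem~\ref{thm:sufficient} under the modified $f$, a term-by-term computation yields
$$A-B=\sum_{v\in S\setminus\{u\}}\left(\left\lceil\frac{m(d_G(v)-2m)}{k}\right\rceil-\frac{m(d_G(v)-2m)}{k}\right)-\left\{\frac{m}{k}d_G(u)\right\}+\left(1-\frac{2m}{k}\right)\bigl(\Omega_m(G[S])-m\bigr)$$
in case~1, with analogous identities in cases 2--4. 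Each ceiling-slack in the sum is non-negative, the factor $(1-\frac{2m}{k})(\Omega_m(G[S])-m)$ is non-negative because $k\ge 2m$ and $\Omega_m(G[S])\ge m$, and the only potentially negative contribution is $-\{\frac{m}{k}d_G(u)\}>-1$. Hence $A-B>-1$. Since $\Omega_m(G\setminus S)$ is an integer with $\Omega_m(G\setminus S)\le B$ and $A$ is an integer with $A>B-1$, we conclude $\Omega_m(G\setminus S)\le\lfloor B\rfloor\le A$, which is exactly what Theorem~\ref{thm:sufficient} requires.

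The main obstacle is precisely this last step: the ceiling-slack at the vertices of $S\setminus\{u\}$ and the non-negative term $(1-\frac{2m}{k})(\Omega_m(G[S])-m)$ are not individually enough to absorb the fractional part arising at $u$, so the argument must fundamentally rely on the integrality of $\Omega_m(G\setminus S)$ together with the elementary observation that an integer $A$ with $A>B-1$ automatically satisfies $A\ge\lfloor B\rfloor$. Apart from this fractional-part bookkeeping, the four cases reduce mechanically to substituting Lemma~\ref{lem:high-edge-connectivity:Omega} into the hypothesis of Theorem~\ref{thm:sufficient} (or Corollary~\ref{cor:X-independent}), and no further difficulty arises.
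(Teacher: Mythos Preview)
Your proposal is correct and follows essentially the same route as the paper: feed the bound from Lemma~\ref{lem:high-edge-connectivity:Omega} into the hypothesis of Theorem~\ref{thm:sufficient} (respectively Corollary~\ref{cor:X-independent}), and use the integrality of $\Omega_m(G\setminus S)$ to absorb the fractional loss at the distinguished vertex~$u$. The paper simply sets $f(u)=\lfloor\frac{m}{k}d_G(u)\rfloor$ from the outset and writes the resulting inequality as a strict one $\Omega_m(G\setminus S)<A+1$, which is your $A-B>-1$ in disguise; your two-stage presentation (first without $u$, then with) is only an organisational difference. One small caveat: in cases~3--4 you should take $u\in X$ rather than ``enlarge $X$ to $X\cup\{u\}$'', since otherwise $X\cup\{u\}$ need not be independent; this is also implicit in the paper's formulation, as the ``Furthermore'' clause only makes sense for $u\in X$.
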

\begin{proof}
{Let $S$ be a subset of $X$. If $S$ is empty, then $\Omega_m(G \setminus S) \le m$, 
since $G$ is $m$-tree-connected. Assume that $S$ is not empty.
Hence $\Omega_m(G[S]) \ge m$ by Theorem~\ref{thm:maximum-partition}.
If $G$ is $k$-edge-connected and $k\ge 2m$, then
by Lemma~\ref{lem:high-edge-connectivity:Omega}, 
we have 
$$\Omega_m(G\setminus S)\le \sum_{v\in S}\frac{m}{k}(d_G(v)-2m)+\frac{2m}{k}\Omega_m(G[S])<1+ 
 \sum_{v\in S}(f(v)-2m)+m+\Omega_m(G[S]),$$
where $f(u)= \lfloor \frac{m}{k}d_G(u)\rfloor$ and
 $f(v)= \lceil\frac{m(d_G(v)-2m)}{k}\rceil+2m$ for all $v\in V(G)\setminus \{u\}$.
If $G$ is $k$-tree-connected and $k\ge m$, then 
by Lemma~\ref{lem:high-edge-connectivity:Omega}, 
we also have 
$$\Omega_m(G\setminus S)\le \sum_{v\in S}(\frac{m}{k}(d_G(v)-m)-m)+m+\frac{m}{k}\Omega_m(G[S])< 1+
\sum_{v\in S}(f(v)-2m)+m+\Omega_m(G[S]),$$
where $f(u)= \lfloor \frac{m}{k}d_G(u)\rfloor$ and $f(v)= \lceil\frac{m(d_G(v)-m)}{k}\rceil+m$ for all $v\in V(G)\setminus \{u\}$.
Thus the first two assertions follow from Theorem~\ref{thm:sufficient}.
Now, suppose that $X$ is an independent set.
If $G$ is $k$-edge-connected and $k\ge 2m$, then
by Lemma~\ref{lem:high-edge-connectivity:Omega}, we have 
$$\Omega_m(G\setminus S)\le \sum_{v\in S}\frac{m}{k}d_G(v)<1+ 
\sum_{v\in S}(f(v)-m)+m,$$
where $f(u)= \lfloor \frac{m}{k}d_G(u)\rfloor$ and $f(v)= \lceil\frac{m d_G(v)}{k}\rceil+m$ for all $v\in X\setminus \{u\}$.
If $G$ is $k$-tree-connected and $k\ge m$, then 
by Lemma~\ref{lem:high-edge-connectivity:Omega}, we also have 
$$\Omega_m(G\setminus S)\le \sum_{v\in S}(\frac{m}{k}d_G(v)-m)+m< 1+
\sum_{v\in S}(f(v)-m)+m,$$
where $f(u)= \lfloor \frac{m}{k}d_G(u)\rfloor$ and $f(v)= \lceil\frac{m d_G(v)}{k}\rceil$ for all $v\in X\setminus \{u\}$.
Thus the second two assertions follow from Corollary~\ref{cor:X-independent}.
}\end{proof}
A generalization of Corollary~1 in \cite{SpanningTreeEulerian-2015} is given in the following corollary.
\begin{cor}
{ Every $(r-2m)$-edge-connected $r$-regular graph $G$ with $r\ge 4m$
 admits an $m$-tree-connected $\{m,m+1,\ldots,3m\}$-factor.
}\end{cor}
\begin{proof}
{Apply Theorem~\ref{thm:main:result} with $k=r-2m$.
}\end{proof}
\begin{cor}\label{cor:New}
{Every $2m$-edge-connected graph $G$ has an $m$-tree-connected factor $H$ 
such that for each vertex~$v$, 
$$d_H(v)\le \big\lceil \frac{d_G(v)}{2}\big\rceil+m.$$
Furthermore, for an arbitrary given vertex $u$, the upper bound can be reduced to $\lfloor \frac{d_G(u)}{2}\rfloor$.
}\end{cor}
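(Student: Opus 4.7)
The plan is to derive this corollary as a direct specialization of Theorem~\ref{thm:main:result} with $k=2m$. First I would verify the hypothesis that $G$ is $m$-tree-connected: the Nash--Williams/Tutte criterion (Theorem~\ref{thm:Nash-Williams,Tutte}) combined with the fact that a $2m$-edge-connected graph satisfies $e_G(P) \ge \tfrac{1}{2}\cdot 2m \cdot |P| \ge m(|P|-1)$ for every partition $P$ with $|P|\ge 2$ yields $m$-tree-connectedness, so Theorem~\ref{thm:main:result} applies.

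Next I would take the trivial factor $M$ (no edges, so $\Delta(M) = 0 \le m$) and $X = V(G)$, and invoke the first case of Theorem~\ref{thm:main:result} with $k = 2m$. This produces an $m$-tree-connected factor $H$ with
\[
d_H(v) \le \Bigl\lceil \tfrac{m}{2m}(d_G(v) - 2m)\Bigr\rceil + 2m
= \Bigl\lceil \tfrac{d_G(v)}{2} - m\Bigr\rceil + 2m
= \Bigl\lceil \tfrac{d_G(v)}{2}\Bigr\rceil + m,
\]
using that $m$ is an integer to pull it outside of the ceiling. This is the first claimed bound.

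For the sharpened bound at a prescribed vertex $u$, I would directly quote the ``furthermore'' clause of Theorem~\ref{thm:main:result}, which permits replacing the bound at $u$ by $\lfloor \tfrac{m}{k} d_G(u) \rfloor = \lfloor \tfrac{d_G(u)}{2} \rfloor$ when $k = 2m$. There is no real obstacle here: the entire argument is an arithmetic specialization of a previously proven theorem, and the only thing worth writing down carefully is the ceiling identity $\lceil (a - 2m)/2\rceil + 2m = \lceil a/2\rceil + m$ for integers $m$.
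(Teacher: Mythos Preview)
Your proposal is correct and takes essentially the same approach as the paper, which simply says ``Apply Theorem~\ref{thm:main:result} with $k=2m$.'' Your preliminary verification that $G$ is $m$-tree-connected is unnecessary (Theorem~\ref{thm:main:result} only requires $k$-edge-connectedness with $k\ge 2m$ in the relevant case, and the $m$-tree-connectedness is established inside its proof), but it does no harm; the rest is exactly the intended arithmetic specialization.
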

\begin{proof}
{Apply Theorem~\ref{thm:main:result} with $k=2m$.
}\end{proof}
\subsection{Alternative proofs for $2m$-edge-connected graphs}
In the following, we shall give two simpler proofs for Corollary~\ref{cor:New} inspired by the proofs that introduced in~\cite{BangJensen-Thomasse-Yeo-2003, MR1621287, Thomassen-2008-P3} for the special case $m=1$.
For this purpose, we need some well-known results.
Note that the second one implicitly appeared in~\cite{Bensmail-Harutyunyan-Le-Thomasse-2019} for $m=2,6$.
Moreover, two interesting developments of this corollary are given in~\cite{AHO}.
\begin{thm}{\rm (Mader~\cite{Mader-1978}, see Section 3 in~\cite{Ok-Thomassen-2017})}\label{thm:Mader}
{Let $G$ be a $2m$-edge-connected graph with $z\in V(G)$.
If $d_G(z)\ge 2m+2$, then 
 there are two edges $xz$ and $yz$ incident to $z$ such that after removing them and inserting a new edge $xy$ for the case $x\neq y$, 
the resulting graph is still $2m$-edge-connected.
}\end{thm}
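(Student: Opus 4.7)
The plan is to prove this classical edge-splitting theorem of Mader by contradiction using submodular uncrossing of cut functions. For $S\subseteq V(G)\setminus\{z\}$ write $d_G(S)$ for the number of edges of $G$ with exactly one end in $S$, and $d_z(S)$ for the number of edges $zv$ with $v\in S$. Recall that $d_G$ is symmetric and submodular on subsets of $V(G)\setminus\{z\}$, while $d_z$ is modular. The key observation is that lifting a pair $(xz,yz)$ (with multiplicity handled when $x=y$ and there are parallel edges) changes $d_G(S)$ to $d_G(S)-2$ precisely for those $S\subseteq V(G)\setminus\{z\}$ that contain both of $x,y$, and leaves all other cuts unchanged. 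Consequently, a pair $(xz,yz)$ is non-liftable in a $2m$-edge-connected manner if and only if there is a \emph{tight} set $S$, defined by $d_G(S)\le 2m+1$, with $\{x,y\}\subseteq S$. Assume for contradiction that no pair lifts; then every pair of $z$-edges admits a tight witness.

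For the uncrossing step I would pick a tight set $S^{*}$ maximizing $d_z(S^{*})$. Since $d_G(S^{*})\ge d_z(S^{*})$ and $d_G(S^{*})\le 2m+1<2m+2\le d_G(z)$, some $z$-edge must go to a vertex $y\notin S^{*}$; moreover $d_z(S^{*})\ge 2$ (as $S^{*}$ witnesses at least one pair), so there is a neighbor $x\in S^{*}$ of $z$. Applying the non-lifting hypothesis to the pair $(xz,yz)$ produces a tight witness $T\supseteq\{x,y\}$. Submodularity and modularity yield
\[
d_G(S^{*}\cap T)+d_G(S^{*}\cup T)\le d_G(S^{*})+d_G(T)\le 4m+2,
\]
\[
d_z(S^{*}\cap T)+d_z(S^{*}\cup T)=d_z(S^{*})+d_z(T),
\]
with $d_z(S^{*}\cup T)\ge d_z(S^{*})+1$ because $y\in T\setminus S^{*}$. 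In the generic situation $S^{*}\cup T\ne V(G)\setminus\{z\}$, the $2m$-edge-connectivity of $G$ forces $d_G(S^{*}\cap T),d_G(S^{*}\cup T)\ge 2m$; a short arithmetic check on the residue in $\{2m,2m+1\}$ then shows that $S^{*}\cup T$ is itself tight, directly contradicting the maximality of $d_z(S^{*})$.

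The hard part will be the borderline case $S^{*}\cup T=V(G)\setminus\{z\}$, where $d_G(S^{*}\cup T)=d_G(z)\ge 2m+2$ and the submodular inequality no longer certifies tightness of the union. Here the bound saturates to force $d_G(z)=2m+2$, $d_G(S^{*})=d_G(T)=2m+1$, and $d_G(S^{*}\cap T)=2m$. To dispose of this case I would split on the number of $z$-edges leaving $S^{*}$. If exactly one such edge exists, then $d_z(S^{*})=2m+1=d_G(S^{*})$ forces every cut-edge of $S^{*}$ to be incident to $z$; the non-empty set $W=V(G)\setminus(S^{*}\cup\{z\})$ then satisfies $d_G(W)=d_z(W)=1<2m$, contradicting $2m$-edge-connectivity. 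If at least two $z$-edges leave $S^{*}$, I would pick a tight witness $T'$ for a pair of those outside edges and redo the uncrossing between $S^{*}$ and $T'$; the symmetric-difference form of submodularity,
\[
d_G(S^{*}\setminus T')+d_G(T'\setminus S^{*})\le d_G(S^{*})+d_G(T')\le 4m+2,
\]
combined with $d_z(S^{*}\cup T')>d_z(S^{*})$, produces a tight set with strictly larger $d_z$-weight, again contradicting the maximality of $S^{*}$. This exhausts the configurations and completes the proof.
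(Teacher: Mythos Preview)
The paper does not supply its own proof of this statement: Theorem~\ref{thm:Mader} is quoted from Mader~\cite{Mader-1978} (with a pointer to~\cite{Ok-Thomassen-2017}) and used as a black box in the second proof of Corollary~\ref{cor:New}. So there is nothing to compare your argument against on the paper's side; what follows is a review of your proposal on its own merits.

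Your framework---submodular/posimodular uncrossing of the cut function against tight (``dangerous'') witnesses---is indeed the standard machinery behind Mader's theorem. However, the argument as written has a genuine gap in the ``generic'' case and a confused step in the borderline sub-case.

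\textbf{Generic case.} From $d_G(S^{*}\cap T)+d_G(S^{*}\cup T)\le 4m+2$ together with $d_G(S^{*}\cap T)\ge 2m$ you only get $d_G(S^{*}\cup T)\le 2m+2$. The value $2m+2$ is not excluded when $S^{*}\cup T\neq V(G)\setminus\{z\}$: it occurs precisely when $d_G(S^{*})=d_G(T)=2m+1$, $d_G(S^{*}\cap T)=2m$, and submodularity is tight (no edges between $S^{*}\setminus T$ and $T\setminus S^{*}$). In that configuration $S^{*}\cup T$ is \emph{not} tight, and your ``short arithmetic check on the residue'' does not rule it out. Maximality of $d_z(S^{*})$ gives you nothing here, because the only new tight set produced is $S^{*}\cap T\subseteq S^{*}$, which trivially has $d_z(S^{*}\cap T)\le d_z(S^{*})$.

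\textbf{Borderline sub-case 2.} You invoke the posimodular inequality $d_G(S^{*}\setminus T')+d_G(T'\setminus S^{*})\le 4m+2$ but then claim it yields a tight set with $d_z$-weight exceeding $d_z(S^{*})$. Neither $S^{*}\setminus T'$ nor $T'\setminus S^{*}$ has any reason to satisfy $d_z(\cdot)>d_z(S^{*})$: the set $T'\setminus S^{*}$ contains the two outside neighbours, but $d_z(T'\setminus S^{*})$ can be as small as $2$, while $d_z(S^{*})$ can be as large as $2m$. The sentence ``combined with $d_z(S^{*}\cup T')>d_z(S^{*})$'' mixes the union into a posimodular bound that says nothing about the union.

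In the standard proofs (Mader; Frank's simplification) one does not merely maximise $d_z$ over tight sets. One fixes a single $z$-edge $zx$ and analyses the \emph{maximal} dangerous sets containing $x$, showing there are at most two of them (using both uncrossing inequalities together with the degree hypothesis $d_G(z)\ge 2m+2$); a counting argument over all $z$-edges then forces a liftable pair. Your outline would need an extra layer of this kind to close the $d_G(S^{*}\cup T)=2m+2$ leak.
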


\begin{proofs}{
\noindent
\hspace*{-4mm}
\textbf{Corollary \ref{cor:New}.}
By induction on $\sum _{v\in V(G)}\max\{0, d_G(v)-2m-1\}$.
First, suppose that this summation is zero. This means that $\Delta(G)\le 2m+1$.
Since $G$ is $2m$-edge-connected, every vertex has degree $2m$ or $2m+1$.
Let $M\subseteq E_G(u)$ be an edge set of size 
$m$ or $m+1$ with respect to $d_G(u)=2m$ or $d_G(u)=2m+1$, 
where $E_G(u)$ denotes the set of edges of $G$ that are incident to $u$.
We claim that $G\setminus M$ is $m$-tree-connected and so the theorem obviously holds by setting $H=G\setminus M$.
Otherwise, Theorem~\ref{thm:Nash-Williams,Tutte} implies that there is a 
partition $P$ of $V(G)$
such that 
$m(|P|-1) > e_{G\setminus M}(P) \ge e_G(P)-|M|= \sum_{X\in P}d_G(X)/2-|M|\ge m|P|-|M|$.
This implies that the edges of $M$ join different parts of $P$,
$|M|=m+1$, $d_G(u)=2m+1$, and $d_G(X)=2m$ for all $X\in P$.
It is not hard to check that 
 $(E_G[U,\overline{U}]\cup E_G(u))\setminus M$ forms an edge cut of size $2m-1$ for $G$, which is contradiction, where $u\in U\in P$ and $E_G[U,\overline{U}]$ denotes the set of edges of $G$ with exactly one end in $U$.

Now, suppose that there is a vertex $z$ with $d_G(z)\ge 2m+2$. 
By Theorem~\ref{thm:Mader}, there are two edges $xz$ and $yz$ incident to $z$ such that 
after removing them, and inserting a new edge $xy$ for the case $x\neq y$, 
the resulting graph $G'$ is still $2m$-edge-connected.
By the induction hypothesis, the graph $G'$ has a factor $H'$ containing $m$ edge-disjoint spanning trees 
$T_1,\ldots, T_m$ such that $d_{H'}(u)\le \lfloor d_{G'}(u)/2\rfloor$ and for each vertex $v$ with $v\neq u$, $d_{H'}(v)\le \lceil d_{G'}(v)/2\rceil+m$. 
If $xy \not \in E(T_1\cup \cdots \cup T_m)$, then the theorem clearly holds. 
Thus we may assume that $xy\in E(T_1)$ and $z$ and $x$ lie in the same component of $T_1-xy$. 
Define $T'_1= T_1-xy+yz$. 
It is easy to see that $T'_1$ is connected and $T'_1\cup T_2\cup \cdots \cup T_m$ 
is the desired factor of $G$ that we are looking for.
}\end{proofs}
Before stating the third proof, let us establish the following lemma and state the next two well-known results.
\begin{lem}\label{lem:l0}
{Let $G$ be a graph with a factor $M$ satisfying $\Delta(M) \le m$. 
If $G$ can be decomposed into an $m$-tree-connected factor $H$ and a factor $F$ having an orientation such that for each vertex $v$, $d^+_F(v)\ge l_0(v)$, then $G$ can also be decomposed into an $m$-tree-connected factor $H'$ containing $M$ and a factor $F'$ having an orientation such that for each vertex $v$, $d^+_{F'}(v)\ge l_0(v)$, where $l_0$ is a nonnegative integer-valued function on $V(G)$.
}\end{lem}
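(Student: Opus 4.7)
My plan is to invoke a standard edge-exchange/maximality argument. Among all decompositions $G=H\cup F$ in which $H$ is an $m$-tree-connected factor and $F$ admits an orientation with $d^+_F(v)\ge l_0(v)$ for every vertex $v$, I would pick one that maximizes $|E(H)\cap E(M)|$. The goal is then to show that this maximum already equals $|E(M)|$, i.e.\ that $E(M)\subseteq E(H)$ for the extremal decomposition.

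Toward a contradiction, suppose an edge $uv\in E(M)\setminus E(H)$ exists. Then $uv\in E(F)$, so the fixed orientation of $F$ directs it either $u\to v$ or $v\to u$; by symmetry I may assume the former. I would next pick a minimal $m$-tree-connected subgraph $Q$ of $H$ containing $u$ and $v$, which exists because $H$ itself is $m$-tree-connected. The crux of the argument is to locate an edge $e'\in E(Q)$ that is incident to $u$ and does not lie in $M$. This follows from two facts: $Q$ is $m$-edge-connected, so $d_Q(u)\ge m$, while at most $d_M(u)-1\le m-1$ of these edges can lie in $M$, because the $M$-edge $uv$ incident to $u$ is not in $H\supseteq Q$. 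Hence $E(Q)\cap E_H(u)$ contains an edge $e'=uz$ outside $E(M)$.

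With such an $e'$ in hand, I perform the swap $H':=H-e'+uv$ and $F':=F-uv+e'$, orienting the new edge $uz$ from $u$ to $z$ and keeping all remaining orientations of $F$ intact. The corollary immediately following Corollary~\ref{cor:xGy-exchange} guarantees that $H'$ remains $m$-tree-connected. The out-degree bookkeeping is then immediate: $d^+_{F'}(u)=d^+_F(u)$, since the outgoing edge $uv$ is replaced by the outgoing edge $uz$; $d^+_{F'}(v)=d^+_F(v)$, since the removed edge was oriented away from $v$; and all other out-degrees are unchanged. Hence $d^+_{F'}(v)\ge l_0(v)$ for every vertex $v$, while $|E(H')\cap E(M)|=|E(H)\cap E(M)|+1$, contradicting the maximality of the chosen decomposition.

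The main obstacle, such as it is, is the small degree count producing $e'=uz\in E(Q)\setminus E(M)$; this is exactly where the hypothesis $\Delta(M)\le m$ meets the lower bound $d_Q(u)\ge m$ coming from $m$-edge-connectivity of $Q$, with the extra fact that $uv\notin E(H)$ providing the crucial slack. Once $e'$ is chosen to be incident to the tail $u$ of the removed oriented edge, the orientation extends trivially and the verification of $d^+_{F'}\ge l_0$ is a one-line check.
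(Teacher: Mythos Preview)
Your argument is correct and mirrors the paper's proof almost exactly: both pick a decomposition maximizing $|E(H)\cap E(M)|$, take a minimal $m$-tree-connected subgraph $Q$ through the endpoints of a missing $M$-edge, use $d_Q(u)\ge m$ against the fact that at most $d_M(u)-1\le m-1$ of the $Q$-edges at the tail $u$ lie in $M$ to find a swappable non-$M$ edge, and then perform the exchange while preserving all out-degrees. The only cosmetic difference is that the paper first shrinks $H$ to be \emph{minimally} $m$-tree-connected so that Corollary~\ref{cor:xGy-exchange} applies directly, whereas you keep $H$ arbitrary and invoke the unnamed corollary following it instead.
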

\begin{proof}
{Decompose $G$ into a minimally $m$-tree-connected factor $H$ and a factor $F$ having an orientation such that for each vertex $v$, $d^+_F(v)\ge l_0(v)$. Consider the pair $(H,F)$ with the maximum $|E(H) \cap E(M)|$. 
We claim that $H$ contains the edges of $M$. Suppose, to the contrary, that there is an edge $vx\in E(M) \cap E(F)$.
We may assume that $vx$ is directed from $v$ to $x$ in $F$. Let $Q$ be a minimal $m$-tree-connected factor of $G$ containing $v$ and $x$. Since $d_Q(v)\ge m> d_{H\cap M}(v)$, there is an edge $vy\in E(Q)\setminus E(M)$.
We define $H_0=H+vx-vy$ and $F_0=F-vx+vy$, and we orient the edge $vy$ from $v$ to $y$ in $F_0$. 
Obviously, for each vertex $u$, we still have $d^+_{F_0}(u)=d^+_F(u)$.
Moreover, by Corollary~\ref{cor:xGy-exchange:tree-connected}, the graph $H_0$ is still $m$-tree-connected.
Thus the new pair $(H_0, F_0)$ has the desired properties while $|E(H_0) \cap E(M)| >|E(H) \cap E(M)|$, which is a contradiction.
Hence the proof is completed.
}\end{proof}
\begin{thm}{\rm (Nash-Williams~\cite{MR0118684}, see Theorem 2.1 in~\cite{BangJensen-Thomasse-Yeo-2003})}\label{thm:Nash-Williams}
{Every $2m$-edge-connected graph $G$ has an $m$-arc-strong orientation such that for each vertex $v$, 
$\lfloor d_G(v)/2\rfloor\le d^+_G(v) \le \lceil d_G(v)/2\rceil$.
}\end{thm}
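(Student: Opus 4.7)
The plan is to prove this by induction on $\sigma(G) := \sum_{v\,:\,d_G(v)\ge 2m+2}(d_G(v) - 2m - 1)$, following the same skeleton as the second proof of Corollary~\ref{cor:New} above. Since $G$ is $2m$-edge-connected, every vertex satisfies $d_G(v) \ge 2m$, so the base case $\sigma(G)=0$ corresponds to $d_G(v) \in \{2m,2m+1\}$ at every $v$.

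For the inductive step, I would pick a vertex $z$ with $d_G(z) \ge 2m+2$ and invoke Theorem~\ref{thm:Mader} to produce edges $xz,yz$ whose splitting off at $z$ yields a $2m$-edge-connected graph $G'$ (by deleting $xz,yz$ and inserting the edge $xy$ when $x\ne y$, or by deleting them alone when $x=y$). Since $d_{G'}(z) = d_G(z)-2$, one has $\sigma(G')<\sigma(G)$, so the induction hypothesis supplies a balanced $m$-arc-strong orientation of $G'$. I would extend it to $G$ by orienting $xz$ from $x$ to $z$ and $zy$ from $z$ to $y$ whenever the new edge $xy$ of $G'$ was oriented $x\to y$, and in the case $x=y$ by orienting the two parallel edges between $x$ and $z$ in opposite directions. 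A direct calculation gives $d^+_G(z)=d^+_{G'}(z)+1$ and $d^+_G(v)=d^+_{G'}(v)$ for every $v\ne z$, which, together with $d_G(z)=d_{G'}(z)+2$, preserves the balance bounds at every vertex. For $m$-arc-strength, I would compare, for each nonempty proper $S\subseteq V(G)$, the quantity $d^+_G(S)$ with $d^+_{G'}(S)$ (when $z\notin S$) or with $d^+_{G'}(S\setminus\{z\})$ (when $z\in S$); a short case analysis on the positions of $x,y$ with respect to $S$ shows that in every configuration this difference is nonnegative, so $d^+_G(S)\ge m$.

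For the base case, let $U$ be the set of odd-degree vertices of $G$; then $|U|$ is even. When $U=\emptyset$, the graph $G$ is Eulerian and any Eulerian orientation $D$ satisfies $d^+_D(v)=m$ at every vertex, together with $d^+_D(S)=d_G(S)/2\ge m$ for every proper nonempty $S$, completing the argument. When $U\ne\emptyset$, I would fix a perfect matching $M$ on $U$ (as auxiliary edges) so that $G^+:=G\cup M$ becomes Eulerian and remains $2m$-edge-connected; an Eulerian orientation of $G^+$ is automatically $m$-arc-strong by the Eulerian case, and restricting to $G$ produces the correct balanced out-degree at every vertex of $G$.

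The main obstacle will be choosing $M$ so that the restriction to $G$ remains $m$-arc-strong. Using $d^+_{G^+}(S)=(d_G(S)+d_M(S))/2$ from the Eulerian orientation, the inequality $d^+_G(S)\ge m$ is equivalent to $d^+_M(S)-d^-_M(S)\le d_G(S)-2m$, which is trivially satisfied away from the \emph{tight} cuts ($d_G(S)=2m$) but at such a cut forces $d_M(S)=0$. The parity identity $|U\cap S|\equiv d_G(S)\pmod 2$ guarantees that $|U\cap S|$ is even at every tight cut, so in principle a matching $M$ avoiding all tight cuts does exist; producing one explicitly amounts to an uncrossing argument on the (laminar) family of minimal tight cuts, which is the real content of the theorem.
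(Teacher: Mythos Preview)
The paper does not prove this theorem; it is quoted from \cite{MR0118684} (with a pointer to \cite{BangJensen-Thomasse-Yeo-2003}) and used as a black box in the proof of Theorem~\ref{thm:New}. Your inductive step via Mader's splitting-off is essentially correct (the comparison with $d^+_{G'}(S\setminus\{z\})$ is unnecessary, since in fact $d^+_G(S)\ge d^+_{G'}(S)$ for every nonempty proper $S$, whether or not $z\in S$; but the conclusion stands).

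The base case, however, contains a genuine error. You assert that $d^+_M(S)-d^-_M(S)\le d_G(S)-2m$ is ``trivially satisfied away from the tight cuts''; it is not. For a concrete failure take $m=1$ and let $G$ be the $3$-prism (triangles $a_1a_2a_3$ and $b_1b_2b_3$ with rungs $a_ib_i$). This $G$ is $3$-edge-connected, so there are no tight cuts at all and your condition on $M$ is vacuous; yet with $M=\{a_1b_2,\,a_2b_3,\,a_3b_1\}$ and the Eulerian orientation of $G\cup M$ that directs both triangles cyclically, each rung as $b_i\!\to\!a_i$, and each $M$-edge as $a_i\!\to\!b_{i+1}$, the restriction to $G$ has $d^+_G(\{a_1,a_2,a_3\})=0$. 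What is actually required is a pairing $M$ of the odd-degree vertices satisfying $d_M(S)\le d_G(S)-2m$ for \emph{every} nonempty proper $S$, not only the tight ones. Proving that such a pairing exists is precisely Nash-Williams' odd-vertex-pairing theorem and is the substantive content of \cite{MR0118684}; it does not reduce to a routine uncrossing on minimal tight cuts.
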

\begin{thm}{\rm(Edmonds~\cite{Edmonds-1973})}\label{thm:Edmonds}
{Let $G$ be a directed graph with $u\in V(G)$. 
If $d^-_G(X)\ge m$ for all $X\subsetneq V(G)$ with $u\in X$, then
 $G$ has a spanning subdigraph $H$ such that its underlying graph is $m$-tree-connected, $d^+_{H}(u)=0$, and $d^+_{H}(v)=m$
 for all $v\in V(G)\setminus \{u\}$, where $d^-_G(X)$ denotes the number of incoming edges to~$X$ in $G$.
}\end{thm}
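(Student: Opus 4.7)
The plan is to prove the theorem by induction on $m$. The base case $m=1$ is a reachability argument: the hypothesis $d^-_G(X)\ge 1$ for every proper $X\ni u$ forces every vertex of $G$ to reach $u$ by a directed path. Indeed, if $Y$ denotes the set of vertices that can reach $u$, then $u\in Y$; if $Y\subsetneq V(G)$, the hypothesis would produce an arc from $V(G)\setminus Y$ to $Y$, whose tail can then reach $u$, contradicting its exclusion from $Y$. Once every vertex reaches $u$, a spanning in-branching $T$ rooted at $u$ can be extracted (for instance by a reverse breadth-first search from $u$), and this $T$ serves as $H$, since $d_T^+(u)=0$, $d_T^+(v)=1$ for $v\ne u$, and $T$ is a spanning tree (hence $1$-tree-connected).

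For the inductive step I would peel off a single in-branching $T$ rooted at $u$ such that $G':=G-E(T)$ still satisfies the hypothesis for $m-1$, then apply the induction hypothesis to $G'$ to obtain an $(m-1)$-tree-connected factor $H'$ with $d_{H'}^+(u)=0$ and $d_{H'}^+(v)=m-1$ for $v\ne u$; the graph $H:=H'\cup T$ is then $m$-tree-connected with the required out-degree profile. The crux is to choose $T$ so that $d_{G-E(T)}^-(X)\ge m-1$ for every proper $X\ni u$, which amounts to requiring $T$ to enter every \emph{tight} set (i.e.\ $X\subsetneq V(G)$ with $u\in X$ and $d_G^-(X)=m$) in exactly one arc; any in-branching enters such an $X$ at least once, so the real content is an upper bound of one.

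The main obstacle is the existence of such a $T$. My approach is to exploit the submodularity of the incoming-degree function,
\[
 d^-_G(X)+d^-_G(Y)\ \ge\ d^-_G(X\cap Y)+d^-_G(X\cup Y),
\]
which implies that the collection of tight sets containing $u$ is closed under both union and intersection, and hence forms a sublattice. I would build $T$ greedily starting from $\{u\}$ and, at each stage, absorb a new vertex along an incoming arc that lies in a tight set which is minimal (with respect to inclusion) among tight sets not yet entered; the sublattice structure guarantees that such a minimal choice is well defined, and a standard exchange argument shows that this strategy terminates in an in-branching entering every tight set in precisely one arc. An alternative avenue would be to invoke Edmonds' matroid intersection theorem on the graphic matroid of the underlying undirected graph against a partition matroid enforcing out-degree $m$ at each $v\ne u$, but the direct construction above seems to me the most natural route.
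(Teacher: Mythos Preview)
The paper does not supply a proof of this statement: Theorem~\ref{thm:Edmonds} is merely quoted from Edmonds~(1973) and invoked as a black box in the proof of Theorem~\ref{thm:New}. So there is no proof in the paper to compare your plan against.

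That said, your overall architecture---induction on $m$, peeling off a single spanning in-branching $T$ so that $G-E(T)$ still satisfies the hypothesis for $m-1$---is Lov\'asz's classical approach and is sound. The gap is in your reduction step. You assert that ensuring $d^-_{G-E(T)}(X)\ge m-1$ for every proper $X\ni u$ ``amounts to requiring $T$ to enter every tight set in exactly one arc''; this is false. Take $m=2$, $V(G)=\{u,a,b,c\}$ and arcs $a\!\to\! u$, $a\!\to\! b$, $b\!\to\! u$, $b\!\to\! c$, $c\!\to\! u$, $c\!\to\! a$. Every proper $X\ni u$ has $d^-_G(X)\ge 2$, the tight sets are exactly the three $3$-element sets, and the star $T=\{a\!\to\! u,\ b\!\to\! u,\ c\!\to\! u\}$ enters each of them in precisely one arc---yet $d^-_{G-E(T)}(\{u\})=0<m-1$. (A good $T$ does exist here, e.g.\ $a\!\to\! b\!\to\! c\!\to\! u$.) Thus your greedy rule, even granting that it can be made precise, is aimed at the wrong target: controlling only the tight sets does not control the non-tight ones. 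In Lov\'asz's actual argument one grows $T$ while maintaining, via submodularity of $d^-$ applied at each extension step, an invariant strong enough to bound the number of $T$-arcs entering \emph{every} proper $X\ni u$, and the existence of an admissible extending arc at each step is the real content---it is not ``a standard exchange argument''. Your plan would need to be reworked along those lines.
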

Now, we are in a position to provide the third proof of Corollary~\ref{cor:New} by proving the following stronger version.
\begin{thm}\label{thm:New}
{Let $G$ be a graph with a factor $M$ satisfying $\Delta(M)\le m$. If $G$ is $2m$-edge-connected, then it can be decomposed into an $m$-tree-connected factor $H$ containing $M$ and a factor $F$ having an orientation such that for each vertex $v$, 
$$d^+_F(v)\ge \lfloor \frac{d_G(v)}{2}\rfloor-m.$$
Furthermore, for an arbitrary given vertex $u$ the lower bound can be increased to $\lceil \frac{d_G(u)}{2}\rceil$.
}\end{thm}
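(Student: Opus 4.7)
The plan is to compose three tools already established in this section: Theorem~\ref{thm:Nash-Williams}, Theorem~\ref{thm:Edmonds}, and Lemma~\ref{lem:l0}. First I would apply Theorem~\ref{thm:Nash-Williams} to obtain an $m$-arc-strong orientation $\vec G$ of $G$ with $\lfloor d_G(v)/2\rfloor\le d^+_{\vec G}(v)\le \lceil d_G(v)/2\rceil$ at every vertex~$v$. For the ``furthermore'' assertion, if the designated vertex $u$ happens to satisfy $d^+_{\vec G}(u)=\lfloor d_G(u)/2\rfloor<\lceil d_G(u)/2\rceil$ (which can only occur when $d_G(u)$ is odd), I would reverse every arc of $\vec G$; this swaps $d^+$ with $d^-$ globally and therefore preserves both $m$-arc-strongness and the Nash--Williams degree range at every vertex, while forcing $d^+_{\vec G}(u)=\lceil d_G(u)/2\rceil$.

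Since $\vec G$ is $m$-arc-strong, every proper subset $X$ of $V(G)$ containing $u$ satisfies $d^-_{\vec G}(X)\ge m$, so Theorem~\ref{thm:Edmonds} applied with root $u$ produces an $m$-tree-connected spanning subgraph $H_0$ of $\vec G$ with $d^+_{H_0}(u)=0$ and $d^+_{H_0}(v)=m$ for every $v\neq u$. Let $F_0$ be the complementary factor $G\setminus E(H_0)$, inheriting its arcs from $\vec G$. Then $d^+_{F_0}(u)=d^+_{\vec G}(u)\ge \lceil d_G(u)/2\rceil$, and for each $v\neq u$,
$$d^+_{F_0}(v)=d^+_{\vec G}(v)-m\ge \lfloor d_G(v)/2\rfloor-m.$$
(For the main statement alone, $u$ may be chosen arbitrarily, since the bound $\lfloor d_G(u)/2\rfloor-m$ is automatically satisfied at the root as well.)

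To enforce $M\subseteq E(H)$, I would finally invoke Lemma~\ref{lem:l0} with $l_0(v)=\lfloor d_G(v)/2\rfloor-m$ for $v\neq u$ and $l_0(u)=\lceil d_G(u)/2\rceil$. This $l_0$ is a nonnegative integer-valued function on $V(G)$: since $G$ is $2m$-edge-connected, $d_G(v)\ge 2m$, hence $l_0(v)\ge 0$. The pair $(H_0,F_0)$ satisfies the hypothesis of Lemma~\ref{lem:l0}, so the lemma supplies a decomposition of $G$ into an $m$-tree-connected factor $H$ containing $M$ and a factor $F$ equipped with an orientation satisfying $d^+_F(v)\ge l_0(v)$ throughout---exactly the bound asserted by the theorem. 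The only real care point is bookkeeping: verifying that the arc reversal keeps the Nash--Williams constraints intact, and that Lemma~\ref{lem:l0}, whose proof swaps one edge at a time between $H_0$ and $F_0$ while retaining the orientation inherited from $F_0$, genuinely preserves the oriented lower bounds.
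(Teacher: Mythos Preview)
Your proposal is correct and follows essentially the same route as the paper: apply Theorem~\ref{thm:Nash-Williams} (reversing all arcs if necessary so that $d^+_{\vec G}(u)=\lceil d_G(u)/2\rceil$), apply Theorem~\ref{thm:Edmonds} rooted at $u$ to get the initial decomposition $(H_0,F_0)$, and then invoke Lemma~\ref{lem:l0} to absorb $M$ into the tree-connected part. The only cosmetic difference is that the paper first records the bound $d_{H_0}(v)\le\lceil d_G(v)/2\rceil+m$ before deducing the out-degree bound on $F_0$, whereas you compute $d^+_{F_0}(v)=d^+_{\vec G}(v)-d^+_{H_0}(v)$ directly; the two are equivalent.
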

\begin{proof}{
Consider an $m$-arc-strong orientation for $G$ with the properties stated in Theorem~\ref{thm:Nash-Williams}. 
We may assume that the out-degree of $u$ is equal to $\lceil d_G(u)/2\rceil$; otherwise, we reverse the orientation of $G$.
Take
$H$ to be a spanning subdigraph of $G$ with the properties stated in Theorem~\ref{thm:Edmonds}.
For each vertex $v$, we have
$d_H(v)=d^-_H(v)+d^+_H(v)\le d^-_G(v)+d^+_H(v) \le \lceil d_G(v)/2\rceil+m$.
In particular, 
$d_H(u)\le d^-_G(u)+d^+_H(u) \le \lfloor d_G(u)/2\rfloor$.
These imply that for each vertex $v$, $d^+_F(v)\ge \lfloor \frac{d_G(v)}{2}\rfloor-m$, and $d^+_F(u)\ge \lceil \frac{d_G(u)}{2}\rceil$, where $F$ is the complement of $H$ in $G$.
Now, it is enough to apply Lemma~\ref{lem:l0} to complete the proof.
}\end{proof}
%
%
%
%
%
%
%
%
%
%
%
%
%

\section{Tough enough graphs}
\label{sec:tough-graph}
\subsection{The existence of $m$-tree-connected $[m,2m+1]$-factors}
\label{subsec:toughenough}

As we have already shown in Theorem~\ref{thm:comparison}, $m$-strongly tough enough graphs are tough enough.
In this subsection, we shall prove the converse statement and examine
 tough enough graphs for Corollary~\ref{cor:similar:toughness}.
For this purpose, we need the following two lemmas.
\begin{lem}\label{lem:optimized}
{Let $G$ be a graph and let $\varepsilon$ be a real number with $0 \le \varepsilon \le 1$. If $S$ is a subset of $V(G)$ 
with the maximum $\Omega_m(G\setminus S)-\varepsilon|S|$ and with the maximal $|S|$, 
then every component of $G\setminus S$ is $m$-tree-connected or has maximum degree at most $m$.
}\end{lem}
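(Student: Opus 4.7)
Proof proposal.

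The plan is to assume, for contradiction, that some component $C$ of $G\setminus S$ is neither $m$-tree-connected nor of maximum degree at most $m$, and then construct $S' = S \cup \{v\}$ for a suitable $v\in V(C)$ satisfying $\Omega_m(G\setminus S') \ge \Omega_m(G\setminus S)+1$. Since $\varepsilon \le 1$, this gives
$\Omega_m(G\setminus S')-\varepsilon|S'|\ge \Omega_m(G\setminus S)-\varepsilon|S|$,
which contradicts either the maximality of the objective or, in the equality case, the maximality of $|S|$.

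The central computation is the following identity. For any vertex $v$ of $G_0 := G\setminus S$, letting $A^*$ be the $m$-tree-connected component of $G_0$ containing $v$ and $a_v$ the number of edges of $G_0$ joining $v$ to other $m$-tree-connected components of $G_0$,
\[
\Omega_m\bigl(G_0\setminus\{v\}\bigr) \;=\; \Omega_m(G_0) - m + a_v + \Omega_m\bigl(G_0[A^*\setminus \{v\}]\bigr),
\]
with the convention $\Omega_m(K_0)=0$. To see this, one observes that the $m$-tree-connected components of $G_0\setminus\{v\}$ are precisely the parts of $P\setminus\{A^*\}$ together with those of $G_0[A^*\setminus\{v\}]$, where $P$ is the partition of $V(G_0)$ from the $m$-tree-connected components of $G_0$; substituting into $\Omega_m = m|P'|-e_{G_0\setminus\{v\}}(P')$ and simplifying then yields the identity.

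To select $v$, let $P_C$ denote the partition of $V(C)$ into $m$-tree-connected components of $C$, so $|P_C|\ge 2$. Two cases arise. First, if some $A\in P_C$ satisfies $|A|\ge 2$, then since $A\subsetneq V(C)$ and $C$ is connected, some $v\in A$ is incident to an edge of $C$ leaving $A$; thus $a_v\ge 1$, and since $A^*\setminus\{v\}$ is nonempty we also have $\Omega_m(G_0[A^*\setminus\{v\}])\ge m$, so the identity gives change at least $1$. Second, if every part of $P_C$ is a singleton, then the hypothesis $\Delta(C)\ge m+1$ supplies a vertex $v\in V(C)$ with $d_C(v)\ge m+1$; here $A^*=\{v\}$, $a_v=d_C(v)\ge m+1$, and $\Omega_m(G_0[\emptyset])=0$, and the identity again gives change at least $1$.

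The main technical subtlety is the partition-refinement identity itself: one has to verify that after deleting $v$, the parts of $P$ different from $A^*$ remain maximal $m$-tree-connected subgraphs (which follows from Observation~\ref{observ:definition:tree-connected} together with the maximality definition of $m$-tree-connected components), while any splitting occurs entirely inside $A^*$. Once that identity is secured, the case analysis is driven entirely by whether the offending component has a non-trivial $m$-tree-connected subcomponent, and the hypothesis $\Delta(C)\ge m+1$ enters only to handle the all-singletons case.
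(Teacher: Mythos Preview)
Your proof is correct and follows essentially the same approach as the paper's: pick a vertex $v$ in an offending component, set $S'=S\cup\{v\}$, and show $\Omega_m(G\setminus S')\ge \Omega_m(G\setminus S)+1$, contradicting maximality. The paper's proof is much terser---it simply asserts ``it is not difficult to check'' the inequality and mentions the same two cases (whether $v$ lies in a non-trivial $m$-tree-connected component or is itself a trivial one)---whereas you supply the explicit partition-refinement identity
\[
\Omega_m(G_0\setminus\{v\}) = \Omega_m(G_0) - m + a_v + \Omega_m(G_0[A^*\setminus\{v\}])
\]
and verify it. One cosmetic difference: the paper selects $v$ satisfying \emph{both} $d_{G\setminus S}(v)\ge m+1$ and ``incident to a cross-edge'' simultaneously, while in your Case~1 you only demand the cross-edge condition; but since $|A^*|\ge 2$ forces $d_{G_0[A^*]}(v)\ge m$, your $v$ automatically has degree $\ge m+1$ anyway, so the two selections coincide.
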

\begin{proof}
{Let $v$ be an arbitrary vertex of $G\setminus S$ and define $S'=S\cup \{v\}$.
If $v$ is contained in a non-trivial $m$-tree-connected component of $G\setminus S$ with vertex set $X$, then 
it is not difficult to check that
$\Omega_m(G\setminus S')=\Omega_m(G\setminus S)-m+\Omega_m(G[X\setminus v])+d$, 
where $d$ denotes the number of edges incident to $v$ having one end in $V(G)\setminus (X\cup S)$.
Thus by Theorem~\ref{thm:maximum-partition},
$\Omega_m(G\setminus S')-\varepsilon|S'|\ge \Omega_m(G\setminus S)-\varepsilon|S| -\varepsilon+d$.
According to the assumption, we must have $d<\varepsilon$ and so $d=0$.
Therefore, every non-trivial $m$-tree-connected component of $G\setminus S$ is also a component of it.
If $v$ is a trivial $m$-tree-connected component of $G\setminus S$, then 
it is not difficult to check that
$\Omega_m(G\setminus S') = \Omega_m(G\setminus S)-m+d_{G\setminus S}(v)$.
Thus $\Omega_m(G\setminus S')-\varepsilon|S'|= \Omega_m(G\setminus S)-\varepsilon|S| -\varepsilon-m+d_{G\setminus S}(v)$.
According to the assumption, we must have $d_{G\setminus S}(v)< m+\varepsilon$ and so $d_{G\setminus S}(v)\le m$.
This implies that if a component of $G\setminus S$ is not $m$-tree-connected, then it has maximum degree at most $m$.
This completes the proof.
}\end{proof}
\begin{lem}{\rm (\cite{Isolated})}\label{lem:base:independent-subset}
{Let $H$ be a graph. If $\varphi$ is a nonnegative real function on $V(H)$,
 then there is an independent subset $I$ of $V(H)$ such that 
$$
\sum_{v\in V(H)} \varphi(v)\le \sum_{v\in I}\varphi(v) (d_H(v)+1) .
$$
}\end{lem}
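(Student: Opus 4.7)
The plan is to prove Lemma~\ref{lem:base:independent-subset} by a short inductive argument based on removing a vertex of maximum $\varphi$-value together with its closed neighborhood. Equivalently, one can give a very clean probabilistic proof via a random ordering; I will outline both but favor the induction for writing purposes.

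First I would handle the trivial base cases: if $V(H)=\emptyset$, take $I=\emptyset$ (both sides vanish); if $|V(H)|=1$, take $I=V(H)$, which works since $d_H(v)+1=1$. For the inductive step on $|V(H)|$, choose a vertex $u\in V(H)$ maximizing $\varphi(u)$ and let $N[u]=\{u\}\cup N_H(u)$, so $|N[u]|=d_H(u)+1$. Apply the induction hypothesis to the graph $H'=H\setminus N[u]$ with the restriction of $\varphi$ to $V(H')$, obtaining an independent set $I'\subseteq V(H')$ with
\[
\sum_{v\in V(H')}\varphi(v)\;\le\;\sum_{v\in I'}\varphi(v)\bigl(d_{H'}(v)+1\bigr)\;\le\;\sum_{v\in I'}\varphi(v)\bigl(d_{H}(v)+1\bigr),
\]
the last inequality because removing vertices cannot raise degrees.

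Now set $I=I'\cup\{u\}$. Independence of $I$ is automatic: $I'\subseteq V(H')=V(H)\setminus N[u]$, so no vertex of $I'$ is adjacent to $u$, and $I'$ itself is independent in $H'\subseteq H$. For the inequality, split the total sum as
\[
\sum_{v\in V(H)}\varphi(v)\;=\;\sum_{v\in V(H')}\varphi(v)\;+\;\sum_{v\in N[u]}\varphi(v).
\]
The first term is bounded via the inductive estimate above. For the second term, by maximality of $\varphi(u)$, every $v\in N[u]$ satisfies $\varphi(v)\le\varphi(u)$, so
\[
\sum_{v\in N[u]}\varphi(v)\;\le\;|N[u]|\,\varphi(u)\;=\;\varphi(u)\bigl(d_H(u)+1\bigr).
\]
Adding the two bounds yields exactly $\sum_{v\in I}\varphi(v)\bigl(d_H(v)+1\bigr)$, completing the induction.

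There is no real obstacle here; the whole argument rests on the single observation that the closed-neighborhood budget $(d_H(u)+1)\varphi(u)$ pays for all vertices in $N[u]$ when $u$ has maximum weight. As an alternative, one may use a uniformly random total order on $V(H)$, letting $I$ be the set of vertices that precede all of their neighbors: this $I$ is independent, and for each $v$ one has $\Pr[v\in I]=1/(d_H(v)+1)$, so
\[
\mathbb{E}\!\left[\sum_{v\in I}\varphi(v)\bigl(d_H(v)+1\bigr)\right]=\sum_{v\in V(H)}\varphi(v),
\]
whence some ordering realizes the desired inequality. This is essentially the weighted Caro--Wei idea; I would use whichever version fits the surrounding exposition better.
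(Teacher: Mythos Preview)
Your proof is correct. Note, however, that the paper does not actually supply its own proof of this lemma: it is quoted from~\cite{Isolated} and used as a black box, so there is nothing in the present paper to compare your argument against. Both of your arguments --- the greedy induction on $|V(H)|$ by deleting the closed neighborhood of a vertex of maximum weight, and the random-permutation (weighted Caro--Wei) argument --- are standard and sound; either would serve perfectly well as a self-contained proof here.
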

Now, we are ready to prove the main result of this section.
\begin{thm}\label{thm:iso:tree-connected}
{Let $G$ be a graph and let $\varepsilon$ and $c$ be two real numbers satisfying $0\le \varepsilon\le 1/m $ and $1 \le c$. 
If for all $S\subseteq V(G)$, 
$$ \omega(G\setminus S)+\small {\frac{m+1}{2}}\, iso(G\setminus S) \le \varepsilon |S|+c,$$
then for all $S\subseteq V(G)$, $$\frac{1}{m} \Omega_m(G\setminus S) \le \varepsilon |S|+c.$$
}\end{thm}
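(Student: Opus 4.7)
The plan is to fix a subset $S^*\subseteq V(G)$ maximizing $\Omega_m(G\setminus S) - \varepsilon|S|$, and, among such subsets, having maximum cardinality. Then for every $S$ we have $\Omega_m(G\setminus S) - \varepsilon|S| \le \Omega_m(G\setminus S^*) - \varepsilon|S^*|$, so it suffices to prove $\Omega_m(G\setminus S^*) \le \varepsilon|S^*| + cm$. Since $\varepsilon\le 1$, Lemma~\ref{lem:optimized} guarantees that every component of $G\setminus S^*$ is $m$-tree-connected or has maximum degree at most $m$. I split the components into the family $\mathcal{T}$ of $m$-tree-connected ones (in which the isolated vertices sit) and the family $\mathcal{D}$ of the remaining components. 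If $C\in\mathcal{D}$ contained a non-trivial $m$-tree-connected subgraph $X$, then $m$-edge-connectivity would force $d_X(v)\ge m$ while $\Delta(C)\le m$ would give $d_X(v)=d_C(v)=m$ for every $v\in X$, so $X$ would be disconnected from $V(C)\setminus X$ in $C$, contradicting that $C$ is connected but not $m$-tree-connected. Hence every $m$-tree-connected component of $C\in\mathcal{D}$ is trivial, and $\Omega_m(C)=m|V(C)|-|E(C)|$.

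The heart of the proof is to apply Lemma~\ref{lem:base:independent-subset} to each $C\in\mathcal{D}$ with the weight $\varphi(v) = m + \varepsilon - \tfrac{1}{2}d_C(v)$, which is nonnegative because $d_C(v)\le m$. Since $\sum_{v\in V(C)}\varphi(v) = (m+\varepsilon)|V(C)| - |E(C)|$, the lemma produces an independent set $I_C\subseteq V(C)$ with
\[
(m+\varepsilon)|V(C)| - |E(C)| \;\le\; \sum_{v\in I_C}\bigl(m+\varepsilon - \tfrac{1}{2}d_C(v)\bigr)\bigl(d_C(v)+1\bigr).
\]
Maximizing $d\mapsto (m+\varepsilon - d/2)(d+1)$ on $\{0,\ldots,m\}$ one finds, using $\varepsilon\le 1$, that it peaks at $d=m$ with value $(m+1)(m/2+\varepsilon)\le \tfrac{m(m+3)}{2}+\varepsilon$. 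This yields the per-component estimate
\[
(m+\varepsilon)|V(C)| - |E(C)| \;\le\; \Bigl(\tfrac{m(m+3)}{2}+\varepsilon\Bigr)|I_C|.
\]

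Finally I set $S' = S^*\cup \bigcup_{C\in\mathcal{D}}(V(C)\setminus I_C)$. Each component of $\mathcal{T}$ is untouched in $G\setminus S'$, and every vertex of $\bigcup_C I_C$ becomes isolated because each $I_C$ is independent in $C$. Applying the hypothesis to $S'$ and multiplying by $m$ yields the inequality
\[
m|\mathcal{T}_{\mathrm{ntl}}|+\tfrac{m(m+3)}{2}|\mathcal{T}_{\mathrm{triv}}|+\tfrac{m(m+3)}{2}\textstyle\sum_{\mathcal{D}}|I_C| \;\le\; \varepsilon|S^*|+\varepsilon\sum_{\mathcal{D}}(|V(C)|-|I_C|)+cm.
\]
On the other hand, summing the per-component estimate and using $m\le\tfrac{m(m+3)}{2}$ one sees that $\Omega_m(G\setminus S^*) = m|\mathcal{T}_{\mathrm{ntl}}|+m|\mathcal{T}_{\mathrm{triv}}|+\sum_{\mathcal{D}}(m|V(C)|-|E(C)|)$ is bounded above by $m|\mathcal{T}_{\mathrm{ntl}}|+\tfrac{m(m+3)}{2}|\mathcal{T}_{\mathrm{triv}}|+\tfrac{m(m+3)}{2}\sum_{\mathcal{D}}|I_C|-\varepsilon\sum_{\mathcal{D}}(|V(C)|-|I_C|)$. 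Combining the two displays cancels the $\varepsilon\sum(|V(C)|-|I_C|)$ contributions and delivers $\Omega_m(G\setminus S^*) \le \varepsilon|S^*| + cm$.

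The main technical obstacle is designing the weight $\varphi$ in Lemma~\ref{lem:base:independent-subset} so that the coefficient $\tfrac{m(m+3)}{2}+\varepsilon$ produced by the per-component estimate aligns exactly with the coefficient that the hypothesis (after multiplication by $m$) attaches to the isolated-vertex count; the choice $\varphi(v)=m+\varepsilon-d_C(v)/2$ is tailored for precisely this cancellation, with the hypothesis $\varepsilon\le 1$ playing the crucial role in converting $(m+1)(m/2+\varepsilon)$ into $\tfrac{m(m+3)}{2}+\varepsilon$.
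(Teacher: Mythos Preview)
Your argument is correct and follows essentially the same route as the paper's proof: both pick an extremal $S$ via Lemma~\ref{lem:optimized}, apply Lemma~\ref{lem:base:independent-subset} with the weight $\varphi(v)=m+\varepsilon-\tfrac{1}{2}d_C(v)$, move the non-independent vertices into $S$ to form $S'$, and apply the hypothesis to~$S'$. The only cosmetic differences are that the paper lumps isolated vertices together with the $\mathcal{D}$-components into a single graph $C$ (so it treats the case $V(C)=\emptyset$ separately, which your organization handles automatically), and the paper keeps the sharp value $(m/2+\varepsilon)(m+1)=\tfrac{1}{2}m^2+(\varepsilon+\tfrac{1}{2})m+\varepsilon$ rather than relaxing it to $\tfrac{m(m+3)}{2}+\varepsilon$; either bound suffices for the final cancellation.
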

\begin{proof}{
Let $S\subseteq V(G)$ with the properties described in Lemma~\ref{lem:optimized}.
Denote by $\sigma$ the number of non-trivial components of $G\setminus S$ which are $m$-tree-connected.
Let $C$ be the induced subgraph of $G$ consisting of the vertices of trivial $m$-tree-connected components of $G\setminus S$.
By Lemma~\ref{lem:base:independent-subset}, there is an independent set $I$ of $C$ 
such that 
$$\sum_{v\in V(C)}(1+\varepsilon-\frac{d_C(v)}{2m})\le 
\sum_{v\in I}(1+\varepsilon-\frac{d_C(v)}{2m})(d_C(v)+1).
$$
If $V(C)$ is empty, then we must automatically have 
$\frac{1}{m}\Omega_m(G\setminus S)= \omega (G\setminus S) \le \varepsilon |S|+c$.
We may therefore assume that $V(C)$ is nonempty and so is $I$. 
Since $\Delta(C)\le m$ and $ 0\le \varepsilon \le 1/m$, we must have 
\begin{equation*}
{\sum_{v\in I}(1+\varepsilon- \frac{d_C(v)}{2m})(d_C(v)+1)\le 
\frac{1}{2m}\sum_{v\in I}(2m+2m\varepsilon-m)(m+1)\le (\frac{m+3}{2}+\varepsilon)|I|,
}\end{equation*}
which implies that 
\begin{equation}\label{eq:thm:iso:tree-connected:1}
{\sum_{v\in V(C)}(1-\frac{d_C(v)}{2m}) \le (\frac{m+3}{2}+\varepsilon)|I|-\varepsilon|V(C)| .
}\end{equation}
Let $S'=S\cup (V(C)\setminus I)$ so that 
$\omega(G\setminus S') =\sigma+|I|$ and $iso(G\setminus S') =|I|$.
Thus by the assumption, 
$$
\sigma+\frac{m+3}{2}|I|= 
\omega(G\setminus S')+ \frac{m+1}{2}\, iso(G\setminus S')\le
 \varepsilon |S'|+c = \varepsilon (|S|+|V(C)|-|I|)+c,
$$
which implies that 
\begin{equation}\label{eq:thm:iso:tree-connected:2}
{\sigma+(\frac{m+3}{2}+\varepsilon)|I|-\varepsilon |V(C)| 
\le \varepsilon |S|+c.
}\end{equation}
Therefore, 
Inequations (\ref{eq:thm:iso:tree-connected:1}) and (\ref{eq:thm:iso:tree-connected:2}) can conclude that
$$\frac{1}{m}\Omega_m(G\setminus S)= 
 \sigma+\sum_{v\in V(C)}(1-\frac{d_C(v)}{2m})\le
\varepsilon|S|+c.$$
Hence the theorem holds.
}\end{proof}
The following corollary says that tough enough graphs with sufficiently large
order are also $m$-strongly tough enough.
\begin{cor}\label{cor:tough-enough}
{Let $G$ be a graph and let $t$ be a real number with $t\ge 1$. 
If $G$ is $2m^2t$-tough and $|V(G)|\ge 2m^2 t$, 
then $G$ is also $m$-strongly $t$-tough.
}\end{cor}
\begin{proof}
{We may assume that $m\ge 2$. 
Since $G$ is $\frac{m+3}{2}(m+1) t$-tough, for every $S\subseteq V(G)$, 
$ \omega(G\setminus S)+\small {\frac{m+1}{2}}\, iso(G\setminus S) \le \frac{1}{(m+1) t}|S|+1$ provided that
 $\omega(G\setminus S) \ge 2$ or $iso(G\setminus S) = 0$.
In addition, if $\omega(G\setminus S) = iso(G\setminus S) =1$, then $|S|=|V(G)|-1$, and so 
$ \omega(G\setminus S)+\small {\frac{m+1}{2}}\, iso(G\setminus S)=
\frac{m+3}{2} \le \frac{1}{ (m+1) t}|S|+1$.
Thus by Theorem~\ref{thm:iso:tree-connected}, 
for every $S\subseteq V(G)$,
 $ \frac{1}{m} \Omega_m(G\setminus S) \le \frac{1}{(m+1)t}|S|+1$.
Therefore, if $\Omega_m(G\setminus S) >m$, 
then we must have $|S|\ge \frac{(m+1)t}{m}$ which implies that 
$\frac{1}{m}\Omega_m(G\setminus S) \le \frac{1}{(m+1)t} |S|+1 \le \frac{1}{t} |S|$.
This means that $G$ is $m$-strongly $t$-tough.
}\end{proof}
The following result gives a sufficient toughness-type condition for a graph to have an $m$-tree-connected factor with maximum degree at most $2m+1$.
\begin{cor}\label{cor:tough+iso}
{Let $G$ be a graph with a factor $M$ satisfying $\Delta(M)\le m$. If for all $S\subseteq V(G)$, 
$$ \omega(G\setminus S)+\small {\frac{m+1}{2}}\, iso(G\setminus S) \le \frac{1}{m}|S|+1,$$
then $G$ admits an $m$-tree-connected factor $H$ containing $M$ such that for each vertex $v$, $d_H(v)\le 2m+1$,
and also $d_H(u)\le m+1$ for an arbitrary given vertex $u$.
}\end{cor}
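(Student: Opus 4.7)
The plan is to combine Theorem~\ref{thm:iso:tree-connected} with Theorem~\ref{thm:sufficient} applied to a cleverly tailored degree bound $f$, taking the prescribed factor $F$ to be the trivial (edgeless) one. First I would invoke Theorem~\ref{thm:iso:tree-connected} with $\varepsilon=c=1$ on the hypothesis of the corollary; this immediately yields
\[
\Omega_m(G\setminus S)\le |S|+m \quad \text{for every } S\subseteq V(G).
\]
Setting $S=\emptyset$ gives $\Omega_m(G)\le m$, and since every graph satisfies $\Omega_m\ge m$ (shown earlier in Subsection~\ref{subsec:Basic tools}), equality holds and $G$ is $m$-tree-connected.

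Next I would define $f(u)=m+1$ and $f(v)=2m+1$ for every $v\in V(G)\setminus\{u\}$, take $X=V(G)$ and let $F$ be the trivial factor (so $d_F\equiv 0\le m$), and verify that the hypothesis of Theorem~\ref{thm:sufficient} holds, namely
\[
\Omega_m(G\setminus S)\le \sum_{v\in S}\bigl(f(v)-2m\bigr)+m+\Omega_m(G[S]) \quad \text{for all }S\subseteq V(G).
\]
For $S=\emptyset$ this reduces to $\Omega_m(G)\le m$, already established. For nonempty $S$, one has $\Omega_m(G[S])\ge m$, so it is enough to show $\Omega_m(G\setminus S)\le \sum_{v\in S}(f(v)-2m)+2m$. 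If $u\notin S$, the right-hand side equals $|S|+2m$, which comfortably dominates the bound $|S|+m$ from the first step. If $u\in S$, then
\[
\sum_{v\in S}\bigl(f(v)-2m\bigr)=(1-m)+(|S|-1)=|S|-m,
\]
so the right-hand side is exactly $|S|+m$, matching the bound from the first step with equality.

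Having checked both cases, Theorem~\ref{thm:sufficient} produces an $m$-tree-connected factor $H$ of $G$ (containing the trivial factor $F$) with $d_H(v)\le f(v)+\max\{0,d_F(v)-m\}=f(v)$ for every $v\in V(G)$, giving the desired bounds $d_H(v)\le 2m+1$ for all $v$ and $d_H(u)\le m+1$. The only delicate point, and the part that dictates the choice of $f$, is the case $u\in S$: here the inequality supplied by Theorem~\ref{thm:iso:tree-connected} holds with no slack, so assigning $u$ a smaller value than $2m+1$ but still larger than $m$ is exactly what the hypothesis of Theorem~\ref{thm:sufficient} can absorb. Everything else is straightforward bookkeeping.
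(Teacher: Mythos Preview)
Your proof is correct and follows essentially the same route as the paper: apply Theorem~\ref{thm:iso:tree-connected} with $\varepsilon=c=1$ to obtain $\Omega_m(G\setminus S)\le |S|+m$, then feed this into the sufficient condition with $f(u)=m+1$ and $f(v)=2m+1$ otherwise. The only cosmetic difference is that the paper invokes Corollary~\ref{cor:similar:toughness} (whose proof is exactly the ``$\Omega_m(G[S])\ge m$ for nonempty $S$'' step you wrote out), whereas you appeal to Theorem~\ref{thm:sufficient} directly and unpack that step inline.
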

\begin{proof}
{By applying Theorem~\ref{thm:iso:tree-connected} with setting $\varepsilon=1/m$ and $c =1$, we must have $ \Omega_m(G\setminus S) \le |S|+m$ for all $S\subseteq V(G)$. Hence the assertion follows from Corollary~\ref{cor:similar:toughness} with setting $f(u)=m+1$ and $f(v)=2m+1$ for each $v\in V(G)\setminus \{u\}$. Note that $G$ must automatically be $m$-tree-connected, because of $\Omega_m(G\setminus \emptyset)\le m$.
}\end{proof}
\begin{cor}\label{cor:tough-m2:verify}
{Every $\frac{1}{2}m(m+3)$-tough graph $G$ of order at least $2m$ has an $m$-tree-connected factor $H$ satisfying $\Delta(H)\le 2m+1$.
}\end{cor}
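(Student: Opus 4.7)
My plan is to deduce the corollary from Corollary~\ref{cor:tough+iso} (tough+iso), whose hypothesis $\omega(G\setminus S)+\tfrac{m+1}{2}\,iso(G\setminus S)\le\tfrac{|S|}{m}+1$ must be established for every $S\subseteq V(G)$. The inputs I will use are the $m^2$-toughness, which gives $\omega(G\setminus S)\le\max\{1,|S|/m^2\}$; the trivial inequality $iso(G\setminus S)\le\omega(G\setminus S)$; and the order bound $|V(G)|\ge 2m$.

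A case analysis on $G\setminus S$ carries the bulk of the verification. If $V(G\setminus S)=\emptyset$, or $G\setminus S$ is connected on at least two vertices, then $iso(G\setminus S)=0$ and the left-hand side is at most $1$. If $|V(G\setminus S)|=1$, both $\omega$ and $iso$ equal $1$, and using $|S|=|V(G)|-1\ge 2m-1$ the desired inequality $\tfrac{m+3}{2}\le\tfrac{|V(G)|-1}{m}+1$ follows from $|V(G)|\ge 2m$ for $m\le 2$. If $\omega(G\setminus S)\ge 2$, toughness forces $|S|\ge m^2\omega$, and combining with $iso\le\omega$ yields $\omega+\tfrac{m+1}{2}iso\le\tfrac{(m+3)|S|}{2m^2}$, which is at most $\tfrac{|S|}{m}+1$ whenever $(3-m)|S|\le 2m^2$, an inequality that is automatic for $m\ge 3$.

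To cover the residual small-order regime (where $|V(G)|<\tfrac{m^2+m+2}{2}\le m^2$), I would observe that $|S|/m^2<1$ for every proper $S\subseteq V(G)$, which gives $\omega(G\setminus S)\le 1$ and hence $G=K_{|V(G)|}$. For such a complete graph the factor is constructed directly: $K_{2m}$ itself is $m$-tree-connected with $\Delta=2m-1$, and for $n\ge 2m+1$ one takes $m$ edge-disjoint Hamilton cycles from the standard Hamilton decomposition of $K_n$ to obtain a $2m$-regular $m$-tree-connected factor.

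The hard part will be the low-$m$ subcases of $\omega(G\setminus S)\ge 2$, where the crude estimate above is insufficient. For $m=1$ I would bypass Corollary~\ref{cor:tough+iso} altogether: $1$-toughness gives $\Omega_1(G\setminus S)=\omega(G\setminus S)\le|S|+1\le|S|+2$, so Corollary~\ref{cor:similar:toughness} with $f\equiv 3$ yields the desired connected factor of maximum degree at most $3$. For $m=2$, tight examples like $K_{5,5,5,5,5}$ show that the bound $iso\le\omega$ must be refined; I plan to achieve this refinement by applying $m^2$-toughness to the augmented set $S\cup(V(G\setminus S)\setminus I)$, where $I$ is the set of isolated vertices of $G\setminus S$, which yields $iso\le|V(G)|/(m^2+1)$ whenever $iso\ge 2$ and supplies the slack needed to close the estimate.
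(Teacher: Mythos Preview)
Your treatment of $m\ge 3$ and $m=1$ is correct and matches the paper almost verbatim: for $m\ge 3$ the paper also verifies the hypothesis of Corollary~\ref{cor:tough+iso} via the same case split (large $|S|$ gives $\frac{m+3}{2}\omega\le\frac{|S|}{m}$; small $|S|$ forces $\omega=1$), and disposes of small-order graphs by observing they are complete. The paper does not isolate $m=1$, but your reduction to Corollary~\ref{cor:similar:toughness} with $f\equiv 3$ is fine.

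The gap is at $m=2$. Your own example $K_{5,5,5,5,5}$ shows that the hypothesis of Corollary~\ref{cor:tough+iso} \emph{fails outright} for a $4$-tough graph: with $S$ equal to four colour classes one gets $\omega=iso=5$, $|S|=20$, and
\[
\omega(G\setminus S)+\tfrac{3}{2}\,iso(G\setminus S)=\tfrac{25}{2}>11=\tfrac{|S|}{2}+1.
\]
Your proposed refinement $iso\le |V(G)|/(m^2+1)$ is \emph{tight} in this very example ($5=25/5$), so it cannot rescue the inequality; and sharpening via $iso\le\omega\le|S|/4$ only gives $\omega+\tfrac32 iso\le\tfrac{5|S|}{8}\le\tfrac{|S|}{2}+1$ when $|S|\le 8$, which again excludes $|S|=20$. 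Thus no bound on $iso$ alone can push the argument through Corollary~\ref{cor:tough+iso} for $m=2$.

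The paper handles $m=2$ by a genuinely different route: it refers to the proof of Theorem~\ref{thm:toughness:2,4-factors}, which does \emph{not} verify the $\omega+\tfrac32 iso$ condition but instead works directly with $\Omega_2(G\setminus S)$. It chooses $S$ maximizing $\Omega_2(G\setminus S)-|S|$ (Lemma~\ref{lem:optimized}), so that every component of $G\setminus S$ is either $2$-tree-connected or a path/cycle, and then exploits this structural information together with several auxiliary sets $S',S''$ to bound $\Omega_2(G\setminus S)$ against $|S|$. Your plan would need to be replaced by an argument of this type; the extra ingredient is the structure of the non-$2$-tree-connected components, not merely a count of isolated vertices.
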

\begin{proof}
{If $|V(G)|\le \frac{1}{2}m(m+3)$, then $G$ must be complete and the proof is straightforward.
Assume that $|V(G)|> \frac{1}{2}m(m+3)$.
Let $S$ be a subset of $V(G)$.
If $\omega(G\setminus S)\ge 2$, then 
$ \omega(G\setminus S)+\small {\frac{m+1}{2}}\, iso(G\setminus S) \le 
\frac{m+3}{2}\omega(G\setminus S)\le \frac{1}{m}|S|$.
If $\omega(G\setminus S)\le 1$ and $iso(G\setminus S) = 0$, then
 $\omega(G\setminus S)+\small {\frac{m+1}{2}}\, iso(G\setminus S)\le 1$.
If $\omega(G\setminus S)= iso(G\setminus S) = 1$, then $S=|V(G)|-1\ge \frac{1}{2}m(m+3)-1$
and so
$ \omega(G\setminus S)+\small {\frac{m+1}{2}}\, iso(G\setminus S) = 
\frac{m+3}{2} \le \frac{1}{m}|S|+1$.
Now, it is enough to apply Corollary~\ref{cor:tough+iso}.
}\end{proof}
%
%
%
\subsection{Sharpness: $(m-\varepsilon)$-tough graphs with no $m$-tree-connected $[m,2m+1]$-factors}
 Our aim in this subsection is to present a family of tough graphs with no $m$-tree-connected $[m,2m+1]$-factors and show that the coefficient $1/m$ of Corollary~\ref{cor:tough+iso} is sharp. For our purpose, we first need to establish the following simple lemma.
\begin{lem}\label{lem:m-epsilon}
{Let $m$ be an integer with $m\ge 2$. For every $\varepsilon \in (0,1)$, there is a simple graph $G$ satisfying
 $e_G(P)\ge (m-\varepsilon)(|P|-1)$ for every partition $P$ of $V(G)$, while $G$ is not $m$-tree-connected. 
}\end{lem}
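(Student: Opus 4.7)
The plan is to exhibit a specific simple graph $G$ on $n$ vertices for which every partition $P$ satisfies $e_G(P)/(|P|-1) \ge m - \tfrac{1}{n-1}$, with equality achieved only at the singleton partition. Choosing $n \ge 1+\lceil 1/\varepsilon\rceil$ makes the common lower bound at least $m-\varepsilon$, while equality at the singleton partition forces $|E(G)| = m(n-1)-1 < m(n-1)$, so $G$ fails to be $m$-tree-connected by Theorem~\ref{thm:Nash-Williams,Tutte}.

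Concretely, fix an integer $n \ge \max\{2m,\,1+\lceil 1/\varepsilon\rceil\}$ and construct $G$ on vertex set $V(G) = C \cup W \cup Q$, where $C = \{u_1,\ldots,u_m\}$, $W = \{w_1,\ldots,w_m\}$, and $|Q| = n-2m$. Let $G[C \cup W]$ be a copy of $K_{2m}$ with the single designated edge $w_1 w_2$ removed, and attach each vertex of $Q$ to every vertex of $C$ (and to no vertex of $W \cup Q$). A direct count gives $|E(G)| = \binom{2m}{2} - 1 + m(n-2m) = m(n-1) - 1$.

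The main structural step is to prove that $G$ is $m$-sparse and that no induced subgraph of $G$ on at least two vertices is $m$-tree-connected. For any $A \subseteq V(G)$, writing $a = |A\cap C|$, $b = |A\cap W|$, $c = |A\cap Q|$, one has
$$e_G(A) = \binom{a}{2} + a(b+c) + \binom{b}{2} - \delta_A,$$
where $\delta_A = 1$ if $\{w_1,w_2\} \subseteq A$ and $\delta_A = 0$ otherwise. A short case analysis on $(a,b) \in \{0,\ldots,m\}^2$ shows that the deficit $m(|A|-1) - e_G(A)$ is at least $1$ for every $A$ with $|A| \ge 2$, attaining equality precisely when $a = b = m$ (for any $c$); this identifies the removed edge $w_1 w_2$ as the unique source of the single unit of slack.

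Granted this, for any partition $P$ of $V(G)$, if $P$ has at least one non-singleton part then summing the bound $e_G(A) \le m(|A|-1) - 1$ over the $p_0 \ge 1$ non-singleton parts (and using $e_G(A) = 0$ on singletons) yields
$$e_G(P) = |E(G)| - \sum_{A \in P} e_G(A) \ge m(n-1) - 1 - \bigl(m(n-|P|) - p_0\bigr) = m(|P|-1) - 1 + p_0 \ge m(|P|-1),$$
so $e_G(P) \ge (m-\varepsilon)(|P|-1)$. For the singleton partition $e_G(P) = m(n-1) - 1 = \bigl(m - \tfrac{1}{n-1}\bigr)(n-1) \ge (m-\varepsilon)(n-1)$ by the choice of $n$. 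The principal obstacle is the case analysis for the deficit, i.e., verifying uniformly that $m(|A|-1) - e_G(A) \ge 1$ whenever $|A| \ge 2$, with equality only at the extremal configuration $a = b = m$: this is what pins the single unit of slack to the singleton partition and prevents any coarser partition from witnessing the failure of $m$-tree-connectivity.
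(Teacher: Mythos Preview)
Your construction is correct and the argument goes through (for $m \ge 2$, which is the only meaningful range here). The deficit computation you outline reduces, writing $s = a+b$, to
\[
m(|A|-1) - e_G(A) = \tfrac{(s-1)(2m-s)}{2} + (m-a)c + \delta_A;
\]
since $(s-1)(2m-s)$ is always even and nonnegative for $1 \le s \le 2m$, and is at least $2$ for $2 \le s \le 2m-1$, the residual cases $s \in \{0,1,2m\}$ are immediate. One small inaccuracy: equality in the deficit bound does not occur \emph{only} when $a = b = m$ (for instance, when $m = 2$ the set $A = C \cup \{w_1\}$ also has deficit exactly~$1$), but your argument never actually uses the equality characterisation, only the inequality, so this does no harm.

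The paper takes a rather different route. It starts from an essentially $(2m+2)$-edge-connected $2m$-regular graph $G_0$ of order exceeding $1/\varepsilon$ and deletes a set $M$ of $m+1$ edges not all incident to a common vertex; the verification is then by cases on the partition itself rather than on subsets. If $|P| = 2$ one uses $2m$-edge-connectivity; if $|P| \ge 3$ with some non-singleton part, the essential $(2m+2)$-edge-connectivity forces $e_G(P) \ge m(|V(G)|-1)$; only the all-singleton partition gives the borderline value $m(n-1)-1$. Your approach is more elementary and fully explicit --- it avoids any appeal to the existence of essentially highly edge-connected regular graphs --- at the cost of the short quadratic case analysis on $(a,b)$. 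The paper's approach, by contrast, offloads the structural work onto a known graph class and keeps the partition argument to three lines.
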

\begin{proof}
{Let $G_0$ be an essentially $(4m-2)$-edge-connected $2m$-regular simple graph $G_0$ of order at least $ 1/\varepsilon +1$; 
a graph is called {\it essentially $\lambda$-edge-connected},
 if all edges of any edge cut of size strictly less than $\lambda$ are incident to a common vertex.
(For example, the graph with vertices $v_1, \ldots, v_n$ and edges $v_iv_{i+j}$ (modulo $n$) where $j\in \{1,\ldots, m\}$ and $n$ is a sufficiently large integer). Since $m\ge 2$, the graph $G_0$ is essentially $(2m+2)$-edge-connected.
Take $M$ to be a set of $m+1$ edges of $G_0$ which are not incident to a common vertex. 
We claim that $G=G_0\setminus M$ is the desired graph.
Since $|E(G)|=m(|V(G)|-1)-1$, the graph $G$ is not $m$-tree-connected. 
Let $P$ be a partition of $V(G)$ of size at least two.
First, assume that $|P|= 2$.
If there exists a set $X\in P$ satisfying $|X| = 1$, then by the assumption, $e_G(P)\ge 2m-(|M|-1)\ge m$.
Otherwise, $e_G(P)\ge 2m+2-|M|> m$.
Thus, in both cases, $e_G(P)\ge m(|P|-1)$.
Now, assume that $|P|\ge 3$.
If there exists a set $X\in P$ satisfying $|X| = 2$, then $d_{G_0}(X)\ge 2m+2$ 
and hence $e_G(P)\ge e_{G_0}(P)-|M|\ge 
\sum_{A\in P}d_G(A)/2-|M| \ge m|P|+1-(m+1)= m(|P|-1)$.
Otherwise, $|P|=|V(G)|$ and hence
$e_G(P)=|E(G)|=m(|V(G)|-1)-1\ge (m-\varepsilon)(|P|-1)$.
This completes the proof.
}\end{proof}
The following theorem shows that the coefficient $1/m$ in Corollary~\ref{cor:tough+iso} is sharp and cannot be improved 
even by arbitrarily increasing the coefficient of $iso(G\setminus S)$ or the upper bound on the maximum degree.
\def\Ks {\text{$R_s$}}
\begin{thm}\label{thm:exam:iso:tree-connected}
{Let $m$ an integer with $m\ge 2$, and let $\Delta$, $k$ and $n$ be arbitrary positive integers. 
If $\varepsilon\in (0,1)$, then there exist infinitely many $k$-connected simple graphs $G$ having no $m$-tree-connected factor with maximum degree at most $\Delta$, while for all $S\subseteq V(G)$ satisfying $|S|\ge k$, 
$$ \omega(G\setminus S)+n\, iso(G\setminus S) \le (\frac{1}{m}+\varepsilon)(|S|-k)+1.$$
}\end{thm}
\begin{proof}
{Let $s$ and $p$ be two arbitrary integers satisfying $s\ge k$ and $p > \Delta s$. 
By Lemma~\ref{lem:m-epsilon}, there exists a non-$m$-tree-connected simple graph $H'$ satisfying
 $e_{H'}(P)\ge m'(|P|-1)$ for every partition $P$ of $V(G)$, where $m'=1 / (1/m+\varepsilon) < m$.
We replace each vertex of $H'$
by a copy of the complete graph $K_{n_0}$ such that every vertex in the new graph is adjacent to at most one edge of $H'$, where 
 $n_0$ is a fixed positive integer satisfying $n_0\ge m' (n+1)|V(H')|+1$. 
We call the resulting graph $H$.
For every integer $i$ with $1\le i\le p$, let $H_i$ be a copy of the graph $H$ and
 let $U_i$ be the set of all vertices of a complete subgraph of $H_i$ corresponding to $K_{n_0}$.
First, add all possible edges between all vertices of $U_i$ and $U_j$, where $i, j\in \{1,\ldots, p\}$. 
Next, add a new complete graph $\Ks$ of order $s$ and join all its vertices to all vertices of every graph $H_i$.
We call the resulting graph $G$; see Figure~\ref{Fig:partition-connected}.

Obviously, $G$ must be $s$-connected. 
Suppose, to the contrary, that $G$ has an $m$-tree-connected factor $T$ with maximum degree at most $\Delta$.
According to the construction of $G$, since $H_i$ is not $m$-tree-connected, 
there must be at least one edge of $T$ having one end in $V(\Ks)$ the other one in $V(H_i)\setminus U_i$. 
Thus there are at least $p$ edges of $T$ with exactly one end in $V(\Ks)$.
Therefore, $T$ contains a vertex in $V(\Ks)$ with degree at least $p/s> \Delta$ which is a contradiction.
\begin{figure}[h]
{\centering
\includegraphics[scale=.95]{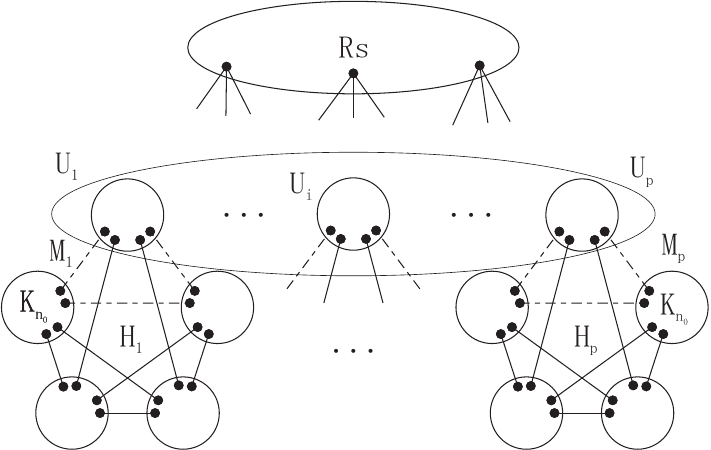}
\caption{Graph structure for the special case $m=2$.}
\label{Fig:partition-connected}
}\end{figure}

Let $S\subseteq V(G)$ with $|S|\ge k$. 
If $\omega(G\setminus S)\le 1$ and $iso(G)= 0$, then
$\omega(G\setminus S)+n\, iso(G\setminus S)=1 \le \frac{1}{m'}(|S|-k)+1$.
If $\omega(G\setminus S)=iso(G)= 1$, then $|S|= |V(G)|-1\ge n_0+k$ and so
$\omega(G\setminus S)+n\, iso(G\setminus S)=(n+1) \le \frac{1}{m'}(|S|-k)$.
 We may assume that $\omega(G\setminus S)\ge 2$. 
Obviously, $S$ must contain all vertices of $\Ks$.
For every integer $i$ with $1 \le i \le p$, let $S_i=S\cap V(H_i)$, 
Thus $|S|=s+\sum_{1\le i\le p}|S_i|$.
Define $\omega_i$ to be the number of components of $H_i\setminus S_i$ having no vertices of $U_i$.
According to the construction of $G$, we must have 
$\omega(G\setminus S)\le \sum_{1\le i\le p}\omega_i+1$.
In addition, if $|S_i|\le n_0-2$ then $ iso(H_i\setminus S_i) =0$. 
Let $P_i$ be the partition of $V(H')$ obtained from the components of $H_i\setminus S_i$.
According to the construction of $H'$, we must have
$$|S_i| \ge e_{H'}(P_i) \ge m'(|P_i|-1) = m' (\omega(H_i\setminus S_i)-1)=m'\omega_i,$$
which implies that 
$\omega_i +n\, iso(H_i\setminus S_i) =\omega_i \le \frac{1}{m'}|S_i|.$
If $|S_i|\ge n_0-1$, then we must have 
 $|S_i|\ge m' (n+1)|V(H_i)|$, which again implies that
$\omega_i +n\, iso(H_i\setminus S) \le (n+1)|V(H)|\le \frac{1}{m'}|S_i|$.
Therefore, one can conclude that
$$
\frac{|S|-k}{\omega(G\setminus S)-1+n\, iso(G\setminus S)}\ge 
\frac{\sum_{1\le i\le p}|S_i|}{\sum_{1\le i\le p}(\omega_i+n\, iso(H_i\setminus S_i))}
\ge m'.$$
These inequalities complete the proof.
}\end{proof}
%
%
\subsection{An application to highly connected star-free simple graphs}

Matthews and Summer (1984)~\cite{Matthews-Summer-1984} showed that every $k$-connected $K_{1,3}$-free 
simple graph is $\frac{k}{2}$-tough. This result is generalized to $k$-connected $K_{1,n}$-free simple graphs 
 by Jackson and Wormald~(1990)~\cite{Jackson-Wormald-1990} and Chen and Schelp (1995)~\cite{Chen-Schelp-1995} independently. 
In the following lemma, we provide a stronger version for their result for $K_{1,f}$-free simple graphs.
\begin{lem}\label{lem:tough+iso:star-free graphs}
{Let $G$ be a simple graph and let $f$ be an integer-valued function on $V(G)$ with $f\ge 2$.
 If $G$ is $k$-connected and $K_{1,f}$-free, then for every $S\subseteq V(G)$, 
$$ \omega(G\setminus S)+\sum _{v\in I(G\setminus S)}(\frac{1}{k}d_G(v)-1) \le \max\{1, \frac{1}{k}\sum_{v\in S}(f(v)-1)\}.$$
}\end{lem}
\begin{proof}
{We repeat the proof of Theorem 4.2 in~\cite{Jackson-Wormald-1990} with some modifications. 
Let $S$ be a subset of $V(G)$.
Since $G$ is $k$-connected, every component of $G\setminus S$
is joined to at least $k$ vertices in $S$ when $ \omega(G\setminus S) \ge 2$. 
In addition, every component of $G\setminus S$ consisting of a single vertex $v$ is joined to at least $d_G(v)$ vertices in $S$.
Since $G$ is $K_{1,f}$-free, every vertex of $S$ is joined to at most $f(v)-1$ components of $G\setminus S$.
Thus $k(\omega(G\setminus S)-iso(G\setminus S))+\sum _{v\in I(G\setminus S)}d_G(v)\le \sum_{v\in S}(f(v)-1)$ provided that 
 $ \omega(G\setminus S) \ge 2$ or $ \omega(G\setminus S) = iso (G\setminus S) = 1$.
Therefore, for all vertex sets $S$, $\omega(G\setminus S)+\sum _{v\in I(G\setminus S)}(\frac{1}{k}d_G(v)-1) \le 
\max\{1, \frac{1}{k}\sum_{v\in S}(f(v)-1)\}$ regardless of $\omega(G\setminus S) \le 1$ and $iso (G\setminus S)=0$ or not. Hence the assertion holds.
}\end{proof}
The following result is an application of Lemma~\ref{lem:tough+iso:star-free graphs} and Corollary~\ref{cor:tough+iso}. 
\begin{thm}\label{thm:star-free-graphs}
{Every $m(n-1)$-connected $K_{1,n}$-free simple graph $G$ with $n\ge 3$ admits an $m$-tree-connected factor $H$ satisfying $\Delta(H) \le 2m+1$, provided that $\delta(G)\ge \frac{1}{2}m(m+3)(n-1)$.
}\end{thm}
\begin{proof}
{By Lemma~\ref{lem:tough+iso:star-free graphs}, for every $S\subseteq V(G)$, we must have 
$$ \omega(G\setminus S)+(\frac{\delta(G)}{m(n-1)}-1) iso(G\setminus S)\le 
 \omega(G\setminus S)+\frac{m+1}{2}iso(G\setminus S)\le \max\{1, \frac{n-1}{m(n-1)}|S|\} \le \frac{1}{m}|S|+1.$$
Thus by Corollary~\ref{cor:tough+iso}, the graph $G$ has an $m$-tree-connected factor $H$ satisfying the theorem.
}\end{proof}

\section{Acknowledgements}
The author would like to thank anonymous referees for their helpful comments.
%
%
%
%
%
%
%
%
%
%
%


\begin{thebibliography}{10}

\bibitem{AHO}
S. Akbari, M. Hasanvand, and K. Ozeki, Highly edge-connected factors using given
lists on degrees, J. Graph Theory 90 (2019) 150--159.

\bibitem{BangJensen-Thomasse-Yeo-2003}
 J.~Bang-Jensen, S.~Thomass{\'e}, and A.~Yeo, Small degree out-branchings, J. Graph Theory 42 (2003)~297--307.

\bibitem{Bensmail-Harutyunyan-Le-Thomasse-2019}
J. Bensmail, A. Harutyunyan, T.-N. Le, and S. Thomass\'e,
Edge-partitioning a graph into paths: beyond the Barát-Thomassen conjecture, Combinatorica 39 (2019) 239--263.


\bibitem{Catlin-1989}
P.A. Catlin, Double cycle covers and the {P}etersen graph, J. Graph Theory 13 (1989)~465--483.


\bibitem{Chen-Schelp-1995}
G. Chen and R.H. Schelp, Hamiltonicity for $K_{1,r}$-free graphs, J. Graph Theory 20 (1995) 423--439.

\bibitem{Chvatal-1973}
 V.~Chv\'atal, Tough graphs and {H}amiltonian circuits, Discrete Math. 5 (1973)~215--228.

\bibitem{Edmonds-1973}
 J.~Edmonds, Edge-disjoint branchings, Combinatorial Algorithms B.~Rustin, (Editors), Algorithmics Press, New York (1973) 91--96.

\bibitem{Edmonds-1970}
 J.~Edmonds, 
 Submodular functions, matroids, and certain polyhedra,
 in Combinatorial {S}tructures and their {A}pplications ({P}roc. {C}algary {I}nternat. {C}onf., {C}algary,
 {A}lta., 1969), Gordon and Breach, New York, 1970, pp.~69--87.

\bibitem{Ellingham-Nam-Voss-2002}
 M.N. Ellingham, Y.~Nam, and H.-J. Voss, Connected {$(g,f)$}-factors, J. Graph Theory 39 (2002)~62--75.

\bibitem{Ellingham-Zha-2000}
 M.N. Ellingham and X.~Zha, Toughness, trees, and walks, J. Graph Theory 33 (2000)~125--137.

\bibitem{Enomoto-Ohnishi-Ota-2011}
 H.~Enomoto, Y.~Ohnishi, and K.~Ota, Spanning trees with bounded total excess, Ars Combin. 102 (2011)~289--295.

\bibitem{SpanningTreeEulerian-2015}
 M.~Hasanvand, Spanning trees and spanning {E}ulerian subgraphs with small degrees, Discrete Math. 338 (2015)~1317--1321.

\bibitem{ClosedWalks}
 M.~Hasanvand, Spanning trees and spanning closed walks with small degrees, Discrete Math. 345 (2022) 112998.

\bibitem{Isolated}
 M.~Hasanvand, Factors and connected factors in tough graphs with high isolated toughness, arXiv:1812.11640v3.

\bibitem{Jackson-Wormald-1990}
B.~Jackson and N.C. Wormald, {$k$}-walks of graphs, Australas. J. Combin. 2 (1990)~135--146.

\bibitem{MR1621287}
 Z.~Liu and B.~Xu, On low bound of degree sequences of spanning trees
 in {$K$}-edge-connected graphs, J. Graph Theory 28 (1998)~87--95.


\bibitem{Mader-1978}
W. Mader, A reduction method for edge-connectivity in graphs, Ann Discrete Math. 3 (1978) 145--164

\bibitem{Nagamochi-Ibaraki-1992}
H. Nagamochi and T. Ibaraki, A linear time algorithm for finding a sparse $k$-connected spanning subgraph of a $k$-connected graph. Algorithmica 7 (1992) 583--596. 

\bibitem{MR0118684}
 C.St.J.A. Nash-Williams, On orientations, connectivity and
 odd-vertex-pairings in finite graphs, Canad. J. Math. 12 (1960)~555--567.

\bibitem{Nash-Williams-1961} 
C.St.J.A. Nash-Williams, Edge-disjoint spanning trees of finite graphs, J. London Math. Soc. 36 (1961)~445--450.

\bibitem{Nishizeki-Poljak-1994}
T. Nishizeki and S. Poljak, $k$-Connectivity and decomposition of graphs into forests, Discrete Appl. Math. 55 (1994) 295--301. 

\bibitem{Matthews-Summer-1984}
M.M. Matthews and D.P. Summer, Hamiltonian results in $K_{1,3}$-free 
graphs, J. Graph Theory 8 (1984) 139--146. 

\bibitem{Ok-Thomassen-2017}
 S.~Ok and C.~Thomassen, On the minimum number of spanning trees in {$k$}-edge-connected graphs, J. Graph Theory 84 (2017)~286--296.

\bibitem{Shu-Zhang-Zhang-2012}
Shu, Zhang, and Zhang, Flows and parity subgraphs of graphs with large odd-edge-connectivity, 
J. Combin. Theory Ser. B 102 (2012) 839--851.

\bibitem{Thomassen-2008-P3}
 C.~Thomassen, Decompositions of highly connected graphs into paths of length 3, J. Graph Theory 58 (2008)~286--292.

\bibitem{Thomassen-2013-fixedlength}
 C.~Thomassen, Decomposing graphs into paths of fixed length, Combinatorica 33 (2013)~97--123.

\bibitem{Tutte-1961}
W.T. Tutte, On the problem of decomposing a graph into {$n$} connected factors, J. London Math. Soc. 36 (1961)~221--230.

\bibitem{Win-1989}
 S.~Win, On a connection between the existence of {$k$}-trees and the toughness of a graph, Graphs Combin. 5 (1989)~201--205.

\bibitem{Yao-Li-Lai-2010}
 X.~Yao, X.~Li, and H.-J. Lai, Degree conditions for group connectivity, Discrete Math. 310 (2010)~1050--1058.

\end{thebibliography}
\end{document}